\numberwithin{equation}{section}
\newcommand{\ud}[0]{\,\mathrm{d}}
\newcommand{\floor}[1]{\lfloor #1 \rfloor}
\newcommand{\ave}[1]{\langle #1\rangle}
\renewcommand{\Re}[0]{\operatorname{Re}}
\newcommand{\Rd}{\mathbb{R}^d}
\newcommand{\f}{\frac}
\newcommand{\lesi}{\lesssim}
\theoremstyle{plain}
\newtheorem{thm}{Theorem}[section]
\newtheorem{lem}[thm]{Lemma}
\newtheorem{prop}[thm]{Proposition}
\newtheorem{cor}[thm]{Corollary}
\theoremstyle{definition}
\newtheorem{defn}[thm]{Definition}
\theoremstyle{remark}
\newtheorem{rem}[thm]{Remark}
\title[fractional operators beyond Calder\'on--Zygmund theory]{New Sparse Domination and Weighted Estimates for Fractional Operators Beyond Calder\'on-Zygmund Theory}
\author{The Anh Bui}
\author{Linfei Zheng}
\address{School of Mathematical and Physical Sciences, Macquarie University, NSW 2109, Australia}
\email{the.bui@mq.edu.au}
\address{Center for Applied Mathematics, Tianjin University, Weijin Road 92, 300072 Tianjin, China}
\email{linfei\_zheng@tju.edu.cn}
\subjclass[2020]{42B20, 42B25}
\keywords{fractional power, commutator, sparse domination, Bloom weighted estimate}
\begin{document}

\allowdisplaybreaks

\begin{abstract}
Let $L$ be a closed, densely defined operator on $L^2(\mathbb{R}^n)$ satisfying suitable $L^p-L^q$ off-diagonal estimates of order $\kappa > 0$. This paper aims to investigate the two-weight estimate and the Bloom weighted estimate for the fractional operator $L^{-\alpha/\kappa}$ with $0 < \alpha < n$ through the method of sparse domination. Our assumptions on the operators are minimal, and our result applies to a wide range of differential operators. As a byproduct, we also establish a new sparse domination criterion for a general class of fractional operators, including the classical fractional integral.
\end{abstract}

\maketitle

\section{Introduction and main results}

\subsection{Statement of results}
\par Over the past few decades, there has been a great focus in harmonic analysis on addressing weighted inequalities concerning classical operators such as the Hilbert and Riesz transforms, along with other singular integral operators. A particularly challenging problem has been understanding the precise dependence of norm estimates for a given operator in relation to the "norm" of the corresponding weights. The first result is due to S. Buckley \cite{Bu} where the sharp weighted estimate for the Hardy--Littlewood operator $M$ was given:
\begin{equation}\label{Buckley_thm}
	\|M\|_{L^p(w)} \leq C_{p} \,[w]_{A_p}^{\frac{1}{p-1}}, \ \ 1<p<\infty,
\end{equation}
where 
\begin{equation*}
	[w]_{A_p} := \sup_{Q: \,\text{cube in}\, \mathbb{R}^n} \left(\frac{1}{|Q|}\int_Q w(x) \ud x\right) \left(\frac{1}{|Q|}\int_Q w(x)^{-\frac{1}{p-1}}\ud x\right)^{p-1} < \infty, \hspace{1em} p>1,
\end{equation*}
and $C_p$ is a dimensional constant that also depends on $p$, but not on $w$. We say that the estimate in  \eqref{Buckley_thm} is sharp in the sense that the exponent $1/(p-1)$ cannot be replaced by a smaller one.

The sharp weighted estimates for singular integral operators are  much more difficult. The sharp  bounds for the Hilbert and Riesz transforms were investigated  by Petermichl \cite{Pet,Pet2}, while the long standing $A_2$ conjecture or the sharp weighted estimates for the Calder\'on--Zygmund operators were addressed in \cite{Hyt1}. Subsequently, research in this area progressed rapidly, with developments in sharp weighted estimates and related problems. For a more comprehensive background, we refer to \cite{Hyt1, Hyt2, HLP, L, LN} and the references therein.

An effective method in weighted estimates is to use sparse operators to dominate singular integral operators. This idea was originally proposed by Lerner \cite{L4}, in that paper, he obtained the following estimate: 
\begin{equation}{\label{Lerner's sparse domination}}
\|{T_{\#}}f\|_{X} \leqslant c(n,\operatorname{T})\sup_{\mathscr{D},\mathcal{S}}\|\mathcal{A}_{\mathscr{D},\mathcal{S}}|f|\|_{X},    
\end{equation}
where $X$ is a Bananch space, $T_{\#}$ is the maximal truncated Calder\'on--Zygmund operator, and $\mathcal{A}_{\mathscr{D},\mathcal{S}}$ is the sparse operator defined by 
$$\mathcal{A}_{\mathscr{D},\mathcal{S}}f:=\sum\limits_{Q \in \mathcal{S}}\ave{f}_{Q}\chi_{Q}.$$ 
In \cite{L}, Lerner provided a new proof for estimate (\ref{Lerner's sparse domination}) and obtained an alternative proof for $A_2$ conjecture. 
Later on, the method of sparse domination received a lot of attention, many classical operators have been proven to be dominated by sparse operators. For example, Lacey \cite{Lacey1} established a pointwise sparse domination for Calder\'on--Zygmund operators satisfying Dini-type condition.

In addition to weighted estimates, sparse domination method also has many other applications, one of which is to prove the Bloom weighted boundedness for commutators. The study of Bloom weighted estimate for commutators originated from Bloom \cite{BL}, where he obtained a characterization of weighted BMO space through the two-weight boundedness for commutators of Hilbert transform $[b,\operatorname{H}]f:=b\operatorname{H}(f)-\operatorname{H}(bf).$ Specifically, it was proved that
\begin{equation}{\label{Bloom's result}}
\|b\|_{{BMO}_{\nu}} \lesssim \|[b,\operatorname{H}]\|_{L^{p}(\mu) \rightarrow L^{p}(\lambda)} \lesssim \|b\|_{{BMO}_{\nu}},
\end{equation}
where $\mu,\lambda \in A_{p}, \nu=\mu^{\frac{1}{p}}\lambda^{-\frac{1}{p}},$ and
$$\|b\|_{{BMO}_{\nu}}:=\sup_{Q: \,\text{cube in}\, \mathbb{R}^n}\frac{1}{\nu(Q)}\int_Q|b-\ave{b}_Q| .$$

In recent years, the upper bound estimate in (\ref{Bloom's result}) has been extended to general Calder\'on--Zygmund operators, as shown in \cite{ST, HLB, LOR, HOJ}. We point out that the main strategies used in \cite{LOR, HOJ} are exactly the sparse domination mentioned above. Once the Bloom weighted estimate for commutators is obtained, a natural question is whether such estimate also holds for the $k$-iterative commutators of Calder\'on--Zygmund operators $ {T}$, which refers to   
$${T}_{b}^{m}f=[b, {T}_{b}^{m-1}]f,\qquad {T}_{b}^{1}f=[b,T]f.$$
This problem was initially studied under the assumption of $b \in BMO(\mathbb{R}^n) \cap BMO_{\nu}(\mathbb{R}^n)$ in \cite{HW,Hyt3}, and later \cite{LOR2} relax it to $b \in BMO_{\nu^{1/k}}(\mathbb{R}^n).$ In a recent work \cite{LLO}, Lerner, Lorist and Ombrosi provided quantitative Bloom weighted estimates for a class of bilinear sparse forms related to iterative commutators, futhermore, they obtained Bloom weighted estimates for iterative commutators of operators under certain diagonal boundedness assumptions.

This paper aims to establish quantitative two-weight estimates for a generalized fractional integral and Bloom weighted estimates for its iterative commutators through sparse domination, motivated by the fractional powers of abstract operators extending beyond the Calder\'on--Zygmund theory. Before coming to the details, we provide a review of some classical results. Let $0 < \alpha < n$, the fractional integral operator $I_\alpha$ on $\mathbb{R}^n$ is defined by
\begin{equation}\label{eq-fractional integral}
	I_\alpha f(x) := \int_{\mathbb{R}^n} \frac{f(y)}{|x - y|^{n-\alpha}} \, \ud y.
\end{equation}
It is well known that the fractional operator $I_\alpha$ is bounded from $L^p(\mathbb R^n)$ into $L^q(\mathbb R^n)$ provided that $1 < p < \frac{n}{\alpha}$ and
\[
\frac{1}{q} = \frac{1}{p} - \frac{\alpha}{n}.
\]

Regarding the  weighted estimates for the fractional integral $I_\alpha$, Muckenhoupt and Wheeden \cite{MW} introduced the  a new class of weights denoted by  $A_{p,q}$ for $1 < p < q < \infty$.  Recall that for $1 < p < q < \infty$, the weight $w$, which  is a non-negative locally integrable function, is in the class $A_{p,q}$ if
\[
[w]_{A_{p,q}} := \sup_Q \left( \frac{1}{|Q|} \int_Q w^q \right) \left( \frac{1}{|Q|} \int_Q w^{-p'} \right)^{q/p'} < \infty,
\]
where the supremum is taken over all balls/cubes $Q$ in $\mathbb R^n$. It is important to note that if $w \in A_{p,q}$ then $w^q \in A_{1+q/p'}$ with $[w^q]_{1+q/p'} = [w]_{A_{p,q}}$ and $w^{-p'} \in A_{1+p'/q}$ with $[w^{-p'}]_{1+p'/q} = [w]_{A_{p,q}}^{p'/q}$ where $A_s$ denotes the classical Muckenhoupt class of weights. Then it was proved in \cite{MW} that for $0 < \alpha < n$, $1 < p < \frac{n}{\alpha}$, and $\frac{1}{q} = \frac{1}{p} - \frac{\alpha}{n}$, we have
\[
\|I_\alpha f\|_{L^q(w^q)} \leq C(n,\alpha,[w]_{A_{p,q}}) \|f\|_{L^p(w^p)}, \ \ \ \ w \in A_{p,q},
\]
where the constant $C$ depends on $n, \alpha$ and $[w]_{A_{p,q}}$. Note that this result does not reflect the quantitative dependence of the  operator norm in terms of the relevant constant involving $[w]_{A_{p,q}}$. The sharp $A_{p,q}$ weighted estimate  was addressed by Lacey et. al. \cite{Lacey}. It was proved in \cite{Lacey} that for $0 < \alpha < n$, $1 < p < \frac{n}{\alpha}$, and $\frac{1}{q} = \frac{1}{p} - \frac{\alpha}{n}$ we have
\begin{equation}{\label{Lacey's estimate}}
   \|I_\alpha f\|_{L^q(w^q)} \leq C_{n,\alpha}[w]_{A_{p,q}}^{(1-\frac{\alpha}{n})\max\{1,\frac{p'}{q}\}}  \|f\|_{L^p(w^p)}, \ \ \ w\in A_{p,q} 
\end{equation}
and the estimate is sharp in the sense that the inequality does not hold if we replace the exponent of the $A_{p,q}$ constant by a smaller one. The result has been extended to a class of fractional operator with $L^{\alpha, r'}$-H\"ormander condition, as detailed in \cite{IRV}. This condition can be viewed as an $L^{r'}$-H\"ormander condition for the Calder\'on-Zygmund operators. Note that the approaches in \cite{Lacey} and \cite{IRV} rely heavily on the pointwise estimates of the kernel of the fractional integral operators.

In \cite{AM1}, the weighted norm inequalities for fractional powers of second order elliptic operator in divergence form were investigated. It is interesting to emphasize that the fractional operators considered in \cite{AM1} can be viewed as non-integral operators as we do not know any information on the pointwise estimates of the heat kernel of the second order elliptic operator in the general case and hence these classes of fractional operators beyond the Calder\'on--Zygmund theory. To the best of our knowledge, the sharp weighted estimates and the sparse domination for such operators have not been studied yet. Motivated by these research papers, in this paper we aim to conduct further research on the weighted estimates of a class of fractional operators beyond the Calder\'on-Zygmund theory. 

Before coming to the details, we would like to set up our framework for commutators systematically. Given a linear operator $T$ and $b \in L_{\text{loc}}^{1}(\mathbb{R}^n)$, we define $$T_{b}^{1}(f):=bT(f)-T(bf)$$ as the first order commutator. For $m \in \mathbb{N}, m \geqslant 2$, we inductively define $$T_{b}^{m}(f):=bT_{b}^{m-1}(f)-T_{b}^{m-1}(bf)$$ as the higher order commutator. It is easy to verify that 
\begin{equation}\label{commutator}
	T_{b}^{m}(f)(x)=T((b(x)-b(\cdot))^mf)(x),\quad x\in \mathbb{R}^n,\,m \geqslant 1.
\end{equation}
Suppose $T$ is a general operator, we use formula (\ref{commutator}) as the definition of iterative commutators. Moreover, we define $T_{b}^{0}(f)(x):=T(f)(x)$.

In this section, we will consider a class of operators satisfying the assumptions of Theorem \ref{Sparse Domination}. In what follows, by an operator $L$ being of type $\omega$ on $L^2(\mathbb R^n)$ we mean that its spectrum is contained in the sector 
\[
S = \{ \lambda \in \mathbb{C} : |\arg(\lambda)| \leq \omega \}
\]
and its resolvent satisfies a bound of the type 
\[
\| (L - \xi I)^{-1} \| \leq \frac{C_\mu}{|\xi|}
\]
for all $\xi$ with $\arg \xi\ge \mu$ with $\mu>\omega$. See for example \cite{Mc}.\\

In what follows, for a subset $E\subset \mathbb R^n$ with finite measure and a measurable function $f$ we denote
$$
\fint_E f(x)\,{\rm d}x=\f{1}{|E|}\int_E f(x)\,{\rm d}x.
$$We now  consider an operator $L$ satisfying the following conditions.

\begin{enumerate}
	\item[(A1)] $L$ is a closed densely-defined operator of type $\omega$ in $L^2(\mathbb R^n)$ with
	$0\leq \omega<\pi/2$, hence, $L$ generates a holomorphic semigroup
	$e^{zL}$, $|\arg(z)|<\pi/2-\omega$. We also assume that $L$ has a bounded $H_\infty$--functional calculus on $L^2(\mathbb R^n)$.
	
	\item[(A2)] There exist $1\le p_0 < q_0\le \infty$, $\epsilon>0$ and $\kappa>0$ such that for $p_0\le p\le q\le q_0$, $t>0$ and $k\in \mathbb N:=\{0,1,2,\ldots\}$ we have
	\begin{equation}\label{Lpq estimate}
		\|(tL)^ke^{-tL}\|_{p\to q} \lesi_{k,p,q} t^{-\f{n}{\kappa}(\f{1}{p}-\f{1}{q})} 
	\end{equation}
	and
	\begin{equation}\label{eq2-Tt}
		\begin{aligned}
			\Big(\fint_{Q}|(tL)^ke^{-tL}f|^q\Big)^{1/q}&\lesi_{k,p,q} \max\Big\{\Big(\f{3^j\ell(Q)}{t^{1/\kappa}}\Big)^n,\Big(\f{3^j\ell(Q)}{t^{1/\kappa}}\Big)^{n/p} \Big\} \\
   	&\hskip 1cm \times\Big(1+\f{t^{1/\kappa}}{\ell(Q)}\Big)^{n/q}\Big(1+\f{3^j\ell(Q)}{t^{1/\kappa}}\Big)^{-n-\epsilon}
		 \Big(\fint_{S_j(Q)}|f|^p\Big)^{1/p}
		\end{aligned}
	\end{equation}
	for all cubes $Q$ and $f\in L^p(S_j(Q))$ with $j\ge 2$, where $S_j(Q)=3^{j}Q\backslash 3^{j-1}Q$ as $j\ge 1$ and $S_0(Q)=Q$.
\end{enumerate}

\begin{rem}
	Recall that a family $\{S_t\}_{t>0}$ is said to satisfy the $L^p$--$L^q$ off-diagonal estimate (of order $\kappa>1$) for some $p,q \in [1, \infty]$ with $p \leq q$ if there exist constants $C, c> 0$ such that for all closed sets $E, F \subset \mathbb{R}^n$, $t > 0$, and $f \in L^2(\mathbb{R}^n) \cap L^p(\mathbb{R}^n)$ supported in $E$, the following estimate holds:
	\begin{equation}\label{eq-defn Lpq estimates}
		\|S_tf\|_{L^q(F)} \leq C t^{  \f{n}{\kappa}\left(\frac{1}{q} - \frac{1}{p}\right) } e^{-\left(\frac{d(E,F)}{ct^{1/\kappa}}\right)^{\f{\kappa}{\kappa-1}}} \|f\|_{L^p(E)}.
	\end{equation}
	For further details and properties of  the $L^p$--$L^q$ off-diagonal estimates we refer to \cite{AM2}. It is easy to see that if $e^{-tL}$ satisfies (A1) and the  $L^{p}$--$L^{q}$ off-diagonal estimate \eqref{eq-defn Lpq estimates}, then $L$ satisfies (A2) with the same $\kappa>0$ and any $\epsilon>0$. 
\end{rem}

For each $0<\alpha<n$, we consider the fractional power $L^{-\alpha/\kappa}$ defined by
\[
L^{-\alpha/\kappa} = \f{1}{\Gamma(\alpha/\kappa)}\int_0^\infty s^{\alpha/\kappa} e^{-sL}\frac{\ud s}{s}.
\]
In the next subsection, we will consider a number of operators satisfying the assumptions (A1) and (A2). We now would like to give two typical examples. Firstly, when $L = -\Delta$ is the Laplacian on $\mathbb R^n$ and $\kappa =2$, then  $L^{-\alpha/2}$ turns out out to be the fractional integral defined by \eqref{eq-fractional integral}. In addition, when \( L \) is a second-order elliptic operator in divergence form with complex coefficients, \( L \) satisfies \eqref{eq-defn Lpq estimates} for some \( 1 < p_0 < 2 < q_0 < \infty \), \( \kappa = 2 \) and any $\epsilon>0$. The weighted estimates for the fractional operator \( L^{-\alpha/\kappa} \) were investigated in \cite{AM1}.  We now take a step further to establish a quantitative two-weight estimate for \( L^{-\alpha/\kappa} \) under the weaker assumptions (A1) and (A2). More precisely, we are able to prove the following results, with the definitions of weight provided in Section 3.

\begin{thm}{\label{weighted estimate for L}}
	Let $L$ satisfy (A1) and (A2) for some $1\le p_0<q_0\le \infty$, $\epsilon>0$ and $\kappa>0$. Let $p_0 <p <q< q_0$, $\alpha = n(\f{1}{p}-\f{1}{q})$. Suppose $\mu,\lambda$ are two weights, denote $u=\mu^{\f{p_0 p}{p_0-p}}$, $v=\lambda^{\frac{q_0 q}{q_0 - q}}$ and suppose further that $u,v \in A_{\infty}$ and $(u,v) \in A_{\frac{1}{p_0}-\f{1}{p}, \f{1}{q}-\f{1}{q_0}}^{\f{\alpha}{n}-\f{1}{p_0}+\f{1}{q_0}}$. Then 
    $$\|L^{-\alpha/\kappa} f\|_{L^{q}(\lambda^{q})} \lesssim [u, v]_{A_{\frac{1}{p_0}-\f{1}{p}, \f{1}{q}-\f{1}{q_0}}^{\f{\alpha}{n}-\f{1}{p_0}+\f{1}{q_0}}}([u]_{A_{\infty}}^{\f{1}{q}}+[v]_{A_{\infty}}^{\f{1}{p^{\prime}}})\|f\|_{L^{p}(\mu^{p})}.$$
\end{thm}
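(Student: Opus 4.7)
The plan is to reduce the weighted estimate to a sparse bilinear form estimate via Theorem~\ref{Sparse Domination}, and then to control that form using Hölder's inequality, a principal-cubes decomposition, and the sharp reverse Hölder inequality of Hytönen--Pérez for $A_\infty$ weights.

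First I would verify that $L^{-\alpha/\kappa}$ falls within the scope of Theorem~\ref{Sparse Domination}: the subordination formula
\[
L^{-\alpha/\kappa}=\frac{1}{\Gamma(\alpha/\kappa)}\int_0^\infty s^{\alpha/\kappa}e^{-sL}\frac{\ud s}{s}
\]
together with (A1) and the annular bounds~\eqref{eq2-Tt} transfers the $L^{p_0}$--$L^{q_0}$ off-diagonal estimates from $e^{-sL}$ to $L^{-\alpha/\kappa}$, introducing an extra scale factor $|Q|^{\alpha/n}$. The criterion then produces a sparse family $\mathcal{S}$ with
\[
|\langle L^{-\alpha/\kappa}f,h\rangle|\lesssim \sum_{Q\in\mathcal{S}}|Q|^{1+\alpha/n}\Big(\fint_Q|f|^{p_0}\Big)^{1/p_0}\Big(\fint_Q|h|^{q_0'}\Big)^{1/q_0'}.
\]
Since $(L^q(\lambda^q))^\ast=L^{q'}(\lambda^{-q'})$ under the unweighted pairing $\int f\overline{h}\,\ud x$, the theorem reduces to bounding this sparse form by the asserted constant times $\|f\|_{L^p(\mu^p)}\|h\|_{L^{q'}(\lambda^{-q'})}$.

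Next, I would apply Hölder's inequality inside each average to reintroduce the weights:
\[
\Big(\fint_Q|f|^{p_0}\Big)^{1/p_0}\le \Big(\fint_Q|f\mu|^p\Big)^{1/p}\langle u\rangle_Q^{1/p_0-1/p},\qquad \Big(\fint_Q|h|^{q_0'}\Big)^{1/q_0'}\le \Big(\fint_Q|h\lambda^{-1}|^{q'}\Big)^{1/q'}\langle v\rangle_Q^{1/q_0'-1/q'},
\]
where the exponents on $\mu$ and $\lambda$ match exactly the definitions of $u=\mu^{p_0p/(p_0-p)}$ and $v=\lambda^{q_0q/(q_0-q)}$. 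The pure geometric factor $|Q|^{\alpha/n}\langle u\rangle_Q^{1/p_0-1/p}\langle v\rangle_Q^{1/q_0'-1/q'}$ is then dominated uniformly by $[u,v]_{A^{\gamma}_{r_1,r_2}}$ with $r_1=\frac{1}{p_0}-\frac{1}{p}$, $r_2=\frac{1}{q}-\frac{1}{q_0}$, and $\gamma=\frac{\alpha}{n}-\frac{1}{p_0}+\frac{1}{q_0}$. The residual is an unweighted sparse form in $f\mu\in L^{p}(\ud x)$ and $h\lambda^{-1}\in L^{q'}(\ud x)$, which I would control by a two-step principal-cubes argument run on the averages $\langle|f\mu|^p\rangle_Q$ and $\langle|h\lambda^{-1}|^{q'}\rangle_Q$. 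The Hytönen--Pérez reverse Hölder inequality lets one replace each unweighted average by its $u$- or $v$-weighted counterpart at the cost of factors $[u]_{A_\infty}^{1/q}$ and $[v]_{A_\infty}^{1/p'}$ respectively, after which a Carleson embedding closes the argument.

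The main obstacle is the precise bookkeeping in this last step: the sharp exponents $1/q$ on $[u]_{A_\infty}$ and $1/p'$ on $[v]_{A_\infty}$ require running two coordinated stopping-time families---one for each weight---and balancing the resulting sparse sums via Hölder's inequality at the exponent $p/q$, so that the $A_\infty$ characteristics surface with exactly the powers claimed in the theorem.
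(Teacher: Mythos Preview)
Your two-step plan---apply Theorem~\ref{Sparse Domination} to $L^{-\alpha/\kappa}$, then bound the resulting bilinear sparse form---is the paper's architecture, but both steps as you sketch them have problems.

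For the first step, saying that the subordination formula and~\eqref{eq2-Tt} ``transfer the off-diagonal estimates'' is a substantial understatement. The paper devotes all of Section~\ref{App sec} to this verification; the real work is the weak-type bound for the grand maximal truncation $\mathcal{M}_{L^{-\alpha/\kappa},q_0}$, which requires introducing the approximation operators $P_{N,t}(L)$ of~\eqref{defn Q and P}, splitting into $P_{N,\ell(Q)^\kappa}(L)$ and $I-P_{N,\ell(Q)^\kappa}(L)$ pieces, and carrying out the annular estimates of Lemmas~\ref{lem1}--\ref{lem3}. This is not a routine transfer.

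The more serious gap is in your weighted sparse-form estimate. After your H\"older step the geometric factor is $\langle u\rangle_Q^{1/p_0-1/p}\langle v\rangle_Q^{1/q-1/q_0}$, which (since $\gamma+r_1+r_2=0$ in~\eqref{two-weight}) is indeed bounded by $[u,v]$. But the residual you are left with,
\[
\sum_{Q\in\mathcal{S}}|Q|^{1+\alpha/n}\,\langle|f\mu|\rangle_{p,Q}\,\langle|h\lambda^{-1}|\rangle_{q',Q},
\]
is an \emph{endpoint} sparse form (inner exponents $r=p$, $s=q$ in the notation of Theorem~\ref{weighted estimate}), and this form is \emph{not} bounded on $L^p\times L^{q'}$: on $\mathbb{R}$ take the sparse chain $\{[0,2^{-k}]\}_{k\ge 0}$ with $f\mu(x)\sim x^{-1/p+\epsilon}$ and $h\lambda^{-1}(x)\sim x^{-1/q'+\delta}$ near the origin, and the ratio blows up like $(\epsilon+\delta)^{-1}$. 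No principal-cubes or Carleson argument rescues this, and Hyt\"onen--P\'erez reverse H\"older cannot create room once you have passed to unweighted averages---there is no longer any $u$ or $v$ present to apply it to. The $A_\infty$ hypotheses must be used \emph{before} the two-weight constant is fully extracted. The paper achieves this via the testing-condition route: Lemma~\ref{Lemma1} reduces the form to the testing constants $\zeta,\zeta^{*}$ in~\eqref{best constant estimate}, the $L^q(v)$ norm of $T_R(u)$ is expanded by Lemma~\ref{Lemma3}, and the resulting nested sums are evaluated with Lemma~\ref{Lemma4}, where the $A_\infty$ constants enter and the exponents $1/q$ and $1/p'$ emerge. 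The difficulty you flag is therefore not bookkeeping but a structural defect in the order of operations.
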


Note that in \cite{BFP} the sharp weighted estimate for the class of non-singular integral operators beyond the Calder\'on-Zygmund theory was investigated. In this perspective, Theorem \ref{weighted estimate for L} presents  the two-weight estimate for the class of \textit{fractional operators beyond the Calder\'on-Zygmund theory}.
\begin{rem}
As we mentioned above, if $L=-\Delta$ the Laplacian on $\mathbb R^n$, the fractional power $L^{-\alpha/2}$ becomes the fractional integral operator $I_{\alpha}$ defined by (\ref{eq-fractional integral}). In this case, Theorem \ref{weighted estimate for L} with  $\lambda=\mu=\omega \in A_{p,q}$ provides a better estimate than the sharp weighted estimate (\ref{Lacey's estimate}) since $[\omega]_{A_\infty} \lesssim [\omega]_{A_{p,q}}.$
 \end{rem}

Since 
$$[\omega^{r}]_{A_{r(q-1)+1}} \leqslant [\omega]_{A_q}^{r}[\omega]_{\rm{RH}_r}^{r},$$
we can obtain another weighted estimate under the assumption of $\omega \in A_{1+\frac{1}{p_0}-\frac{1}{p}} \cap \rm{RH}_{q(\frac{q_0}{q})^{\prime}}$, which provides a quantitative weighted estimate for $L^{-\alpha / \kappa} $ and improves the results in \cite[Theorem 1.3]{AM1}. Specifically, we obtain

\begin{cor}{\label{weighted estimate corollary}}
	Let $L$ satisfy (A1) and (A2) for some $1\le p_0<q_0\le \infty$, $\epsilon>0$ and $\kappa>0$. Let $p_0 <p <q< q_0$, $\alpha = n(\f{1}{p}-\f{1}{q})$. Suppose $\omega \in A_{1+\frac{1}{p_0}-\frac{1}{p}} \cap {\rm RH}_{q(\frac{q_0}{q})^{\prime}}$. Then we have
    $$\|L^{-\alpha/\kappa} f\|_{L^{q}(\omega^{q})} \lesssim ([\omega]_{A_{1+\frac{1}{p_0}-\frac{1}{p}}}[\omega]_{\text{\rm RH}_{q(\frac{q_0}{q})^{\prime}}})^{1+\max\{{\frac{p_0 p}{p - p_0} \cdot \frac{1}{q}, \frac{q_0 q}{q_0-q}  \cdot \frac{1}{p^{\prime}}}\}}\|f\|_{L^{p}(\omega^{p})}.$$
\end{cor}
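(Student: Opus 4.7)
The plan is to deduce the corollary directly from Theorem~\ref{weighted estimate for L} specialized to $\mu=\lambda=\omega$, converting the two-weight constants into single-weight ones by means of the identity
\[
[\omega^{r}]_{A_{r(\tau-1)+1}}\leq [\omega]_{A_\tau}^{r}[\omega]_{\mathrm{RH}_r}^{r}
\]
stated immediately before the corollary, together with the classical duality $\omega\in A_\tau\Longleftrightarrow \omega^{1-\tau'}\in A_{\tau'}$ with $[\omega^{1-\tau'}]_{A_{\tau'}}=[\omega]_{A_\tau}^{1/(\tau-1)}$. With $\mu=\lambda=\omega$ one has $u=\omega^{-s_1}$ and $v=\omega^{s_2}$, where
\[
s_1:=\frac{p_0p}{p-p_0},\qquad s_2:=\frac{q_0q}{q_0-q}=q\Big(\frac{q_0}{q}\Big)'.
\]
The key algebraic observation is that $1/p_0-1/p=1/s_1$ and $1/q-1/q_0=1/s_2$, so the exponents in the two-weight fractional class are exactly the reciprocals of the powers of $\omega$ that appear.

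First I would bound the two $A_\infty$ factors in Theorem~\ref{weighted estimate for L}. For $v=\omega^{s_2}$, applying the displayed identity with $\tau=1+1/p_0-1/p$ and $r=s_2$ matches the hypothesis $\omega\in A_{1+1/p_0-1/p}\cap\mathrm{RH}_{q(q_0/q)'}$ and gives
\[
[v]_{A_\infty}\leq [v]_{A_{s_2(1/p_0-1/p)+1}}\lesssim \bigl([\omega]_{A_{1+1/p_0-1/p}}[\omega]_{\mathrm{RH}_{q(q_0/q)'}}\bigr)^{s_2}.
\]
For $u=\omega^{-s_1}$, the duality with $\tau=1+1/p_0-1/p$ and the identities $1-\tau'=-s_1$, $1/(\tau-1)=s_1$ yield $[u]_{A_\infty}\leq [u]_{A_{\tau'}}\lesssim [\omega]_{A_{1+1/p_0-1/p}}^{s_1}$. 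Raising to the appropriate powers and summing gives
\[
[u]_{A_\infty}^{1/q}+[v]_{A_\infty}^{1/p'}\lesssim \bigl([\omega]_{A_{1+1/p_0-1/p}}[\omega]_{\mathrm{RH}_{q(q_0/q)'}}\bigr)^{\max\{s_1/q,\,s_2/p'\}},
\]
which accounts for the $\max$ part of the exponent in the corollary.

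It remains to show that the two-weight constant $[u,v]_{A^{\alpha/n-1/p_0+1/q_0}_{1/p_0-1/p,\,1/q-1/q_0}}$ is controlled by the first power of the product $[\omega]_{A_{1+1/p_0-1/p}}[\omega]_{\mathrm{RH}_{q(q_0/q)'}}$. Substituting $u=\omega^{-s_1}$ and $v=\omega^{s_2}$, the quantity under the supremum becomes, up to the prefactor $|Q|^{\alpha/n-1/p_0+1/q_0}$, the product of the $L^{s_2}$-average of $\omega$ and the $L^{s_1}$-average of $\omega^{-1}$. The reverse H\"older inequality with exponent $s_2=q(q_0/q)'$ bounds the first by $[\omega]_{\mathrm{RH}_{q(q_0/q)'}}\fint_Q\omega$, while the second equals $\bigl(\fint_Q\omega^{-1/(\tau-1)}\bigr)^{\tau-1}$ with $\tau=1+1/p_0-1/p$, so together they are at most $[\omega]_{A_{1+1/p_0-1/p}}[\omega]_{\mathrm{RH}_{q(q_0/q)'}}$. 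A short computation using $\alpha=n(1/p-1/q)$ verifies that the $|Q|$-normalization cancels correctly, yielding the claimed first-power bound. Combining this with the $A_\infty$ estimate of the previous paragraph and inserting into Theorem~\ref{weighted estimate for L} yields the exponent $1+\max\{s_1/q,\,s_2/p'\}=1+\max\{p_0p/((p-p_0)q),\,q_0q/((q_0-q)p')\}$ as claimed. The only delicate step, where I would spend the most care, is this last $|Q|$-bookkeeping against the definition of the two-weight fractional class, since the particular superscript $\alpha/n-1/p_0+1/q_0$ is precisely what encodes the cancellation.
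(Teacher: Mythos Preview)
Your proposal is correct and follows essentially the same approach as the paper, which only sketches the argument in a single sentence before the corollary: specialize Theorem~\ref{weighted estimate for L} to $\mu=\lambda=\omega$ and convert all the two-weight constants via $[\omega^{r}]_{A_{r(\tau-1)+1}}\le[\omega]_{A_\tau}^{r}[\omega]_{\mathrm{RH}_r}^{r}$. Your computation of the $|Q|$-cancellation in the two-weight constant (using $1/s_1=1/p_0-1/p$, $1/s_2=1/q-1/q_0$, and $\alpha=n(1/p-1/q)$) and your separate treatment of $[u]_{A_\infty}$ via the $A_p$ duality are exactly the details the paper leaves implicit; note that the paper's route would instead identify $[u,v]^{s_2}=[\omega^{s_2}]_{A_{1+s_2/s_1}}$ and apply the displayed inequality directly, but the two computations are equivalent.
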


In this paper, we also investigate the Bloom weighted boundedness for iterative commutators of fractional operators. For the sake of simplicity, we only provide a special case here. One could find a more general Bloom weighted estimate and related discussions in Section 4.

\begin{thm}{\label{Bloom weighted estimate for L}}
   Let $L$ satisfy (A1) and (A2) for some $1\le p_0<q_0\le \infty$, $\epsilon>0$ and $\kappa>0$. Let $p_0 <p <q< q_0$, $\alpha = n(\f{1}{p}-\f{1}{q}), m \in \mathbb{N} \cup \{0\}$ and $b\in L_{\rm loc}^{1}(\mathbb{R}^n)$. Suppose $\omega \in A_{1+\frac{1}{p_0}-\frac{1}{p}} \cap {\rm RH}_{q(\frac{q_0}{q})^{\prime}}$. Then we have
    $$\|(L^{-\alpha/\kappa})_b^m f\|_{L^{q}(\omega^{q})} \lesssim C(\omega)\|b\|_{\text{BMO}} \|f\|_{L^{p}(\omega^p)},
    $$
where 
$$C(\omega) = ([\omega]_{A_{1+\frac{1}{p_0}-\frac{1}{p}}}[\omega]_{\text{\rm RH}_{q(\frac{q_0}{q})^{\prime}}})^{1+\max\{{\frac{p_0 p}{p - p_0} \cdot \frac{1}{q}, \frac{q_0 q}{q_0-q}  \cdot \frac{1}{p^{\prime}}}\}+\max\{p_0 p, \frac{p_0^2 p}{ p-p_0 }, q_0^{\prime} q^{\prime}, \frac{(q_0^{\prime})^2 q^{\prime}}{q^{\prime}-q_0^{\prime}}\}  \cdot m}$$
\end{thm}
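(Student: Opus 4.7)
The approach is to combine a bilinear sparse form domination of the iterative commutator $(L^{-\alpha/\kappa})_b^m$ with the one-weight sparse-form estimate that already underlies Corollary~\ref{weighted estimate corollary}. The case $m=0$ is exactly Corollary~\ref{weighted estimate corollary}, so the new content is the commutator iteration.

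First I would establish a bilinear sparse domination for the commutator. The sparse domination criterion (Theorem~\ref{Sparse Domination}) produced earlier in the paper yields for $L^{-\alpha/\kappa}$ an $(s_0,t_0)$-sparse form with $p_0<s_0<p$ and $q<t_0<q_0$. Combining this cube-by-cube with the algebraic identity
\[
(b(x)-b(y))^m = \sum_{k=0}^m \binom{m}{k}(-1)^{k}(b(x)-\ave{b}_Q)^{m-k}(b(y)-\ave{b}_Q)^k,
\]
I would obtain a bound of the form
\[
|\pair{(L^{-\alpha/\kappa})_b^m f}{g}| \lesssim \sum_{k=0}^m \sum_{Q\in\mathcal{S}} |Q|^{1+\alpha/n}\Big(\fint_Q|b-\ave{b}_Q|^{s_0 k}|f|^{s_0}\Big)^{1/s_0}\Big(\fint_Q|b-\ave{b}_Q|^{t_0(m-k)}|g|^{t_0}\Big)^{1/t_0}.
\]

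Next I would separate the BMO factors. Fixing arbitrarily small openings $p_0<\tilde s_0<s_0$ and $t_0<\tilde t_0<q_0$, H\"older's inequality splits each weighted average into an $L^{\tilde s_0}$ (resp.\ $L^{\tilde t_0}$) average of $|f|$ (resp.\ $|g|$) times a pure oscillation average of $|b-\ave{b}_Q|$. John--Nirenberg then gives
\[
\Big(\fint_Q|b-\ave{b}_Q|^{r}\Big)^{1/r}\lesssim \|b\|_\BMO,\qquad r<\infty,
\]
which collapses the Step~1 bound into a plain $(\tilde s_0,\tilde t_0)$-averaged sparse form at the total cost of $\|b\|_\BMO^{m}$. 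I would then apply the one-weight sparse form estimate proved for Corollary~\ref{weighted estimate corollary}, but with $(s_0,t_0)$ replaced by $(\tilde s_0,\tilde t_0)$. The hypothesis $\omega\in A_{1+1/p_0-1/p}\cap \mathrm{RH}_{q(q_0/q)'}$ persists for the shrunken pair with slightly larger characteristic constants, and optimizing the openings produces the additional exponent
\[
\max\Bigl\{p_0 p,\ \tfrac{p_0^2 p}{p-p_0},\ q_0'q',\ \tfrac{(q_0')^2 q'}{q'-q_0'}\Bigr\}\cdot m
\]
on $[\omega]_{A_{1+1/p_0-1/p}}[\omega]_{\mathrm{RH}_{q(q_0/q)'}}$, matching the stated $C(\omega)$.

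The main obstacle is Step~1: $L^{-\alpha/\kappa}$ lies outside the Calder\'on--Zygmund framework, so no pointwise kernel bounds are available and the sparse domination of the commutator has to be extracted by iterating the non-pointwise criterion of Theorem~\ref{Sparse Domination} together with the $L^{p}$--$L^{q}$ off-diagonal control of (A2); in particular one must carefully handle the localized pieces $L^{-\alpha/\kappa}(1-e^{-\ell(Q)^\kappa L})^N$ versus their tails, as in the scheme leading to Corollary~\ref{weighted estimate corollary}. Tracking how the openness of $(s_0,t_0)$ inside $(p_0,q_0)$ must shrink through the $m$ H\"older steps is precisely what produces the $m$-linear dependence in the final exponent of $C(\omega)$.
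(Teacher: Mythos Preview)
Your route and the paper's route agree on the first step (verify the weak-type bounds of Section~5 so that Theorem~\ref{Sparse Domination} applies to $L^{-\alpha/\kappa}$), but they diverge from Step~2 onward, and your Step~2 contains a genuine gap.

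The paper never shrinks the averaging exponents. It applies Theorem~\ref{Sparse Domination} at the endpoint pair $(p_0,q_0')$ and then bounds the resulting sparse form
\[
\sum_{Q\in\mathcal S}\ave{|b-\ave{b}_Q|^m |f|}_{p_0,Q}\ave{|g|}_{q_0',Q}|Q|^{1+\alpha/n}
\]
by the Lerner--Lorist--Ombrosi iterated-sparse-operator machinery (Theorem~\ref{bloom estimate} and its one-weight specialisation Corollary~\ref{bloom estimate2}): one dominates $\ave{|b-\ave{b}_Q|^m|f|}_{p_0,Q}$ by $\|b\|_{\BMO_\nu}^m \ave{(\mathcal A_{\mathcal S',\nu}^{\lfloor p_0 m\rfloor-1}h)^{1/p_0}}_{p_0,Q}$ and then applies the $m=0$ sparse estimate once, at the \emph{original} exponents. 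The $m$-dependent exponent $\max\{p_0p,\,p_0^2p/(p-p_0),\,q_0'q',\,(q_0')^2q'/(q'-q_0')\}$ comes from the weighted bounds for the iterated operator $\mathcal A_{\mathcal S',\nu}^{k}$, not from any openness of weight classes.

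Your proposed alternative---H\"older plus John--Nirenberg followed by the plain sparse estimate at shrunk exponents---has two problems. First, the H\"older direction is written backwards: separating $|b-\ave{b}_Q|^{s_0k}|f|^{s_0}$ forces the new $f$-exponent $\tilde s_0$ to be \emph{larger} than $s_0$, not smaller; you cannot land on an exponent pair strictly inside $(p_0,q_0)$ this way while starting from one already inside. Second, and more seriously, after H\"older the sparse estimate you invoke requires $\omega\in A_{1+1/\tilde s_0-1/p}\cap\mathrm{RH}_{q(\tilde t_0/q)'}$, which is a \emph{strictly smaller} class than the hypothesis $A_{1+1/p_0-1/p}\cap\mathrm{RH}_{q(q_0/q)'}$. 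Passing from the latter to the former needs quantitative self-improvement of $A_p$ and $\mathrm{RH}_r$, and the exact power of $[\omega]_{A_{\cdot}}[\omega]_{\mathrm{RH}_{\cdot}}$ this produces is a delicate optimisation you have not carried out; there is no reason to expect it to reproduce the stated exponent on the nose.

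Finally, your Step~1 is slightly misdescribed: Theorem~\ref{Sparse Domination} already delivers the sparse form for the commutator $T_b^m$ directly (via the proof of Theorem~\ref{Sparse Domination-2}), so there is no need to first sparse-dominate $T$ and then insert the algebraic identity by hand; indeed, without a pointwise kernel for $L^{-\alpha/\kappa}$ that insertion would itself be problematic.
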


To prove Theorem \ref{weighted estimate for L} and Theorem \ref{Bloom weighted estimate for L}, we will establish a sparse domination criterion for a general class of fractional operators, including the classical fractional integrals \eqref{eq-fractional integral} and the class of fractional integrals detailed in \cite{IRV}. See Theorem \ref{Sparse Domination}. Later, we will show that the fractional powers \( L^{-\alpha/\kappa} \) fall within the scope of our theory. See Section \ref{App sec}.

For $1 \leqslant s \leqslant \infty$, we define 
\begin{equation}{\label{sharp grand maximal function}}
\mathcal{M}_{T,s}^{\#}f(x):=\sup_{x \in Q}\operatorname{osc}_{s}(T(f\chi_{\mathbb{R}^n \backslash {3Q}});Q), \quad x\in \mathbb{R}^n,
\end{equation}
as the sharp grand maximal truncation operator, where
$$\operatorname{osc}_{s}(f;Q):=\Big(\frac{1}{|Q|^2}\int_{Q \times Q}|f(x^{\prime})-f(x^{\prime\prime})|^{s}\ud x^{\prime}\ud x^{\prime\prime}\Big)^{1/s}$$ 
and the supremum is taken over all cubes $Q$ containing $x$. 

To present the sparse domination criterion, we also need the following boundedness condition.
\begin{defn}
	Let $1 \leqslant p_0 < \infty$, $0 \leqslant \alpha < \frac{n}{p_0}$. We say that an operator $T$ is \emph{locally weak $L^{p_0} \rightarrow L^{\frac{p_0n}{n-\alpha p_0}}$ bounded} if there exists a non-increasing function $\varphi_{T,p_0,\alpha}:(0,1) \rightarrow [0,\infty)$ such that 
	$$|\{x \in Q:|T(f\chi_Q)(x)| > \varphi_{T,p_0,\alpha}(\lambda)\ave{|f|}_{p_0,Q}|Q|^{\frac{\alpha}{n}}\}| \leqslant \lambda |Q|$$
	holds for all cube $Q$ and $f \in L^{p_0}(Q)$. 
\end{defn}
When $\alpha=0$, this definition is exactly the $W_{p_0}$ property in \cite{LO}. Meanwhile, it should be noted that the usual weak $L^{p_0} \rightarrow L^{\frac{p_0n}{n-\alpha p_0}}$ boundedness implies the locally weak $L^{p_0} \rightarrow L^{\frac{p_0n}{n-\alpha p_0}}$ boundedness.

Below, we present a result on the sparse domination for iterative commutators. We point out that in the case of $m=0$, the following result provide a sparse domination for the underlying operator. For precise definitions of dyadic lattices and sparse families, we refer the reader to Section 2.

\begin{thm}{\label{Sparse Domination}}
	Let $1 \leqslant p_0 < q_0 \leqslant \infty$, $m \in \mathbb{N} \cup \{0\}$. Suppose $T$ is a sublinear operator and both $T$ and $\mathcal{M}_{T,q_0}^{\#}$ are locally weak $L^{p_0} \rightarrow L^{\frac{p_0n}{n-\alpha p_0}}$ bounded. Then there exist $C_{m,n} > 1$ and $0<\lambda_{m,n} < 1$ such that for any $f,g \in L_{c}^{\infty}(\mathbb{R}^n), b \in L_{loc}^{1}(\mathbb{R}^n)$, there exists a dyadic lattice $\mathscr{D}$ and a $\frac{1}{2\cdot 3^{n}}$-sparse collection of cubes $\mathcal{S} \subset \mathscr{D}$ such that 
	\begin{align}
		\int_{\mathbb{R}^n}|T_b^{m}f||g| \ud x \leqslant C&\Big(\sum\limits_{Q \in \mathcal{S}}\ave{|b-\ave{b}_Q|^{m}|f|}_{p_0,Q}\ave{|g|}_{q_0^{\prime},Q}{|Q|}^{1+\frac{\alpha}{n}}\nonumber \\
		&+ \sum\limits_{Q \in \mathcal{S}}\ave{|f|}_{p_0,Q}\ave{|b-\ave{b}_Q|^{m}|g|}_{q_0^{\prime},Q}{|Q|}^{1+\frac{\alpha}{n}}\Big), \nonumber
	\end{align}
	where
	\begin{equation}{\label{Coefficient in Sparse Domination}} 
		C:=C_{m,n}(\varphi_{T,p_0,\alpha}(\lambda_{m,n})+\varphi_{\mathcal{M}_{T,q_0}^{\#},p_0,\alpha}(\lambda_{m,n})).
	\end{equation} 
\end{thm}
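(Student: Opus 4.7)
The plan is to adapt Lerner's recursive principal-cubes construction (\cite{L4,L,LO}) to the present fractional iterated-commutator setting. By the standard $3^n$-adic trick, one may fix one of finitely many dyadic lattices $\mathscr{D}$ and reduce the theorem to proving, for every cube $Q_0 \in \mathscr{D}$, a local estimate which provides a stopping collection $\{P_j\}\subset Q_0$ with $\sum_j|P_j|\le \tfrac12|Q_0|$, outside of which $T_b^m(f\chi_{3Q_0})$ is controlled by averages over $Q_0$. Iterating into the children then yields the full $\tfrac{1}{2\cdot 3^n}$-sparse family $\mathcal S$.

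On a fixed $Q_0$, I would expand the commutator using the polynomial identity
\[
(b(x)-b(y))^m=\sum_{k=0}^m\binom{m}{k}(-1)^{m-k}(b(x)-\ave{b}_{Q_0})^k(b(y)-\ave{b}_{Q_0})^{m-k},
\]
which, via \eqref{commutator}, gives
\[
T_b^m(f\chi_{3Q_0})(x)=\sum_{k=0}^m\binom{m}{k}(-1)^{m-k}(b(x)-\ave{b}_{Q_0})^k\,T\bigl((b-\ave{b}_{Q_0})^{m-k}f\chi_{3Q_0}\bigr)(x).
\]
Each summand thus becomes the bare operator $T$ applied to a reweighted datum, decorated by a $b$-factor at the output. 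The tail $T\bigl((b-\ave{b}_{Q_0})^{m-k}f\chi_{(3Q_0)^c}\bigr)$ is handled by a Lebesgue-point argument through its grand maximal truncation $\mathcal M^{\#}_{T,q_0}$.

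The stopping collection $\{P_j\}$ is defined as the maximal dyadic subcubes of $Q_0$ on which at least one of the following quantities exceeds a large constant $A=A(m,n)$ times its average over $Q_0$: $\ave{|b-\ave{b}_{Q_0}|^{\ell}|f|}_{p_0,\cdot}$ and $\ave{|b-\ave{b}_{Q_0}|^{\ell}|g|}_{q_0',\cdot}$ for $0\le\ell\le m$, together with Chebyshev-type bad sets arising from applying the locally weak $L^{p_0}\to L^{p_0n/(n-\alpha p_0)}$ hypotheses for $T$ and $\mathcal M^{\#}_{T,q_0}$ to the reweighted data on $3Q_0$. Choosing $A$ large and the weak-type level $\lambda_{m,n}$ small, a union bound over the $O(m)$ stopping criteria combined with the locally weak hypotheses forces $\sum_j|P_j|\le\tfrac12|Q_0|$; this is precisely where the constant $C_{m,n}\bigl(\varphi_{T,p_0,\alpha}(\lambda_{m,n})+\varphi_{\mathcal M^{\#}_{T,q_0},p_0,\alpha}(\lambda_{m,n})\bigr)$ is manufactured. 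On the good set $Q_0\setminus\bigcup_j P_j$, each summand of the binomial expansion is pointwise dominated by $|b(x)-\ave{b}_{Q_0}|^k\,\ave{|b-\ave{b}_{Q_0}|^{m-k}|f|}_{p_0,Q_0}\,|Q_0|^{\alpha/n}$, and recursion supplies the next generation of sparse cubes.

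Pairing the local estimate against $|g|$ and summing over generations produces, for every $Q\in\mathcal S$ and every $0\le k\le m$, a contribution $\ave{|b-\ave{b}_Q|^k|g|}_{q_0',Q}\ave{|b-\ave{b}_Q|^{m-k}|f|}_{p_0,Q}|Q|^{1+\alpha/n}$. The intermediate indices $0<k<m$ are eliminated by H\"older in the variable $|b-\ave{b}_Q|$ with exponents $m/k$ and $m/(m-k)$, giving $\ave{|b-\ave{b}_Q|^k|g|}_{q_0',Q}\le \ave{|b-\ave{b}_Q|^m|g|}_{q_0',Q}^{k/m}\ave{|g|}_{q_0',Q}^{1-k/m}$ and symmetrically for $f$, followed by weighted AM--GM; only the two endpoint terms $k=0$ and $k=m$ displayed in the statement survive. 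The main technical obstacle is the simultaneous calibration of the $O(m)$ stopping criteria so that the packing bound $\sum_j|P_j|\le\tfrac12|Q_0|$ is preserved while every term of the binomial expansion is controlled on the good set; the fractional exponent enters only through the scaling $|Q|^{\alpha/n}$ and introduces no essential new difficulty beyond the $\alpha=0$ case treated in \cite{L,LO}.
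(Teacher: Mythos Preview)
Your overall strategy coincides with the paper's: binomial expansion of $T_b^m$ around $\ave{b}_{Q_0}$, Lerner-type stopping cubes selected via the locally weak bounds for $T$ and $\mathcal M^{\#}_{T,q_0}$, then iteration, and finally collapsing the intermediate indices $0<k<m$ (the paper does this via the elementary inequality $c_k\le c_0+c_m$, equivalent to your H\"older/AM--GM step). The extra stopping criteria you impose on the $g$-averages are not used in the paper and are unnecessary.

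There is, however, a genuine gap in your sketch. Your recursion accounts for the good-set integral $\int_{Q_0\setminus\bigcup_j P_j}|T_b^m(f\chi_{3Q_0})||g|$ and for the pieces $\int_{P_j}|T_b^m(f\chi_{3P_j})||g|$ sent to the next generation, but it omits the ``transition'' term
\[
H_2=\sum_j\int_{P_j}\bigl|T_b^m(f\chi_{3Q_0\setminus 3P_j})\bigr|\,|g|\,\ud x,
\]
which is not covered by either. Your sentence about the ``tail $T((b-\ave{b}_{Q_0})^{m-k}f\chi_{(3Q_0)^c})$'' names the wrong object: that tail vanishes at the top level. The piece that must be controlled is $\psi_k:=T((b-\ave{b}_{Q_0})^{m-k}f\chi_{3Q_0\setminus 3P_j})$ on each $P_j$, and a pointwise bound for it is not available. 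The paper handles this by choosing, inside each $P_j$, a ``good'' subset $G_{P_j}$ of measure $\gtrsim|P_j|$ (removing, in addition to the original bad sets, a further Chebyshev set $\tilde\Omega_k$ for $T$ on $3P_j$), picking a reference point $y\in G_{P_j}$, and writing $|\psi_k(x)|\le|\psi_k(x)-\psi_k(y)|+|\psi_k(y)|$. The oscillation term is paired with $|b-\ave{b}_{Q_0}|^k|g|$ via H\"older with exponent $q_0$ and bounded by $\mathcal M^{\#}_{T,q_0}(\eta_{m-k})(y)$; this is precisely what forces the $q_0'$-average on the $g$-side. The reference value $|\psi_k(y)|$ is bounded by $|T(\eta_{m-k})(y)|+|T(\xi_{m-k})(y)|$, both controlled at the good point (the second requires the extra set $\tilde\Omega_k$ and the maximal-function stopping $M_k$ to pass from $3P_j$ back to $3Q_0$). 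You should make this step explicit; without it the argument does not close.
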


With this criteria at hand, we can prove the two-weight estimate for such operators as detailed in Section 3. As a byproduct, we provide a general framework for the quantitative weighted bound for sparse forms which improves the results of \cite{Li-2017, FH, LLO}. See Theorem \ref{weighted estimate} below. Moreover, we also prove the Bloom weighted estimate for commutators of such operators with a similar strategy as in \cite{LLO}. See Section 4.

\subsection{Some applications}{\label{some application}}
We would like to provide some typical examples of important operators satisfying (A1) and (A2).

\begin{enumerate}[{\rm (a)}]
	\item If $L$ is a non-negative self adjoint operator on $L^2(\mathbb R^n)$ satisfying the Poisson upper bound of order $\kappa$, i.e., there exist $C>0$ and $\kappa, \epsilon_0>0$ such that 
	\[
	|e^{-tL}(x,y)|\le \f{C}{t^{n/\kappa}}\Big(\f{t^{1/\kappa}}{|x-y|+t^{1/\kappa}}\Big)^{n+\epsilon_0}
	\] 
	for all $t>0$ and $x,y\in \mathbb R^n$, where $e^{-tL}(x,y)$ denote the kernel of $e^{-tL}$, then $L$ satisfies (A1) and (A2) with $p_0=1$ and $q_0=\infty$ and any $\epsilon \in (0,\epsilon_0)$. See for example \cite{BD}. In particular, when $L=-\Delta$ the Laplacian, the conditions (A1) and (A2) are fulfilled with $\kappa=2$ and $p_0=1, q_0=\infty$. In this case, it is known that the fractional power $(-\Delta)^{-\alpha/2}$ coincides with the fractional integral operator $I_\alpha$ defined by \eqref{eq-fractional integral}. See for example \cite{St}.
	
	\item \textit{The second-order elliptic divergence operator:} let $L=-\text{div}(A(x)\nabla)$ be the second-order elliptic divergence operator, where $A(x)$ is an $n \times n$ matrix defined on $\mathbb{R}^n$ with complex $L^\infty$-coefficients satisfying the ellipticity (or "accretivity") conditions
	\[
	\lambda |\xi|^2 \leq \Re(A\xi \cdot \overline{\xi}) \quad \text{and} \quad |A\xi \cdot \overline{\zeta}| \leq \Lambda |\xi| \cdot |\zeta|,
	\]
	for $\xi, \zeta \in \mathbb{C}^n$ and for some $\lambda, \Lambda$ such that $0 < \lambda < \Lambda < \infty$. 
	Then $L$ satisfies (A1) and \eqref{eq-defn Lpq estimates} for $\kappa=2$ and some $1\le p_0<q_0\le \infty$. See \cite{Au} for the precise values of $p_0$ and $q_0$. Consequently, $L$ satisfies (A1) and (A2). It emphasizes that the weighted inequalities for the fractional power $L^{-\alpha/2}$ were investigated in \cite{AM1}, but not the sharp weighted estimates. Similarly, if $L$  is a higher order divergence form elliptic operator  with complex bounded measurable coefficients as investigated in \cite{Ddy}, then $L$ also satisfies (A1) and (A2). See \cite[Theorem 3.2]{Ddy} for precise values of $\kappa$ and $p_0, q_0$.
	
	\item \textit{Generalized Hardy operator:} consider the following generalized Hardy operators on $\mathbb{R}^n, n\in \mathbb{N}$,
	\begin{equation}\label{defn-La}
		L = (-\Delta)^{\beta/2}+a|x|^{-\beta} \quad \text{with} \quad 0<\beta<\min\{2, n\} \ \ \text{and} \ \ \quad a\geq a^*,
	\end{equation}
	where
	$$
	a^*=-\f{2^\beta\Gamma((n+\beta)/4)^2}{\Gamma((n-\beta)/4)^2}.
	$$
	It is known that $L$ is a nonnegative self adjoint operator and hence it satisfies (A1). In addition, it was proved in \cite{BD} that $L$ satisfies (A2) with some $1\le p_0< q_0\le \infty$, $\kappa=\beta$ and any $\epsilon\in(0,\beta)$. See \cite{BD} for the precise values of $p_0$ and $q_0$.

	\item \textit{Hardy operators in a half-space:} Consider the following generalized Schr\"odinger operators in $L^{2}(\mathbb{R}^n_+)$,
	\begin{equation}\label{defn-La}
		L= (-\Delta)^{\beta/2}_{\mathbb R^n_+} + \lambda x_n^{-\beta},
	\end{equation}
	where $\mathbb R_+^n:=\mathbb R^{n-1}\times\mathbb R_+$, $\mathbb R_+:=(0,\infty)$, $\beta\in(0,2]$,  $(-\Delta)_{\mathbb R_+^n}$ is the Laplacian with a Dirichlet boundary condition on $(\mathbb R_+^n)^c$,
	and, when $\beta\in(0,2)$, $(-\Delta)^{\beta/2}_{\Rd_+}$ denotes the regional fractional Laplacian, which was introduced, e.g., in \cite{BBC}.
	It is known that $L$ is a non-negative self adjoint operator and hence it satisfies (A1). In addition, it was proved in \cite{BD} that $L$ satisfies (A2) with some $1\le p_0< q_0\le \infty$, $\kappa=\beta$ and any $\epsilon \in (0,\beta)$. See \cite{BM} for the precise values of $p_0$ and $q_0$.
	
\end{enumerate}

The organization of the paper is as follows. Section 2 will prove Theorem \ref{Sparse Domination}. Two-weight estimates and the Bloom weighted estimates for the operators satisfying the assumptions of Theorem \ref{Sparse Domination} will be given in Section 3 and Section 4 respectively. Finally, the proof of Theorem {\ref{weighted estimate for L}} will be detailed in Section 5.

\section{A Sparse Domination for Commutators}\label{Sec 2}
 This section is dedicated to prove Theorem \ref{Sparse Domination}, which extends the conclusion in \cite{LLO} to the off-diagonal case. Firstly, we provide some definitions that will be used later on. 
 
 We now recall the definition of dyadic lattice and its basic properties as outlined in \cite{LN}. By a cube in $\mathbb{R}^n$, we mean a half-open cube $Q=\mathop{\Pi}\limits_{i=1}^{n}[x_i-h,x_i+h)$, where $h > 0$. Denote by $\ell(Q)$ the sidelength of $Q$. For $\lambda > 0$, define $\lambda Q = \mathop{\Pi}\limits_{i=1}^{n}[x_i - \lambda h,x_i + \lambda h)$. Given a cube $Q_0 \subset \mathbb{R}^n$, let $\mathscr{D}(Q_{0})$ denote the set of all dyadic cubes with respect to $Q_0$, i.e., the cubes obtained by repeated subdivision of $Q_0$ and each of its descendants into $2^{n}$ congruent subcubes.

A dyadic lattice $\mathscr{D}$ in $\mathbb{R}^n$ is any collection of cubes such that,

(i) if $Q \in \mathscr{D}$, then $\mathscr{D}(Q) \subset \mathscr{D}$;

(ii) every two cubes $Q,Q^{\prime} \in \mathscr{D}$ have a common ancestor, i.e., there exists $R \in \mathscr{D}$ such that $Q,Q^{\prime} \in \mathscr{D}(R)$;

(iii) for every compact set $K \subset \mathbb{R}^n$, there exists a cube $Q \in \mathscr{D}$ containing $K$.

The following theorem is known as the "three lattice theorem".
\begin{thm}\label{thm2.1}\emph{(}\cite[Theorem 3.1]{LN}\emph{)}
For every dyadic lattice $\mathscr{D}$, there exist $3^{n}$ dyadic lattices $\mathscr{D}^1,\cdots,\mathscr{D}^{3^n}$ such that
$$\{3Q:Q \in \mathscr{D}\}=\mathop{\cup}_{j=1}^{3^n}\mathscr{D}^j$$
and for every cube $Q \in \mathscr{D}$ and $j=1,\cdots,3^n$, there exists a unique cube $R \in \mathscr{D}^j$ of sidelength $\ell(R)=3\ell(Q)$ containing $Q$.  
\end{thm}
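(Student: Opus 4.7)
The plan is to construct the $3^n$ dyadic lattices $\mathscr{D}^j$ explicitly from $\mathscr{D}$. The key geometric observation is that at each scale $3\cdot 2^k$ there are exactly $3^n$ distinct ways to tile $\mathbb{R}^n$ by cubes of side length $3\cdot 2^k$ whose corners align with the dyadic grid of $\mathscr{D}$, parametrized by an offset $\alpha_k\in\{0,1,2\}^n$; each $\mathscr{D}^j$ will pick, for each scale, exactly one of these tilings, and different choices of the offset pattern produce different lattices.

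After an inconsequential translation, assume each $Q\in\mathscr{D}$ of side $2^k$ has the form $Q=2^k j+[0,2^k)^n$; then a short computation gives $3Q=2^k(j-\mathbf{1})+[0,3\cdot 2^k)^n$, so the cubes of this form at scale $3\cdot 2^k$ are precisely those with lower-left corner in $2^k\mathbb{Z}^n$. They split into $3^n$ partitions $P_k^{\alpha_k}:=\{2^k(3m+\alpha_k)+[0,3\cdot 2^k)^n:m\in\mathbb{Z}^n\}$ indexed by $\alpha_k\in\{0,1,2\}^n$. To combine these partitions into a single lattice I must choose $\alpha_k$ consistently across $k$ so that dyadic subdivision carries cubes of $P_k^{\alpha_k}$ to cubes of $P_{k-1}^{\alpha_{k-1}}$. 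Expanding the corner of a child cube as $2^{k-1}(6m+2\alpha_k+3\beta)$ for $\beta\in\{0,1\}^n$ yields the compatibility relation $\alpha_{k-1}\equiv 2\alpha_k\pmod 3$; since $2$ is a unit in $\mathbb{Z}/3\mathbb{Z}$, this uniquely determines the full sequence from its value at one base scale, giving $\alpha_k\equiv 2^k\alpha\pmod 3$, which alternates between $\alpha$ and $2\alpha$ with period two. Letting $\alpha$ range over $\{0,1,2\}^n$ produces the $3^n$ candidate lattices $\mathscr{D}^\alpha:=\bigcup_k P_k^{\alpha_k}$.

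What remains is essentially book-keeping. Property (i) for $\mathscr{D}^\alpha$ holds by construction of $\alpha_k$, while (ii) and (iii) follow from each $P_k^{\alpha_k}$ being an honest tiling of $\mathbb{R}^n$ by cubes whose side length tends to infinity with $k$. For the equality $\{3Q:Q\in\mathscr{D}\}=\bigcup_\alpha\mathscr{D}^\alpha$, every $3Q$ at scale $3\cdot 2^k$ sits in exactly one tiling, determined by $(j-\mathbf{1})\bmod 3$, and conversely every cube in $\bigcup_\alpha\mathscr{D}^\alpha$ is manifestly of the form $3Q$ because its corner lies in $2^k\mathbb{Z}^n$. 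For the uniqueness clause on $R\in\mathscr{D}^j$ of side $3\ell(Q)$ containing $Q$, the partition structure of $P_k^{\alpha_k}$ gives at most one containing cube; and a short coordinate comparison confirms that a dyadic cube of side $2^k$, being one-third as wide as the tiling cubes, fits entirely inside a single cube of $P_k^{\alpha_k}$ rather than straddling a boundary.

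The only subtle step is pinning down the compatibility relation $\alpha_{k-1}\equiv 2\alpha_k\pmod 3$ under dyadic doubling and recognising that the invertibility of $2$ modulo $3$ is exactly what makes the scale-indexed offsets well defined; once this modular arithmetic is in place, the rest of the argument is routine. I therefore expect the main effort of a fully written proof to reside in that single computation, with the verification of the lattice axioms and the uniqueness claim following quickly from the tiling structure.
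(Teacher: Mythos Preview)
The paper does not prove this statement; it is quoted verbatim from Lerner--Nazarov \cite{LN} and used as a black box. Your proposal therefore cannot be compared against a proof in the present paper, only against the original argument in \cite{LN}, which proceeds along the same lines: at each scale the triples $3Q$ split into $3^n$ tilings, and one threads a consistent choice through the scales using that $2$ is invertible modulo $3$.

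There is, however, a genuine gap in your reduction step. You write ``After an inconsequential translation, assume each $Q\in\mathscr{D}$ of side $2^k$ has the form $Q=2^k j+[0,2^k)^n$.'' This is not generally possible. A dyadic lattice in the sense of \cite{LN} is built by starting from a cube $Q_0$ and successively choosing one of the $2^n$ possible parents at each level; different choices give genuinely different lattices, and a single rigid translation can align at most finitely many generations with the standard grid. Concretely, the generation of side $2^k$ is a tiling $\{2^k m + c_k + [0,2^k)^n : m\in\mathbb{Z}^n\}$ with offsets satisfying $c_{k-1}\equiv c_k \pmod{2^{k-1}\mathbb{Z}^n}$, and there is in general no $t$ with $t\equiv -c_k \pmod{2^k}$ for all $k$.

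The fix is minor and does not disturb your main idea. Carry the offsets $c_k$ through the computation: the triples at level $k$ still split into $3^n$ tilings $P_k^{\alpha_k}$ indexed by $\alpha_k\in\{0,1,2\}^n$, and the compatibility under dyadic halving becomes $\alpha_{k-1}\equiv 2\alpha_k + d_k \pmod 3$ for a shift $d_k$ determined by $c_k,c_{k-1}$. Since $2$ is still a unit in $\mathbb{Z}/3\mathbb{Z}$, the sequence $(\alpha_k)$ is again uniquely determined by any single value, and the remaining verification (lattice axioms, the covering and uniqueness claims) goes through exactly as you outlined. So the architecture of your proof is correct; only the ``WLOG standard lattice'' shortcut needs to be replaced by this offset bookkeeping.
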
 

We say that a family $\mathcal{S}$ of cubes in $\mathbb{R}^n$ is $\eta-$sparse, $0< \eta < 1$, if for every $Q \in \mathcal{S}$, there exists a  measurable pairwise disjoint family $\{E_Q\}_{Q \in \mathcal{S}}$ such that $E_Q \subset Q$ and $|E_Q| \geqslant \eta|Q|$.  
For the sake of simplicity, in this paper, we call $\mathcal{S}$ sparse if there exists an $\eta$ such that $\mathcal{S}$ is $\eta$-sparse.

Throughout this paper, for a cube $Q \subset \mathbb{R}^n$, $r > 0$ and $f \in L_{loc}^{r}(\mathbb{R}^n), u \in L_{loc}^{1}(\mathbb{R}^n)$, we denote 
$$\ave{f}_{r,Q}:=\Big(\frac{1}{|Q|}\int_{Q}f^{r}\Big)^{\frac{1}{r}}, \quad \text{and} \quad \ave{f}_{r,Q}^{u}:=\Big(\frac{1}{u(Q)}\int_{Q}f^{r} u \Big)^{\frac{1}{r}}.$$ 
In particular, when $r=1$, we simplify the notation as $\ave{f}_{Q}:=\ave{f}_{1,Q}, \ave{f}_{Q}^{u}:=\ave{f}_{1,Q}^{u}$. We also define the maximal function by
$$\operatorname{M}_{r}f:=\sup_{Q \in \mathscr{D}}\ave{|f|}_{r,Q}\chi_Q, \quad \text{and} \quad \operatorname{M}_{r,u}f:=\sup_{Q \in \mathscr{D}}\ave{|f|}_{r,Q}^{u}\chi_Q .$$

The proof of this theorem adopts a similar strategy as in \cite{LLO}, we will show the main steps for the completeness of this paper. In fact, Theorem \ref{Sparse Domination} is a direct corollary of the following two results.

\begin{thm}{\label{Sparse Domination-2}}
Under the assumptions of Theorem \ref{Sparse Domination} we have
$$ \int_{\mathbb{R}^n}|T_b^{m}f||g| \ud x \leqslant C\sum\limits_{k=0}^{m}\Big(\sum\limits_{Q \in \mathcal{S}}\ave{|b-\ave{b}_Q|^{m-k}|f|}_{p_0,Q}\ave{|b-\ave{b}_Q|^{k}|g|}_{q_0^{\prime},Q}{|Q|}^{1+\frac{\alpha}{n}}\Big),
$$
where C is given by (\ref{Coefficient in Sparse Domination}).
\end{thm}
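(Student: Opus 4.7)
The approach will follow the recursive stopping-time framework of Lerner--Lorist--Ombrosi \cite{LLO}, adapted to the off-diagonal ($\alpha>0$) setting. By a standard exhaustion argument one reduces to a local estimate on a fixed cube $Q_0\in \mathscr{D}_0$ (a dyadic lattice) containing $\supp f\cup \supp g$. Set $c_0=\ave{b}_{Q_0}$ and use the binomial identity
\[
(b(x)-b(y))^m=\sum_{k=0}^m \binom{m}{k}(-1)^{m-k}(b(x)-c_0)^k (b(y)-c_0)^{m-k}
\]
together with \eqref{commutator} to split the integral $\int_{Q_0}|T_b^m(f\chi_{3Q_0})||g|$ into $m+1$ pieces of the form $\int_{Q_0}|b-c_0|^k |T((b-c_0)^{m-k}f\chi_{3Q_0})||g|$. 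The target is the recursive claim: there exist a pairwise disjoint family $\{P_j\}\subset \mathscr{D}(Q_0)$ with $\sum_j|P_j|\leqslant \tfrac12|Q_0|$ and a constant $C$ as in \eqref{Coefficient in Sparse Domination} such that
\begin{align*}
\int_{Q_0}|T_b^m(f\chi_{3Q_0})||g|\leqslant\; &C\sum_{k=0}^m\ave{|b-c_0|^{m-k}|f|}_{p_0,3Q_0}\ave{|b-c_0|^k|g|}_{q_0',Q_0}|Q_0|^{1+\alpha/n}\\
&+\sum_j\int_{P_j}|T_b^m(f\chi_{3P_j})||g|.
\end{align*}
Iterating this claim generates a $\tfrac12$-sparse family in $\mathscr{D}_0$, and the three-lattice theorem (Theorem \ref{thm2.1}) converts the $3Q_0$-averages into averages over cubes of one of the $3^n$ shifted lattices, yielding the stated $\tfrac{1}{2\cdot 3^n}$-sparseness.

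To construct $\{P_j\}$ I would take the maximal dyadic subcubes of $Q_0$ on which one of the following fails for some $k\in\{0,\ldots,m\}$: \textup{(i)} $|T((b-c_0)^{m-k}f\chi_{3Q_0})(x)|\leqslant \varphi_{T,p_0,\alpha}(\lambda_{m,n})\ave{|b-c_0|^{m-k}|f|}_{p_0,3Q_0}|3Q_0|^{\alpha/n}$; \textup{(ii)} the analogous inequality with $\mathcal{M}^{\#}_{T,q_0}((b-c_0)^{m-k}f\chi_{3Q_0})$ and $\varphi_{\mathcal{M}^{\#}_{T,q_0},p_0,\alpha}$ in place of $T$ and $\varphi_{T,p_0,\alpha}$; \textup{(iii)} $\ave{|b-c_0|}_Q\leqslant C_n\ave{|b-c_0|}_{Q_0}$. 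The hypothesised locally weak $L^{p_0}\!\to\! L^{p_0 n/(n-\alpha p_0)}$ boundedness of $T$ and of $\mathcal{M}^{\#}_{T,q_0}$ controls the bad sets of type \textup{(i)}--\textup{(ii)}, while the dyadic weak $L^1$ bound controls \textup{(iii)}. Choosing $\lambda_{m,n}$ of order $1/(C(m+1)\cdot 3^n)$ then forces the union of bad sets to have measure at most $\tfrac12|Q_0|$.

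On the good set $E_{Q_0}=Q_0\setminus \bigcup_j P_j$ the pointwise bound from \textup{(i)} and H\"older's inequality in the duality $L^{q_0}/L^{q_0'}$ produce the $k$th boundary term directly. On each $P_j$, sublinearity of $T$ and the inclusion $\supp f\subset Q_0$ give
\[
|T((b-c_0)^{m-k}f\chi_{3Q_0})|\leqslant |T((b-c_0)^{m-k}f\chi_{3P_j})|+|T((b-c_0)^{m-k}f\chi_{\mathbb{R}^n\setminus 3P_j})|.
\]
The first summand feeds the recursion on $P_j$ after re-expanding $(b-c_0)^{m-k}$ around $c_{P_j}=\ave{b}_{P_j}$ by a second binomial identity (this is the mechanism that produces the full range $k=0,\ldots,m$ in the boundary term over $P_j$), with the spurious factors $(c_{P_j}-c_0)^l$ absorbed via \textup{(iii)}. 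The second is handled by picking any reference point $x^*\in E_{Q_0}$ at which \textup{(ii)} holds: this gives $\operatorname{osc}_{q_0}(T((b-c_0)^{m-k}f\chi_{\mathbb{R}^n\setminus 3P_j});P_j)\leqslant \mathcal{M}^{\#}_{T,q_0}(\cdot)(x^*)\leqslant \textup{threshold}$, and an $L^{q_0}/L^{q_0'}$ H\"older pairing against $|b-c_0|^k|g|$ on $P_j$ followed by H\"older's inequality for the sum in $j$ yields a further contribution of boundary-term type for $Q_0$.

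The main technical hurdle is the second binomial expansion around the centres $c_{P_j}$: the resulting factors $(c_{P_j}-c_0)^l$ must be absorbed carefully using the mean-oscillation stopping \textup{(iii)}, so that the induction closes with a constant of the form \eqref{Coefficient in Sparse Domination} and without multiplicative blow-up in $m$. This is precisely the point at which the H\"older bookkeeping of \cite{LLO} has to be transplanted to the fractional setting, with the new $|Q|^{\alpha/n}$ factors tracked through every stopping threshold.
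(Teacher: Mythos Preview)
Your outline follows the Lerner--Lorist--Ombrosi framework, but it departs from the paper's proof at a structural point that creates the very ``main technical hurdle'' you flag. In the paper the split into recursive and non-recursive parts is done \emph{before} any binomial expansion, at the level of the center-free operator $T_b^m$: on each stopping cube $Q'$ one writes, using only sublinearity of $T$,
\[
|T_b^m(f\chi_{3Q})|\le |T_b^m(f\chi_{3Q\setminus 3Q'})|+|T_b^m(f\chi_{3Q'})|,
\]
so that the recursive piece $\int_{Q'}|T_b^m(f\chi_{3Q'})||g|$ appears with no center attached and no re-expansion is ever needed. The binomial expansion around $\ave{b}_{3Q}$ is applied only to the two \emph{non}-recursive pieces $H_1=\int_{Q\setminus\bigcup Q'}|T_b^m(f\chi_{3Q})||g|$ and $H_2=\sum_{Q'}\int_{Q'}|T_b^m(f\chi_{3Q\setminus 3Q'})||g|$. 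By contrast, you expand first and then split each $T((b-c_0)^{m-k}f\chi_{3Q_0})$; the resulting ``local'' contribution on $P_j$ is $\sum_k\binom{m}{k}\int_{P_j}|b-c_0|^k|T((b-c_0)^{m-k}f\chi_{3P_j})||g|$, which \emph{dominates} $\int_{P_j}|T_b^m(f\chi_{3P_j})||g|$ rather than being dominated by it, so your stated recursive claim does not follow from your steps. The proposed second binomial expansion around $c_{P_j}$ does not obviously close the loop: the cross-terms carry exponents on $(b-c_{P_j})$ that no longer sum to $m$, so they match neither the recursive term nor the boundary term.

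Two further points where the paper proceeds differently. First, the third stopping condition in the paper is not a mean-oscillation bound on $b$, but the $M_{p_0}$ level set for each full product $\eta_k=(b-\ave{b}_{3Q})^k f\chi_{3Q}$; this is exactly what is needed to compare $\ave{|\eta_k|}_{p_0,3Q'}$ with $\ave{|\eta_k|}_{p_0,3Q}$ in the $H_2$ estimate, whereas an $L^1$ stopping on $b$ alone does not control such $L^{p_0}$ averages of products. Second, for $H_2$ the paper does not use a single reference point in $E_{Q_0}$: since $E_{Q_0}$ is disjoint from every $P_j$, $P_j$ is not a cube containing your $x^*$ and $\mathcal{M}^{\#}_{T,q_0}(\cdot)(x^*)$ does not directly control $\operatorname{osc}_{q_0}(\cdot;P_j)$. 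Instead the paper builds, for each stopping cube $Q'$, an auxiliary good set $G_{Q'}\subset Q'$ (via an extra local level set $\tilde\Omega_k$ for $T$ on $3Q'$), writes $\int_{Q'}\ldots=\frac{1}{|G_{Q'}|}\int_{G_{Q'}}\int_{Q'}\ldots$, and adds and subtracts the good-point value $\psi_k(y)$; the difference is then controlled by the oscillation, and $|\psi_k(y)|$ is bounded by combining the $T$-stopping on $3Q$, the local $T$-stopping on $3Q'$, and the $M_{p_0}$-stopping.
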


\begin{lem}\emph{(}\cite[Lemma 3.4]{LLO}\emph{)}
Let $1 \leqslant r,t < \infty$, $m \in \mathbb{N}$. Suppose $f, g \in L_{c}^{\infty}(\mathbb{R}^n), b \in L_{loc}^1(\mathbb{R}^n)$. Fix a cube $Q \in \mathscr{D}$ and for $0 \leqslant k \leqslant m$ define
$$c_k:=\ave{|b-\ave{b}_Q|^{m-k}|f|}_{r,Q}\ave{|b-\ave{b}_Q|^{k}|g|}_{t,Q}.$$
Then we have $c_k \leqslant c_0 + c_m.$
\end{lem}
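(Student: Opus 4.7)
The strategy is a short two-step interpolation, with no expected obstacle beyond matching exponents. Set $\phi := |b - \ave{b}_Q|$ for brevity. The endpoint cases $k = 0$ and $k = m$ are immediate since $c_0,c_m\geq 0$, so the plan is to treat $0 < k < m$ and reduce the claim to the geometric-mean bound $c_k \leq c_0^{(m-k)/m}\,c_m^{k/m}$, followed by Young's inequality.

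First I would apply H\"older's inequality inside each of the two averaged norms defining $c_k$, using the conjugate pair of exponents $\bigl(m/(m-k),\,m/k\bigr)$. The natural splittings are
$$\phi^{r(m-k)}|f|^r = \bigl(\phi^{rm}|f|^r\bigr)^{(m-k)/m}\bigl(|f|^r\bigr)^{k/m}, \qquad \phi^{tk}|g|^t = \bigl(\phi^{tm}|g|^t\bigr)^{k/m}\bigl(|g|^t\bigr)^{(m-k)/m},$$
and taking averages over $Q$ then $1/r$-th and $1/t$-th powers yields
$$\ave{\phi^{m-k}|f|}_{r,Q} \leq \ave{\phi^{m}|f|}_{r,Q}^{(m-k)/m}\ave{|f|}_{r,Q}^{k/m}, \qquad \ave{\phi^{k}|g|}_{t,Q} \leq \ave{\phi^{m}|g|}_{t,Q}^{k/m}\ave{|g|}_{t,Q}^{(m-k)/m}.$$
Multiplying these two estimates and regrouping the four factors according to their exponents $(m-k)/m$ and $k/m$ produces the desired geometric-mean bound $c_k \leq c_0^{(m-k)/m}\,c_m^{k/m}$.

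The second and final step is Young's inequality with the same conjugate pair $\bigl(m/(m-k),\,m/k\bigr)$ applied to $a = c_0^{(m-k)/m}$ and $b = c_m^{k/m}$, which gives
$$c_k \leq \tfrac{m-k}{m}\,c_0 + \tfrac{k}{m}\,c_m \leq c_0 + c_m.$$
The only bookkeeping that requires attention is that the powers of $\phi$ in the two norms defining $c_k$ are \emph{complementary} ($m-k$ and $k$ add to $m$), which is exactly what allows the single pair of H\"older exponents $\bigl(m/(m-k),\,m/k\bigr)$ to handle both factors simultaneously and to pair with the same Young inequality at the end.
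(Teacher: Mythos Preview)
Your proof is correct: the H\"older splitting with exponents $(m/(m-k),m/k)$ yields the log-convexity bound $c_k \le c_0^{(m-k)/m}c_m^{k/m}$, and Young's inequality then gives $c_k \le c_0 + c_m$. The paper does not supply its own proof of this lemma but simply quotes it from \cite{LLO}; your argument is the natural one and matches what one would expect there.
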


\begin{proof}[Proof of Theorem \ref{Sparse Domination-2}]
Suppose the supports of $f$ and $g$ are contained in $Q \in \mathscr{D}$. For $k=0,\cdots,m$, denote
$$\eta_k:=(b-\ave{b}_{3Q})^k f\chi_{3Q}.$$
Now we consider the sets  
$$\Omega_k:=\{x \in Q:|T(\eta_k)(x)|>\varphi_{T,p_0,\alpha}(\frac{1}{(m+1)6^{n+2}})\ave{|\eta_k|}_{p_0,3Q}|3Q|^{\frac{\alpha}{n}}\},$$
and
$$\mathbb{M}_k:=\{x \in Q:|\mathcal{M}_{T,q_0}^{\#}(\eta_k)(x)|>\varphi_{\mathcal{M}_{T,q_0}^{\#},p_0,\alpha}(\frac{1}{(m+1)6^{n+2}})\ave{|\eta_k|}_{p_0,3Q}|3Q|^{\frac{\alpha}{n}}\}.$$
By the locally weak $L^{p_0} \rightarrow L^{\frac{p_0n}{n-\alpha p_0}}$ boundedness of $T$ and $\mathcal{M}_{T,q_0}^{\#}$, we have
$$|\Omega_k| \leqslant \frac{1}{3(m+1)2^{n+2}}|Q|,\quad |\mathbb{M}_k| \leqslant \frac{1}{3(m+1)2^{n+2}}|Q|.$$ 
Since the maximal function $M_{p_0}$ is weak $L^{p_0}$-bounded with constant independent of $p_0$, there exists a $c_{n,m}>0$ such that 
$$M_k:=\{x \in Q: M_{p_0}(\eta_k)(x) > c_{n,m}\ave{\eta_k}_{p_0,3Q}\}$$
also satisfies
$$|M_k| \leqslant \frac{1}{3(m+1)2^{n+2}}|Q|.$$
Therefore, setting
$$\Omega:=\mathop{\cup}\limits_{k=0}^m(\Omega_k \cup \mathbb{M}_k \cup M_k),$$
we have $|\Omega| \leqslant \frac{1}{2^{n+2}}|Q|.$ 
Applying the local Calder\'on-Zygmund decomposition to $\chi_{\Omega}$ at height $\frac{1}{2^{n+1}}$, we obtain a family of pairwise disjoint cubes $\mathcal{S}_Q \subset \mathscr{D}(Q)$ such that $|\Omega \backslash \mathop{\cup}\limits_{Q^{\prime} \in \mathcal{S}_Q}Q^{\prime}|=0$ and for each $Q^{\prime} \in \mathcal{S}_{Q}$,
\begin{equation}{\label{Sparseness}}
\frac{1}{2^{n+1}}|Q^{\prime}| < |Q^{\prime} \cap \Omega| \leqslant \frac{1}{2}|Q^{\prime}|.
\end{equation}
It follows that
\begin{equation}{\label{Sparseness-2}}
\sum\limits_{Q^{\prime} \in \mathcal{S}_Q}|Q^{\prime}| \leqslant 2^{n+1}|\Omega| \leqslant \frac{1}{2}|Q|.
\end{equation}
Meanwhile, we have
\begin{align}
\int_{\mathbb{R}^n}|T_{b}^{m}f||g|\ud x = & \int_{Q}|T_{b}^{m}(f\chi_{3Q})||g|\ud x \nonumber \\
 \leqslant & \int_{Q \backslash \mathop{\bigcup}\limits_{Q^{\prime} \in \mathcal{S}_Q}Q^{\prime}}|T_{b}^{m}(f\chi_{3Q})||g| \ud x \nonumber \\
& + \sum\limits_{Q^{\prime} \in \mathcal{S}_Q}\int_{Q^{\prime}}|T_{b}^{m}(f\chi_{3Q\backslash 3Q^{\prime}})||g| \ud x \nonumber \\
& + \sum\limits_{Q^{\prime} \in \mathcal{S}_Q}\int_{Q^{\prime}}|T_{b}^{m}(f\chi_{3Q^{\prime}})||g| \ud x \nonumber \\
 := & H_1 + H_2 + \sum\limits_{Q^{\prime} \in \mathcal{S}_Q}\int_{Q^{\prime}}|T_{b}^{m}(f\chi_{3Q^{\prime}})||g| \ud x .\nonumber
\end{align}
Our goal is to prove that
\begin{align}{\label{claim}}
H_1 + H_2 \leqslant C\sum\limits_{k=0}^{m}\ave{|b-\ave{b}_{3Q}|^{m-k}|f|}_{p_0,3Q}\ave{|b-\ave{b}_{3Q}|^{k}|g|}_{{q_0}^{\prime},3Q}|Q|^{1+\frac{\alpha}{n}},\end{align}
where C is given by (\ref{Coefficient in Sparse Domination}). For $H_1$, note that for any $c \in \mathbb{C}$, $T_{b}^{m}(f)=T_{b-c}^{m}(f)$. Together with the definition of $\Omega_k$, we have 
\begin{align}
H_1 = & \int_{Q \backslash \mathop{\bigcup}\limits_{Q^{\prime} \in \mathcal{S}_Q}Q^{\prime}}|T_{b}^{m}(f\chi_{3Q})||g| \ud x \nonumber \\
\leqslant & \sum\limits_{k=0}^{m}{m \choose k} \int_{Q \backslash \mathop{\bigcup}\limits_{Q^{\prime} \in \mathcal{S}_Q}Q^{\prime}}|T((b-\ave{b}_{3Q})^{m-k}f\chi_{3Q})||b-\ave{b}_{3Q}|^{k}|g| \ud x \nonumber \\
\leqslant & C_1 \sum\limits_{k=0}^{m}\ave{|b-\ave{b}_{3Q}|^{m-k}|f|}_{p_0,3Q}\ave{|b-\ave{b}_{3Q}|^{k}|g|}_{1,3Q}|Q|^{1+\frac{\alpha}{n}}, \nonumber
\end{align}
where $C_1:=2^{m}3^{2n}\varphi_{T,p_0,\alpha}\Big(\frac{1}{(m+1)6^{n+2}}\Big)$.

For $H_2$, fix $Q^{\prime} \in \mathcal{S}_Q$, for $k=0,1,\cdots,m$, denote
$$\xi_k:=(b-\ave{b}_{3Q})^{m-k}f\chi_{3Q^{\prime}},\quad \psi_k:=T((b-\ave{b}_{3Q})^{m-k}f\chi_{3Q \backslash 3Q^{\prime}}),$$
and we consider the sets
$$\tilde{\Omega}_k:=\{x \in Q^{\prime}:|T(\xi_k)(x)| > \varphi_{T,p_0,\alpha}\Big(\frac{1}{4(m+1)3^{n}}\Big)\ave{|\xi_k|}_{p_0,3Q^{\prime}}|3Q^{\prime}|^{\frac{\alpha}{n}}\}. 
$$
Set $\tilde{\Omega}:=\mathop{\cup}_{k=0}^{m}\tilde{\Omega}_k$, we have $|\tilde{\Omega}| \leqslant \frac{1}{4}|Q^{\prime}|.$ Now, define the good part of the cube $Q^{\prime}$ as 
$$G_{Q^{\prime}}:=Q^{\prime} \backslash (\Omega \cup \tilde{\Omega}).$$
By (\ref{Sparseness}), we have $|G_{Q^{\prime}}| \geqslant \frac{1}{4}|Q^{\prime}|$. Meanwhile, for each $y \in G_{Q^{\prime}}$, by the definition of $\Omega_k$, $M_k$, and $\tilde{\Omega}_k$, we have 
\begin{align}
|\psi_k(y)| \leqslant & |T((b-\ave{b}_{3Q})^{m-k}f\chi_{3Q})| + |T((b-\ave{b}_{3Q})^{m-k}f\chi_{3Q^{\prime}})| \nonumber \\
\leqslant & \varphi_{T,p_0,\alpha}\Big(\frac{1}{(m+1)6^{n+2}}\Big)\ave{|(b-\ave{b}_{3Q})^{m-k}f|}_{p_0,3Q}|3Q|^{\frac{\alpha}{n}} \nonumber \\
& + \varphi_{T,p_0,\alpha}\Big(\frac{1}{4(m+1)3^{n}}\Big)\ave{|(b-\ave{b}_{3Q})^{m-k}f|}_{p_0,3Q^{\prime}}|3Q^{\prime}|^{\frac{\alpha}{n}} \nonumber \\
\leqslant & 2\cdot 3^{n} c_{n,m} \cdot \varphi_{T,p_0,\alpha}\Big(\frac{1}{(m+1)6^{n+2}}\Big)\ave{|(b-\ave{b}_{3Q})^{m-k}f|}_{p_0,3Q}|Q|^{\frac{\alpha}{n}}. \label{upper bound}
\end{align}
Then
\begin{align}
 \int_{Q^{\prime}}|T_{b}^{m}&(f\chi_{3Q \backslash 3Q^{\prime}})||g| \ud x \\
 = &\frac{1}{|G_{Q^{\prime}}|}\int_{G_{Q^{\prime}}}\int_{Q^{\prime}}|T_{b}^{m}(f\chi_{3Q\backslash 3Q^{\prime}})(x)||g(x)| \ud x \ud y \nonumber \\
\leqslant & 2^{m}\frac{1}{|G_{Q^{\prime}}|}\int_{G_{Q^{\prime}}}\int_{Q^{\prime}}\sum\limits_{k=0}^{m}|\psi_k(x)||(b-\ave{b}_{3Q})^{m-k}(x)||g(x)| \ud x \ud y \nonumber \\
\leqslant & 2^{m}\frac{1}{|G_{Q^{\prime}}|}\int_{G_{Q^{\prime}}}\int_{Q^{\prime}}\sum\limits_{k=0}^{m}|\psi_k(x)-\psi_k(y)||(b-\ave{b}_{3Q})^{m-k}(x)||g(x)| \ud x \ud y \nonumber \\
& +   2^{m}\frac{1}{|G_{Q^{\prime}}|}\int_{G_{Q^{\prime}}}\int_{Q^{\prime}}\sum\limits_{k=0}^{m}|\psi_k(y)||(b-\ave{b}_{3Q})^{m-k}(x)||g(x)| \ud x \ud y \nonumber \\
:= &  I_1 + I_2. \nonumber 
\end{align}
For $I_1$, using the H\"{o}lder inequality and the definition of $\mathbb{M}_k$,
\begin{align}
I_1 & \leqslant  4 \cdot 2^{m}\sum\limits_{k=0}^{m}(\frac{1}{|Q^{\prime}|^2}\int_{Q^{\prime}}\int_{Q^{\prime}}|\psi_k(x)-\psi_k(y)|^{q_0} \ud x\ud y)^{\frac{1}{q_0}}\ave{|(b-\ave{b}_{3Q})|^{k}|g|}_{{q_0}^{\prime},Q^{\prime}}|Q^{\prime}| \nonumber \\
& \leqslant  4 \cdot 2^{m}\sum\limits_{k=0}^{m}\operatorname{osc}_{q_0}(T(\eta_{m-k}\chi_{3Q \backslash 3Q^{\prime}});Q^{\prime})\ave{|(b-\ave{b}_{3Q})|^{k}|g|}_{{q_0}^{\prime},Q^{\prime}}|Q^{\prime}| \nonumber \\
& \leqslant \widetilde{C_1}\sum\limits_{k=0}^{m}\ave{|(b-\ave{b}_{3Q})^{m-k}f|}_{p_0,3Q}\ave{|(b-\ave{b}_{3Q})^{m-k}g|}_{{q_0}^{\prime},Q^{\prime}}|Q|^{\frac{\alpha}{n}}|Q^{\prime}|, \nonumber
\end{align}
where $\widetilde{C_1} =  2^{m+2}3^{n} \varphi_{\mathcal{M}_{T,q_0}^{\#},p_0,\alpha}\Big(\frac{1}{(m+1)6^{n+2}}\Big)$. 

We now estimate $I_2$. By (\ref{upper bound}), we have
$$I_2 \leqslant \widetilde{C_2}\sum\limits_{k=0}^{m}\ave{|(b-\ave{b}_{3Q})^{m-k}f|}_{p_0,3Q}\ave{|(b-\ave{b}_{3Q})^{m-k}g|}_{1,Q^{\prime}}|Q|^{\frac{\alpha}{n}}|Q^{\prime}|,
$$
where $\widetilde{C_2} =  2^{m+1}3^{n}c_{n,m} \cdot \varphi_{T,p_0,\alpha}\Big(\frac{1}{(m+1)6^{n+2}}\Big)$. 

Using the H\"{o}lder inequality, for any $q \in [1,\infty)$, we have
$$\sum\limits_{Q^{\prime} \in \mathcal{S}_Q}\ave{|h|}_{q,Q^{\prime}}|Q^{\prime}| \leqslant \ave{|h|}_{q,Q}|Q|.$$
Therefore,
\begin{align}
H_2 & = \sum\limits_{Q^{\prime} \in \mathcal{S}_Q}\int_{Q^{\prime}}|T_{b}^{m}(f\chi_{3Q\backslash 3Q^{\prime}})||g| \ud x \nonumber \\
& \leqslant  C_2 \ave{|(b-\ave{b}_{3Q})^{m-k}f|}_{p_0,3Q}|Q|^{\frac{\alpha}{n}}\sum\limits_{Q^{\prime} \in \mathcal{S}_Q}\ave{|(b-\ave{b}_{3Q})^{m-k}g|}_{{q_0}^{\prime},Q^{\prime}}|Q^{\prime}| \nonumber \\
& \leqslant C_2 \ave{|(b-\ave{b}_{3Q})^{m-k}f|}_{p_0,3Q}\ave{|(b-\ave{b}_{3Q})^{m-k}g|}_{{q_0}^{\prime},3Q}|Q|^{1 + \frac{\alpha}{n}}, \nonumber
\end{align}
where $C_2$ is given by
$$C_2: = C_{n,m}\Big(\varphi_{\mathcal{M}_{T,q_0}^{\#},p_0,\alpha}\Big(\frac{1}{(m+1)6^{n+2}}\Big)+\varphi_{T,p_0,\alpha}\Big(\frac{1}{(m+1)6^{n+2}}\Big)\Big).
$$
So far, by ({\ref{Sparseness-2}}) and ({\ref{claim}}), we have found a $\frac{1}{2}$-sparse family $\mathcal{S}_Q$  satisfying 
\begin{align}
\int_{\mathbb{R}^n}|T_{b}^{m}f||g|\ud x \leqslant & C\sum\limits_{k=0}^{m}\ave{|b-\ave{b}_{3Q}|^{m-k}|f|}_{p_0,3Q}\ave{|b-\ave{b}_{3Q}|^{k}|g|}_{{q_0}^{\prime},3Q}|Q|^{1+\frac{\alpha}{n}} \nonumber \\
& + \sum\limits_{Q^{\prime} \in \mathcal{S}_Q}\int_{Q^{\prime}}|T_{b}^{m}(f\chi_{3Q^{\prime}})||g| \ud x, \nonumber
\end{align}
where $C$ is given by (\ref{Coefficient in Sparse Domination}). By iteration, we get a $\frac{1}{2}$-sparse family $\mathcal{F}$, such that
$$
\int_{\mathbb{R}^n}|T_{b}^{m}f||g|\ud x \leqslant C\sum\limits_{k=0}^{m}\Big(\sum\limits_{Q \in \mathcal{F}}\ave{|b-\ave{b}_{3Q}|^{m-k}|f|}_{p_0,3Q}\ave{|b-\ave{b}_{3Q}|^{k}|g|}_{{q_0}^{\prime},3Q}|Q|^{1+\frac{\alpha}{n}}\Big). 
$$
Now, by applying the "three lattice theorem" to $\mathscr{F}$ and taking the largest one among the $3^n$ dyadic lattices as $\mathcal{S}$, we obtain the desired result. 
\end{proof}

\section{Two-weight estimates for general fractional sparse forms}\label{Sec: shar estimate}

In this section, we provide a two-weight estimate for general fractional sparse forms. As a corollary, we prove a two-weight estimate for operators satisfying the assumptions of Theorem \ref{Sparse Domination}. 

In this paper, we refer to a non-negative locally integrable function $\omega$ as a weight. For $1<p<\infty$, the set $A_p$ is composed of weights that satisfy 
$$[w]_{A_{p}}:=\sup_{Q: \,\text{cube in}\, \mathbb{R}^n} \ave{w}_Q\ave{w^{1-p^{\prime}}}_Q^{p-1}<\infty.
$$
For $p=\infty$, the set $A_\infty$ is composed of weights that satisfy 
$$
[w]_{A_{\infty}}:=\sup_{Q: \,\text{cube in}\, \mathbb{R}^n} \frac{1}{w(Q)} \int_{Q} M\left(w \chi_Q\right),
$$where $M$ denotes the Hardy-Littlewood maximal function. For $1 \leqslant r \leqslant \infty$, the set $\rm{RH}_r$ is composed of weights that satisfy 
$$
[w]_{\rm{RH}_r}:=\sup_{Q: \,\text{cube in}\, \mathbb{R}^n} \frac{\ave{\omega}_{r,Q}}{\ave{\omega}_{Q}}. $$

Now, we define the two-weight $A_{\beta,\gamma}^{\alpha}$ used in the weighted estimates in this paper. Let $\alpha, \beta, \gamma \in \mathbb{R}$, the set $A_{\beta,\gamma}^{\alpha}$ is composed of two-weights $(\omega,\sigma)$ that satisfy 
\begin{equation}{\label{two-weight}}
	[\omega,\sigma]_{A_{\beta,\gamma}^{\alpha}}:= \sup_{Q: \,\text{cube in}\, \mathbb{R}^n}|Q|^{\alpha}\omega(Q)^{\beta} \sigma(Q)^{\gamma} < \infty.
\end{equation}
For any sparse family $\mathcal{S}$, we say that two-weight $(\omega,\sigma)$ belongs to $A_{\beta,\gamma}^{\alpha}(\mathcal{S})$ if 
\begin{equation}
	\sup_{Q \in \mathcal{S}}|Q|^{\alpha}\omega(Q)^{\beta} \sigma(Q)^{\gamma} < \infty \nonumber.
\end{equation}

\begin{rem}
	Only when $\alpha \leqslant 0$ and $ \alpha+\beta+\gamma \geqslant 0$, the two-weight defined above is non-trivial. Indeed, when we consider the cube increasing in size such that $\ell(Q)$ tends to infinity, we obtain $\alpha \leqslant 0$. We then consider the cube shrinking to a fixed point, since the above two-weight constants $A_{\beta,\gamma}^{\alpha}$ can be rewritten as 
	$$[\omega,\sigma]_{A_{\beta,\gamma}^{\alpha}}= \sup_{Q: \,\text{cube in}\, \mathbb{R}^n}|Q|^{\alpha + \beta + \gamma} \ave{\omega}_Q^{\beta} \ave{\sigma}_Q^{\gamma},$$
	by Lebesgue differentiation theorem, we obtain $\alpha+\beta+\gamma \geqslant 0$.
\end{rem}

It should be note that when $\sigma:=\omega^{1-p^{\prime}}$, $\alpha = -p$, $\beta=1$, $\gamma=p-1$, the two-weights defined in (\ref{two-weight}) return to the usual $A_p$ weights.

Throughout this paper, we write $A \lesssim B$ if there exists $C > 0$ (which possibly depends on $n,r,p,q,s,p_0,q_0$ or $T$) such that $A \leqslant C B$, write $A \gtrsim B$ if $A \geqslant C B$, and write $A \simeq B$ if $A \lesssim B$ and $A \gtrsim B$.

Our main result in this section is formulated by the following theorem.
\begin{thm}{\label{weighted estimate}}
Let $1  \leqslant p \leqslant q < s \leqslant \infty$, $r \in (0,p)$ and $\frac{1}{p}-\frac{1}{q} \leqslant \alpha < \frac{1}{r}-\frac{1}{s}$. Suppose $\omega, \sigma$ are two weights and $\mathcal{S} \subset \mathscr{D}$ is a sparse family. Suppose $\mathcal{N}$ is the best constant such that for any $f \in L^{p}(\omega)$, $g \in L^{q^{\prime}}(\sigma)$ the following inequality holds
$$\sum\limits_{Q \in \mathcal{S}}\ave{|f|}_{r,Q}\ave{|g|}_{s^{\prime},Q}{|Q|}^{1+\alpha} \leqslant \mathcal{N}\|f\|_{L^{p}(\omega)}\|g\|_{L^{q^{\prime}}(\sigma)}.
$$
If $\mathcal{N} < \infty$, we have $(\omega^{\frac{r}{r - p}},\sigma^{\frac{s^{\prime}}{s^{\prime} - q^{\prime}}}) \in {A}_{\frac{1}{r}-\frac{1}{p},\frac{1}{q}-\frac{1}{s}}^{\alpha - \frac{1}{r} + \frac{1}{s}}$, and in this case
\begin{equation}{\label{weighted estimate formula}}
1 \leqslant \mathcal{N} \lesssim [u,v]_{{A}_{\frac{1}{r}-\frac{1}{p},\frac{1}{q}-\frac{1}{s}}^{\alpha - \frac{1}{r} + \frac{1}{s}}}
\Bigg\{\begin{aligned} 
&[u]_{A_\infty}^{{1-\frac{1}{(p^\prime)^2}}}[v]_{A_\infty}^{\frac{1}{(p^\prime)^2}} + [u]_{A_\infty}^{\frac{1}{p^2}}[v]_{A_\infty}^{1-\frac{1}{p^2}} & \quad &\text{ if }  p=q, \alpha > 0 \\
&[u]_{A_\infty}^{\frac{1}{q}} + [v]_{A_\infty}^{\frac{1}{p^{\prime}}} & \quad &\text{ otherwise }
\end{aligned} ,
\end{equation}
where $u=\omega^{\frac{r}{r - p}}$, $v=\sigma^{\frac{s^{\prime}}{s^{\prime} - q^{\prime}}}$.
\end{thm}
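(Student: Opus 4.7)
To establish the lower bound $[u,v]_{A_{\beta,\gamma}^{\alpha'}(\mathcal{S})} \le \mathcal{N}$ (with $\alpha' := \alpha - 1/r + 1/s$), I would test the sparse-form inequality on $f = u^{1/r}\chi_{Q_0}$ and $g = v^{1/s'}\chi_{Q_0}$ for every $Q_0 \in \mathcal{S}$. The identities $\omega = u^{(r-p)/r}$ and $\sigma = v^{(s'-q')/s'}$ give $\|f\|_{L^p(\omega)} = u(Q_0)^{1/p}$ and $\|g\|_{L^{q'}(\sigma)} = v(Q_0)^{1/q'}$. Retaining only the $Q_0$-summand on the left side of the hypothesis and using $1+\alpha-1/r-1/s' = \alpha'$ yields $|Q_0|^{\alpha'}u(Q_0)^{\beta}v(Q_0)^{\gamma} \le \mathcal{N}$ after cancellation, which is the desired lower bound and simultaneously shows that $(u,v)$ lies in $A_{\beta,\gamma}^{\alpha'}(\mathcal{S})$.

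\textbf{Change of variables.} For sufficiency, set $F := f\omega^{1/(p-r)}$ and $G := g\sigma^{1/(q'-s')}$. A direct computation shows $\|F\|_{L^p(u)} = \|f\|_{L^p(\omega)}$, $\|G\|_{L^{q'}(v)} = \|g\|_{L^{q'}(\sigma)}$, and the weighted-average identities
$$\ave{|f|}_{r,Q} = \ave{u}_Q^{1/r}\,\ave{|F|}_{r,Q}^{u}, \qquad \ave{|g|}_{s',Q} = \ave{v}_Q^{1/s'}\,\ave{|G|}_{s',Q}^{v}.$$
Plugging these in and using $1/r = \beta + 1/p$, $1/s' = \gamma + 1/q'$ exposes the factor $|Q|^{\alpha'}u(Q)^{\beta}v(Q)^{\gamma}$, which is absorbed into $[u,v]_{A_{\beta,\gamma}^{\alpha'}}$. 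The sufficiency direction is therefore reduced to bounding
$$\mathcal{T} := \sum_{Q \in \mathcal{S}} u(Q)^{1/p} v(Q)^{1/q'}\,\ave{|F|}_{r,Q}^{u}\,\ave{|G|}_{s',Q}^{v} \;\lesssim\; C(u,v)\,\|F\|_{L^p(u)}\,\|G\|_{L^{q'}(v)},$$
where $C(u,v)$ is the mixed $A_\infty$ factor appearing in \eqref{weighted estimate formula}.

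\textbf{Estimating $\mathcal{T}$.} I would run a two-stopping-time principal-cube argument on $\mathcal{S}$: declare $\mathcal{P}\subset\mathcal{S}$ to record doublings of either $\ave{|F|}_{r,\cdot}^u$ or $\ave{|G|}_{s',\cdot}^v$, and group cubes by their least principal ancestor $\pi(Q)=P$. The stopping construction controls both weighted averages at $Q$ by their values at $P$, and
$$\mathcal{T} \lesssim \sum_{P \in \mathcal{P}} \ave{|F|}_{r,P}^{u}\,\ave{|G|}_{s',P}^{v}\,S_P, \qquad S_P := \sum_{Q:\pi(Q)=P} u(Q)^{1/p}v(Q)^{1/q'}.$$
A single H\"older on $\mathcal{T}$ delivers only the product bound $[u]_{A_\infty}^{1/p}[v]_{A_\infty}^{1/q'}$; to reach the stated additive bound $[u]_{A_\infty}^{1/q} + [v]_{A_\infty}^{1/p'}$ in the generic case, I would produce two complementary estimates for $\mathcal{T}$ by balancing H\"older with the Hyt\"onen--P\'erez sparse-to-Carleson inequality $\sum_{Q\in\mathcal{S},\,Q\subset P}u(Q) \lesssim [u]_{A_\infty}u(P)$ (and its $v$-analogue) in two different ways, and take the minimum; passing to the sum uses $\min\{a,b\} \le a+b$. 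The outer sum over $\mathcal{P}$ is closed by a final H\"older and the principal-cubes Carleson embeddings
$$\sum_{P\in\mathcal{P}} u(P)(\ave{|F|}_{r,P}^{u})^{p} \lesssim \|F\|_{L^p(u)}^p, \qquad \sum_{P\in\mathcal{P}} v(P)(\ave{|G|}_{s',P}^{v})^{q'} \lesssim \|G\|_{L^{q'}(v)}^{q'},$$
which are legitimate precisely because $p > r$ and $q' > s'$ (equivalently $q < s$).

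\textbf{The refined case $p=q$, $\alpha>0$.} When $p=q$ the two complementary H\"older pairings collapse, so to obtain the interpolated exponents $1 - 1/(p')^2$, $1/(p')^2$ and their symmetric counterparts I would introduce a free real parameter $\eta$ via the splitting
$$u(Q)^{1/p}v(Q)^{1/q'} = \bigl[u(Q)^{1/p-\eta}v(Q)^{1/q'+\eta}\bigr]\bigl[u(Q)^{\eta}v(Q)^{-\eta}\bigr],$$
rerun the stopping-time/Carleson argument on each factor, and optimize over $\eta$; the quadratic-in-$1/p'$ (resp. $1/p$) exponents emerge from this balance. The main obstacle throughout the proof is precisely this balancing act: one must simultaneously track two stopping times, two sparse-to-Carleson transitions, and---in the refined case---the free parameter $\eta$, all within the feasibility constraints $r < p$ and $q < s$ that keep the weighted maximal estimates and Carleson embeddings in force.
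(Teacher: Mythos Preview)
Your necessity argument is correct and matches the paper's. The sufficiency sketch, however, has a genuine gap: extracting the full power of $[u,v]$ at the first step discards the factor $|Q|^{\alpha-1/r+1/s}$, and the residual form $\mathcal{T}=\sum_{Q}u(Q)^{1/p}v(Q)^{1/q'}\langle|F|\rangle_{r,Q}^{u}\langle|G|\rangle_{s',Q}^{v}$ no longer sees $\alpha$ at all. In particular, the cases $p=q,\ \alpha=0$ and $p=q,\ \alpha>0$ produce the \emph{same} $\mathcal{T}$, yet the theorem claims the additive bound $[u]_{A_\infty}^{1/q}+[v]_{A_\infty}^{1/p'}$ in the first case and the strictly larger product-type bound in the second; your route cannot tell them apart. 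More broadly, once the $|Q|$-power is gone, the inner stopping sum $S_P=\sum_{\pi(Q)=P}u(Q)^{1/p}v(Q)^{1/q'}$ has exponent zero on $|Q|$, so any Carleson/sparse estimate you apply (the Hyt\"onen--P\'erez inequality, or Lemma~\ref{Lemma4}(ii)) necessarily produces a \emph{product} $[u]_{A_\infty}^{a}[v]_{A_\infty}^{b}$ rather than a single factor. Your ``two complementary H\"older pairings'' claim does not explain how to escape this; taking a minimum of two product bounds still gives a product bound, not the additive one.

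The paper avoids this by \emph{not} extracting $[u,v]$ uniformly. It first reduces $\mathcal{N}$ to the two Sawyer-type testing quantities $\zeta+\zeta^{*}$ (Lemmas~\ref{Lemma1}--\ref{Lemma2}), then estimates each via the Cascante--Ortega--Verbitsky Lemma~\ref{Lemma3} and the Fackler--Hyt\"onen Lemma~\ref{Lemma4}. The key move is to pull out $[u,v]^{\delta}$ with a parameter $\delta>1$: this leaves a \emph{positive} power $|Q'|^{(\alpha-1/r+1/s)(1-\delta)}$ in the inner sum, which triggers Lemma~\ref{Lemma4}(i) at \emph{no} $A_\infty$ cost; the lone $A_\infty$ factor then comes only from the outer sum. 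The feasibility constraints on $\delta$ force $\delta=1$ precisely when $p=q$ and $\alpha>0$, which is why that case alone gets the weaker bound. Your extraction corresponds to $\delta=1$ throughout, which is why the argument stalls; the missing idea is over-extraction of the two-weight constant to manufacture a positive $|Q|$-exponent.
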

\begin{rem}
This theorem provides a general framework for the quantitative weights bound for sparse forms. On one hand, it generalizes \cite[Theorem 4.1]{LLO} to the two-weight scenario; on the other hand, it extends \cite[Theorem 1.2]{Li-2017} to the off-diagonal case. Moreover, this estimate can be used to deduce the quantitative weighted bound for some sparse operators. In particular, it covers the main results in \cite{FH}, and we will show the process at the end of this section.
\end{rem}
Combining the above theorem with the case of $m=0$ in Theorem \ref{Sparse Domination}, we can obtain the following corollary.
\begin{cor}{\label{two-weight estimate}}
Let $1 \leqslant p_0 < p \leqslant q  < q_0 \leqslant \infty$, $n(\frac{1}{p}-\frac{1}{q}) \leqslant \alpha < n(\frac{1}{p_0}-\frac{1}{q_0})$. Suppose $T$ is a sublinear operator and both $T$ and $\mathcal{M}_{T,q_0}^{\#}$ are locally weak $L^{p_0} \rightarrow L^{\frac{p_0n}{n-\alpha p_0}}$ bounded. Suppose $(\omega^{\frac{p_0}{p_0 - p}},\sigma^{\frac{q_0^{\prime}}{q_0^{\prime} - q^{\prime}}}) \in {A}_{\frac{1}{p_0}-\frac{1}{p},\frac{1}{q}-\frac{1}{q_0}}^{\f{\alpha}{n} - \frac{1}{p_0} + \frac{1}{q_0}}$ and $\mathcal{N}$ is the best constant such that for every $f,g \in L_{c}^{\infty}(\mathbb{R}^n)$ the following inequality holds
$$ \int_{\mathbb{R}^n}|Tf||g| \ud x \leqslant \mathcal{N}\|f\|_{L^{p}(\omega)}\|g\|_{L^{q^{\prime}}(\sigma)}.$$
Then we have the estimate 
\begin{equation}
\mathcal{N} \lesssim [u,v]_{{A}_{\frac{1}{p_0}-\frac{1}{p},\frac{1}{q}-\frac{1}{q_0}}^{\frac{\alpha}{n} - \frac{1}{p_0} + \frac{1}{q_0}}}
\Bigg\{\begin{aligned} 
&[u]_{A_\infty}^{{1-\frac{1}{(p^\prime)^2}}}[v]_{A_\infty}^{\frac{1}{(p^\prime)^2}} + [u]_{A_\infty}^{\frac{1}{p^2}}[v]_{A_\infty}^{1-\frac{1}{p^2}} & \quad &\text{ if }  p=q, \alpha > 0 \\
&[u]_{A_\infty}^{\frac{1}{q}} + [v]_{A_\infty}^{\frac{1}{p^{\prime}}} & \quad &\text{ otherwise }
\end{aligned} , \nonumber
\end{equation}
where $u=\omega^{\frac{p_0}{p_0 - p}}$, $v=\sigma^{\frac{q_0^{\prime}}{q_0^{\prime} - q^{\prime}}}$.
\end{cor}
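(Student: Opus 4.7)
The plan is to combine the two results already established earlier in the excerpt: the sparse domination criterion of Theorem \ref{Sparse Domination} specialized to the case $m=0$, and the quantitative two-weight estimate for general fractional sparse forms in Theorem \ref{weighted estimate}. No new machinery is needed; the whole argument is a careful parameter match.

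First, apply Theorem \ref{Sparse Domination} with $m=0$. Under the standing hypothesis that $T$ and $\mathcal{M}_{T,q_0}^{\#}$ are locally weak $L^{p_0}\to L^{\frac{p_0 n}{n-\alpha p_0}}$ bounded, for any $f,g\in L_c^\infty(\mathbb{R}^n)$ it produces a dyadic lattice and a $\frac{1}{2\cdot 3^n}$-sparse family $\mathcal{S}$ such that
\begin{equation*}
\int_{\mathbb{R}^n}|Tf|\,|g|\,\ud x \;\leq\; C\sum_{Q\in \mathcal{S}}\ave{|f|}_{p_0,Q}\ave{|g|}_{q_0',Q}|Q|^{1+\alpha/n},
\end{equation*}
where $C$ is the finite constant from \eqref{Coefficient in Sparse Domination}; note that when $m=0$ the two sums in Theorem \ref{Sparse Domination} coincide and collapse to the single sparse form above.

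Next, feed this sparse form into Theorem \ref{weighted estimate} with the specific choice $r=p_0$, $s=q_0$ (so $s'=q_0'$) and with its parameter $\alpha$ taken to be $\alpha/n$. The admissibility conditions $r\in(0,p)$, $p\leq q<s$, and $\frac{1}{p}-\frac{1}{q}\leq \alpha/n<\frac{1}{r}-\frac{1}{s}$ translate precisely into the hypotheses $p_0<p\leq q<q_0$ and $n(\frac{1}{p}-\frac{1}{q})\leq \alpha<n(\frac{1}{p_0}-\frac{1}{q_0})$ of the corollary. The auxiliary weights coming out of the theorem are
\begin{equation*}
u=\omega^{\frac{r}{r-p}}=\omega^{\frac{p_0}{p_0-p}},\qquad v=\sigma^{\frac{s'}{s'-q'}}=\sigma^{\frac{q_0'}{q_0'-q'}},
\end{equation*}
and the two-weight class $A_{\frac{1}{r}-\frac{1}{p},\frac{1}{q}-\frac{1}{s}}^{\alpha/n-\frac{1}{r}+\frac{1}{s}}$ reduces exactly to $A_{\frac{1}{p_0}-\frac{1}{p},\frac{1}{q}-\frac{1}{q_0}}^{\alpha/n-\frac{1}{p_0}+\frac{1}{q_0}}$, matching the definitions stated in the corollary. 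Inserting the resulting bound from Theorem \ref{weighted estimate} into the sparse domination above yields the claimed inequality for $\mathcal{N}$, with the diagonal/off-diagonal branching inherited verbatim from the source theorem.

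Since the substantive harmonic-analytic work is already packaged inside Theorem \ref{Sparse Domination} and Theorem \ref{weighted estimate}, the only obstacle is bookkeeping: verifying that the two sets of parameters $(r,s,\alpha_{\text{thm}})$ and $(p_0,q_0,\alpha/n)$ are compatible and that the two-weight exponents $\frac{1}{r}-\frac{1}{p}$, $\frac{1}{q}-\frac{1}{s}$, $\alpha_{\text{thm}}-\frac{1}{r}+\frac{1}{s}$ reduce to the ones appearing in the corollary. I anticipate no genuine difficulty beyond this check.
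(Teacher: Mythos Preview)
Your proposal is correct and follows exactly the approach indicated in the paper, which simply states that the corollary follows by ``combining the above theorem with the case of $m=0$ in Theorem \ref{Sparse Domination}.'' The parameter match you carry out (setting $r=p_0$, $s=q_0$, and replacing $\alpha$ by $\alpha/n$ in Theorem \ref{weighted estimate}) is precisely what is needed, and nothing further is required.
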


To prove Theorem \ref{weighted estimate}, we need the following lemmas. The first lemma comes from \cite{LLO}, which is a slight generalization of the result in \cite{Li-2017}.

\begin{lem}\emph{(}\cite[Lemma 4.4]{LLO}\emph{)}{\label{Lemma1}}
Let $1 < p \leqslant q < s \leqslant \infty$, $r \in (0,p)$ and $\lambda_Q \geqslant 0$ for any $Q \in \mathscr{D}$. Suppose $\omega, \sigma$ are two weights and $\mathcal{S} \subset \mathscr{D}$ is a sparse family.  Suppose $\mathcal{N}$ is the best constant such that for any $f \in L^{p}(\omega)$, $g \in L^{q^{\prime}}(\sigma)$ the following inequality holds
$$\sum\limits_{Q \in \mathcal{S}}\ave{|f|}_{r,Q}\ave{|g|}_{s^{\prime},Q}\lambda_Q \leqslant \mathcal{N}\|f\|_{L^{p}(\omega)}\|g\|_{L^{q^{\prime}}(\sigma)}.
$$
Denote $u:=\omega^{\frac{r}{r-p}}$, $v:=\sigma^{\frac{s^{\prime}}{s^{\prime}-q^{\prime}}}$, set
$$\tau_Q:=\ave{u}_{Q}^{\frac{1}{r}-1}\ave{v}_Q^{-\frac{1}{s}}\frac{\lambda_Q}{|Q|},\quad Q \in \mathscr{D}$$
and for $R \in \mathscr{D}$ define 
$$T_Rf:=\sum\limits_{\substack{Q \in \mathcal{S} \\ Q \subset R}} \tau_Q \ave{f}_Q\chi_Q. $$
Then
\begin{equation}{\label{best constant estimate}}
\mathcal{N} \simeq \zeta + \zeta^{*} := \sup_{R \in \mathcal{S}} \frac{\|T_R(u)\|_{L^{q}(v)}}{{u(R)}^{1/p}} + \sup_{R \in \mathcal{S}} \frac{\|T_R(v)\|_{L^{p^{\prime}}(u)}}{{v(R)}^{1/q^{\prime}}}.
\end{equation} 
\end{lem}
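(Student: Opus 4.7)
The plan is to realize $\mathcal{N}$ as the operator norm of an explicit bilinear form on weighted spaces and to verify the two sides of the equivalence by duality and by a two-weight Sawyer/Carleson argument respectively.

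The normalization step: substituting $f = \tilde f\, u^{1/r}$ and $g = \tilde g\, v^{1/s'}$ gives bijections $L^p(\omega) \leftrightarrow L^p(u)$ and $L^{q'}(\sigma) \leftrightarrow L^{q'}(v)$ with $\|f\|_{L^p(\omega)} = \|\tilde f\|_{L^p(u)}$ and $\|g\|_{L^{q'}(\sigma)} = \|\tilde g\|_{L^{q'}(v)}$ (using $\omega = u^{(r-p)/r}$ and $\sigma = v^{(s'-q')/s'}$). Moreover $\ave{|f|}_{r,Q} = \ave{u}_Q^{1/r}\, \ave{|\tilde f|}^u_{r,Q}$ and $\ave{|g|}_{s',Q} = \ave{v}_Q^{1/s'}\, \ave{|\tilde g|}^v_{s',Q}$, so using the definition of $\tau_Q$ (which absorbs the factors $\ave{u}_Q^{1/r}\ave{v}_Q^{1/s'}$), the sparse form equals
$$B(\tilde f,\tilde g) := \sum_{Q \in \mathcal{S}} \tau_Q\, \frac{u(Q)v(Q)}{|Q|}\, \ave{|\tilde f|}^u_{r,Q}\,\ave{|\tilde g|}^v_{s',Q}.$$
Hence $\mathcal{N}$ is exactly the norm of $B \colon L^p(u) \times L^{q'}(v) \to \mathbb{R}$.

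For the easy direction $\zeta + \zeta^{*} \lesssim \mathcal{N}$, I would test with $\tilde f = \chi_R$ for $R \in \mathcal{S}$: then $\ave{|\tilde f|}^u_{r,Q} = 1$ for every $Q \subset R$ and $\|\tilde f\|_{L^p(u)} = u(R)^{1/p}$. By $L^q(v)$--$L^{q'}(v)$ duality and positivity of $T_R u$, picking $h \geq 0$ with $\|h\|_{L^{q'}(v)}=1$,
$$\int T_R u \cdot h\, v = \sum_{Q \subset R,\, Q \in \mathcal{S}} \tau_Q \ave{u}_Q \int_Q hv \leq \sum_{Q \subset R} \tau_Q \ave{u}_Q v(Q) \ave{h}^v_{s',Q} = B(\chi_R, h) \leq \mathcal{N}\, u(R)^{1/p},$$
the first inequality being Hölder with exponents $s,s'$ on $\int_Q hv$. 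Taking the supremum in $h$ yields $\|T_R u\|_{L^q(v)} \leq \mathcal{N}\, u(R)^{1/p}$, so $\zeta \leq \mathcal{N}$; the bound $\zeta^{*} \leq \mathcal{N}$ is obtained symmetrically by testing with $\tilde g = \chi_R$.

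The main obstacle is the reverse inequality $\mathcal{N} \lesssim \zeta + \zeta^{*}$, which I plan to prove via a parallel corona decomposition in the spirit of Sawyer's two-weight theory. Construct principal families $\mathcal{F}$ and $\mathcal{G}$ by $2$-stopping on the averages $\ave{|\tilde f|}^u_{r,\cdot}$ (against $u$) and $\ave{|\tilde g|}^v_{s',\cdot}$ (against $v$) respectively; both are automatically sparse. Each $Q \in \mathcal{S}$ sits in a unique corona $\mathcal{C}(F,G)$, where $F = \pi_{\mathcal{F}}(Q)$ and $G = \pi_{\mathcal{G}}(Q)$, on which $\ave{|\tilde f|}^u_{r,Q} \leq 2\ave{|\tilde f|}^u_{r,F}$ and $\ave{|\tilde g|}^v_{s',Q} \leq 2\ave{|\tilde g|}^v_{s',G}$. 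Splitting $B(\tilde f,\tilde g)$ according to $F \subset G$ or $G \subsetneq F$ and pulling out the stopping values, the first piece restricted to $\mathcal{C}(F,G)$ reduces to the testing integral $\sum_{Q \in \mathcal{C}(F,G)} \tau_Q \ave{u}_Q v(Q) \leq \int T_F u \cdot \chi_G\, v \leq \zeta\, u(F)^{1/p} v(G)^{1/q'}$. Summing over $(F,G)$ via Hölder and the Carleson embeddings $\sum_F u(F)(\ave{|\tilde f|}^u_{r,F})^p \lesssim \|\tilde f\|_{L^p(u)}^p$ and $\sum_G v(G)(\ave{|\tilde g|}^v_{s',G})^{q'} \lesssim \|\tilde g\|_{L^{q'}(v)}^{q'}$ (which follow from $u$-sparseness of $\mathcal{F}$, $v$-sparseness of $\mathcal{G}$, and the $L^p(u)$-, $L^{q'}(v)$-boundedness of the weighted maximal operators $\operatorname{M}_{r,u}$ and $\operatorname{M}_{s',v}$, valid since $r<p$ and $s'<q'$) gives $\lesssim \zeta\, \|\tilde f\|_{L^p(u)} \|\tilde g\|_{L^{q'}(v)}$. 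The second piece contributes $\zeta^{*}$ by symmetry. The delicate part is the dual Hölder pairing between the two coronas and the correct handling of the configurations $F \subset G$ and $G \subsetneq F$, but this is a standard adaptation of the Sawyer testing machinery to the non-integer exponents $(r,p,q,s')$ here.
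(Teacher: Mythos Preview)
The paper does not supply its own proof of this lemma; it is quoted verbatim from \cite[Lemma~4.4]{LLO} (which in turn refines \cite{Li-2017}), so there is nothing in the present paper to compare against. Your proposal follows exactly the standard route used in that reference: the change of variables $f=\tilde f\,u^{1/r}$, $g=\tilde g\,v^{1/s'}$ identifies $\mathcal N$ with the norm of the bilinear form $B$ on $L^p(u)\times L^{q'}(v)$; the lower bound $\zeta+\zeta^\ast\lesssim\mathcal N$ is obtained by testing on $\tilde f=\chi_R$ (resp.\ $\tilde g=\chi_R$) and dualizing, as you wrote; and the upper bound is a parallel-corona Sawyer argument with stopping families $\mathcal F,\mathcal G$ for $\tilde f$ in $L^p(u)$ and $\tilde g$ in $L^{q'}(v)$, using the Carleson embeddings coming from the $L^p(u)$-boundedness of $\mathrm{M}_{r,u}$ and the $L^{q'}(v)$-boundedness of $\mathrm{M}_{s',v}$ (valid precisely because $r<p$ and $s'<q'$).

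One small point worth tightening in your sketch of the hard direction: in the configuration $F\subset G$ one has $G=\pi_{\mathcal G}(F)$, so the double sum over pairs $(F,G)$ collapses to a single sum over $F$; after bounding $\sum_{Q\in\mathcal C(F,G)}\tau_Q\langle u\rangle_Q v(Q)\le\|T_Fu\|_{L^q(v)}\,v(F)^{1/q'}\le\zeta\,u(F)^{1/p}v(F)^{1/q'}$ one applies H\"older in $F$ and then the Carleson embedding for $\mathcal G$ on the $g$-factor (using that the $F$'s with a fixed $\pi_{\mathcal G}(F)=G$ are pairwise disjoint inside $G$). This is the ``delicate pairing'' you allude to, and it goes through without difficulty. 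Apart from this bookkeeping your argument is correct and matches the literature.
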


In the case of $p=1$, we need the following lemma.

\begin{lem}{\label{Lemma2}}
Let $1 \leqslant q < s \leqslant \infty$, $r \in (0,1)$ and $\lambda_Q \geqslant 0$ for any $Q \in \mathscr{D}$. Suppose $\omega, \sigma$ are two weights and $\mathcal{S} \subset \mathscr{D}$ is a sparse family.  Suppose $\mathcal{N}$ is the best constant such that for any $f \in L^{1}(\omega)$, $g \in L^{q^{\prime}}(\sigma)$ the following inequality holds
$$\sum\limits_{Q \in \mathcal{S}}\ave{|f|}_{r,Q}\ave{|g|}_{s^{\prime},Q}\lambda_Q \leqslant \mathcal{N}\|f\|_{L^{1}(\omega)}\|g\|_{L^{q^{\prime}}(\sigma)}.
$$
Then
\begin{equation}{\label{best constant estimate2}}
\mathcal{N} \lesssim \zeta = \sup_{R \in \mathcal{S}} \frac{\|T_R(u)\|_{L^{q}(v)}}{{u(R)}},
\end{equation} 
where $u=\omega^{\frac{r}{r-1}}$, $v,T_R$ are defined in Lemma {\ref{Lemma1}}.
\end{lem}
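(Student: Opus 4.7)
The plan is to adapt the proof of Lemma \ref{Lemma1} to the endpoint $p=1$, where the dual testing constant $\zeta^{*}$ degenerates (its role being played by $p^{\prime}=\infty$) and only the forward testing constant $\zeta$ remains. The argument will proceed in three stages.

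First, I would reduce the sparse form by a Jensen-type substitution. Setting $\phi:=fu^{-1/r}$ and using the identity $u^{(r-1)/r}=\omega$, one has $\|\phi\|_{L^{1}(u)}=\|f\|_{L^{1}(\omega)}$. Jensen's inequality for the concave function $t\mapsto t^{r}$ applied to the probability measure $u(Q)^{-1}u\,dx$ on each cube $Q$ yields the cube-wise bound
\[
\ave{f}_{r,Q}\le\langle u\rangle_{Q}^{1/r}\ave{\phi}_{Q,u}.
\]
Recalling the definition of $\tau_{Q}$, this gives
\[
\sum_{Q\in\mathcal{S}}\ave{f}_{r,Q}\ave{g}_{s^{\prime},Q}\lambda_Q \le \sum_{Q\in\mathcal{S}}\tau_Q\langle u\rangle_Q \langle v\rangle_Q^{1/s}\ave{\phi}_{Q,u}\ave{g}_{s^{\prime},Q}|Q|.
\]

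Second, I would run a principal-cube stopping-time for $\phi$ in the measure $u\,dx$: let $\mathcal{P}_{0}$ be the maximal cubes of $\mathcal{S}$, and inductively let $\mathcal{P}_{j+1}$ be the maximal cubes $Q\subsetneq P\in\mathcal{P}_{j}$ with $\ave{\phi}_{Q,u}>2\ave{\phi}_{P,u}$. By Chebyshev the resulting family $\mathcal{P}=\bigcup_{j}\mathcal{P}_{j}$ is $u$-sparse, and $\ave{\phi}_{Q,u}\le 2\ave{\phi}_{P,u}$ whenever $P=\pi(Q)$ is the smallest principal cube containing $Q$. Grouping by principal cubes gives
\[
\sum_{Q\in\mathcal{S}}\ave{f}_{r,Q}\ave{g}_{s^{\prime},Q}\lambda_Q \lesi \sum_{P\in\mathcal{P}}\ave{\phi}_{P,u}\sum_{\substack{Q\in\mathcal{S}\\\pi(Q)=P}}\tau_Q\langle u\rangle_Q\langle v\rangle_Q^{1/s}\ave{g}_{s^{\prime},Q}|Q|.
\]

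Third---the crucial step---I would bound the inner sum by $\|T_{P}u\|_{L^{q}(v)}\,\|g\chi_{P}\|_{L^{q^{\prime}}(\sigma)}\le\zeta\,u(P)\,\|g\chi_{P}\|_{L^{q^{\prime}}(\sigma)}$ and then sum over $P\in\mathcal{P}$. The key observation is that an additional H\"older step (using $s^{\prime}<q^{\prime}$ and $\sigma=v^{(s^{\prime}-q^{\prime})/s^{\prime}}$) converts $\ave{g}_{s^{\prime},Q}\langle v\rangle_Q^{1/s}$ to an average of $\psi:=g(\sigma/v)^{1/q^{\prime}}$ in $L^{q^{\prime}}(v)$, for which $\|\psi\|_{L^{q^{\prime}}(v)}=\|g\|_{L^{q^{\prime}}(\sigma)}$. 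Pairing the resulting expression with the coefficients $\tau_Q\langle u\rangle_Q$ that define $T_Pu$, and using the duality $(L^{q}(v))^{*}=L^{q^{\prime}}(v)$ together with the $u$-sparseness of $\mathcal{P}$ and a Carleson-type embedding, yields
\[
\sum_{P\in\mathcal{P}}\ave{\phi}_{P,u}\,u(P)\,\|g\chi_{P}\|_{L^{q^{\prime}}(\sigma)} \lesi \|\phi\|_{L^{1}(u)}\|g\|_{L^{q^{\prime}}(\sigma)}=\|f\|_{L^{1}(\omega)}\|g\|_{L^{q^{\prime}}(\sigma)},
\]
which completes the proof.

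The main obstacle lies in Step 3. In the case $p>1$ treated by Lemma \ref{Lemma1}, the argument is symmetric: one runs parallel stopping-times for both $\phi$ and $\psi$ and obtains both testing constants $\zeta$ and $\zeta^{*}$. At $p=1$ the dual testing degenerates, so only one stopping-time is available, and the $\psi$-side must be absorbed directly into the duality pairing against $T_{P}u$ rather than by a parallel stopping-time. Tracking the exponents through the various H\"older applications and verifying that the final constant is exactly $\zeta$ (and not something larger involving additional weight-quantities) is the subtle technical point.
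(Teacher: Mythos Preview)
Your overall architecture---a single stopping-time on the $f$-side and a duality pairing against $T_{P}u$ on the $g$-side---matches the paper's, but the way you linearize the $r$-average creates a genuine gap in the final summation.

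The problematic step is the claimed Carleson embedding
\[
\sum_{P\in\mathcal P}\ave{\phi}_{P,u}\,u(P)\;\lesssim\;\|\phi\|_{L^{1}(u)}.
\]
This is \emph{false} for principal cubes built on $L^{1}$-averages. Take $u$ equal to Lebesgue measure on $[0,1]$ and $\phi=2^{N}\chi_{[0,2^{-N}]}$; then $\|\phi\|_{L^{1}}=1$, while the principal cubes (threshold $2$) are $[0,4^{-j}]$ for $0\le j\lesssim N/2$, each contributing $\ave{\phi}_{P}|P|=1$, so the sum is $\sim N$. The point is that the dyadic maximal function $M_{u}$ is not bounded on $L^{1}(u)$, so no $L^{1}$-Carleson embedding is available for $L^{1}$-stopping data. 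Your Jensen step $\ave{f}_{r,Q}\le\ave{u}_{Q}^{1/r}\ave{\phi}_{Q,u}$ is exactly where the crucial $r<1$ information is thrown away.

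The paper avoids this by \emph{not} applying Jensen: it keeps the $L^{r}$-average $\ave{\phi}_{r,Q}^{u}$ (in your notation this equals $\ave{u}_{Q}^{-1/r}\ave{f}_{r,Q}$) and runs the stopping-time on that quantity. The final sum is then
\[
\sum_{F\in\mathscr F}\ave{\phi}_{r,F}^{u}\,u(F)\;\lesssim\;\|M_{r,u}\phi\|_{L^{1}(u)}\;\lesssim\;\|\phi\|_{L^{1}(u)},
\]
and the last inequality holds precisely because $r<1$ makes $M_{r,u}=(M_{u}(|\cdot|^{r}))^{1/r}$ bounded on $L^{1}(u)$ (since $M_{u}$ is bounded on $L^{1/r}(u)$ with $1/r>1$). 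So the fix is simple: drop Step~1, stop on $\ave{\phi}_{r,Q}^{u}$ instead of $\ave{\phi}_{Q,u}$, and carry the exponent $r$ through to the end.

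A smaller imprecision: in Step~3, H\"older alone does not convert $\ave{g}_{s',Q}$ into an $L^{1}(v)$-average pairable with $T_{P}u$; one needs the pointwise bound $\ave{\psi}_{s',Q}^{v}\le\ave{M_{s',v}\psi}_{Q}^{v}$ followed by the $L^{q'}(v)$-boundedness of $M_{s',v}$ (valid since $s'<q'$). The paper handles this separately by first reducing $\mathcal N$ to an auxiliary constant $\mathcal N'$ with $\ave{|g|}_{Q}^{v}$ in place of $\ave{|g|}_{s',Q}^{v}$.
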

\begin{proof}
According to the definitions of $u$ and $v$, $\mathcal{N}$ is the best constant satisfying the following estimate:
$$\sum\limits_{Q \in \mathcal{S}}\ave{|f|}_{r,Q}\ave{|g|}_{s^{\prime},Q}\lambda_Q \leqslant \mathcal{N}\||f|^{r}u^{-1}\|_{L^{1/r}(u)}^{1/r}\||g|^{s^{\prime}}v^{-1}\|_{L^{q^{\prime}/s^{\prime}}(v)}^{1/s^{\prime}},
$$
which is equivalent to
$$\sum\limits_{Q \in \mathcal{S}}\ave{|f|}_{r,Q}^{u}\ave{|g|}_{s^{\prime},Q}^{v} \lambda_Q \ave{u}_{Q}^{\frac{1}{r}}\ave{v}_{Q}^{\frac{1}{s^{\prime}}} \leqslant \mathcal{N}\|f\|_{L^{1}(u)} \|g\|_{L^{q^{\prime}}(v)}.
$$

Suppose $\mathcal{N}^{\prime}$ is the best constant such that for any $f \in L^{1}(u)$, $g \in L^{q^{\prime}}(v)$ the following inequality holds
$$\sum\limits_{Q \in \mathcal{S}}\ave{|f|}_{r,Q}^{u} \ave{|g|}_{Q}^{v} \lambda_Q \ave{u}_{Q}^{\frac{1}{r}}\ave{v}_{Q}^{\frac{1}{s^{\prime}}} \leqslant \mathcal{N}^{\prime}\|f\|_{L^{1}(u)} \|g\|_{L^{q^{\prime}}(v)}.
$$
It is not difficult to verify that $\mathcal{N}$ and $\mathcal{N}^{\prime}$ are comparable. Indeed, the inequality $\mathcal{N}^{\prime} \leqslant \mathcal{N}$ comes directly from the H\"{o}lder inequality. On the other hand, since
\begin{align}
\sum\limits_{Q \in \mathcal{S}}\ave{|f|}_{r,Q}^{u}\ave{|g|}_{s^{\prime},Q}^{v}  \lambda_Q \ave{u}_{Q}^{\frac{1}{r}}\ave{v}_{Q}^{\frac{1}{s^{\prime}}} 
\leqslant & \sum\limits_{Q \in \mathcal{S}}\ave{|f|}_{r,Q}^{u} \ave{\operatorname{M}_{s^{\prime},v}(g)}_{Q}^{v} \lambda_Q \ave{u}_{Q}^{\frac{1}{r}}\ave{v}_{Q}^{\frac{1}{s^{\prime}}} \nonumber \\
\leqslant &\, \mathcal{N}^{\prime}\|f\|_{L^{1}(u)} \|\operatorname{M}_{s^{\prime},v}(g)\|_{L^{q^{\prime}}(v)} \nonumber \\
\lesssim &\, \mathcal{N}^{\prime}\|f\|_{L^{1}(u)} \|g\|_{L^{q^{\prime}}(v)}, \nonumber
\end{align}
we have $\mathcal{N} \lesssim \mathcal{N}^{\prime}$. As a result, we only need to prove estimate (\ref{best constant estimate2}) for constant $\mathcal{N}^{\prime}$. 

To this end, we will use the stopping time argument, which was introduced by Li and Sun in \cite{LS}, and further improved by Dimi\'an, Hormozi, and Li in \cite{DHL}. Without loss of generality, we may assume that the sparse family $\mathcal{S}$ has a maximal cube $Q_0$. We construct the stopping time family $\mathscr{F}$ inductively. Let $\mathscr{F}_0 := \{Q_0\}$ and
$$\mathscr{F}_{k} := \bigcup_{F \in \mathscr{F}_{k-1}}\{F^{\prime} \subset F : F^{\prime} \text{\, is the maximal cube in\,} \mathcal{S} \text{\,satisfying\,} \ave{|f|}_{r,F^{\prime}}^{u} > 2 \ave{|f|}_{r,F}^{u}\}.$$ 
Then the stopping time family is defined by $\mathscr{F}:=\bigcup_{k=0}^ {\infty}\mathscr{F}_k$. It is easy to deduce from the above construction that  
\begin{equation}
\sum\limits_{F \in \mathscr{F}}\ave{|f|}_{r,F}^{u}u(F) \lesssim \|\operatorname{M}_{r,u}(f)\|_{L^{1}\left(u\right)} \lesssim \left\|f\right\|_{L^{1}\left(u\right)}. \nonumber
\end{equation}
We use $\pi_{\mathscr{F}}(Q)$ to represent the stopping parents of $Q$, that is, the minimal cube containing $Q$ in $\mathscr{F}$. According to the definition, we have $\ave{|f|}_{r,Q}^{u} \leqslant 2\ave{|f|}_{r,\pi_{\mathscr{F}}(Q)}^{u}$.

Now, let's return to the estimate of $\mathcal{N}^{\prime}$. Since
\begin{align}
\sum\limits_{Q \in \mathcal{S}}\ave{|f|}_{r,Q}^{u}\ave{|g|}_{Q}^{v} \lambda_Q \ave{u}_{Q}^{\frac{1}{r}}\ave{v}_{Q}^{\frac{1}{s^{\prime}}}
\lesssim & \sum\limits_{F \in \mathscr{F}}\ave{|f|}_{r,F}^{u}\sum_{\substack{Q \in \mathcal{S} \\ \pi_{\mathscr{F}}(Q)= F }}\ave{|g|}_{Q}^{v} \lambda_Q \ave{u}_{Q}^{\frac{1}{r}}\ave{v}_{Q}^{\frac{1}{s^{\prime}}} \nonumber \\
\leqslant & \sum\limits_{F \in \mathscr{F}}\ave{|f|}_{r,F}^{u}\int_{\mathbb{R}^n}T_{F}(u)|g|v \ud x \nonumber \\
\leqslant & \sum\limits_{F \in \mathscr{F}}\ave{|f|}_{r,F}^{u}\|T_{F}(u)\|_{L^{q}(v)}\|g\|_{L^{q^{\prime}}(v)} \nonumber \\
\leqslant & \, \zeta \sum\limits_{F \in \mathscr{F}}\ave{|f|}_{r,F}^{u}u(F)\|g\|_{L^{q^{\prime}}(v)} \nonumber \\
\leqslant & \, \zeta \|f\|_{L^{1}(u)} \|g\|_{L^{q^{\prime}}(v)},\nonumber
\end{align}
we obtain estimate (\ref{best constant estimate2}) for constant $\mathcal{N}^{\prime}$.
\end{proof}

In the following, we will estimate the right hand terms of (\ref{best constant estimate}) and (\ref{best constant estimate2}), and we need another two lemmas.
\begin{lem}\emph{(}\cite[Proposition 2.2]{COV}\emph{)}{\label{Lemma3}}
Let $p \in [1,\infty)$ and $\lambda_Q \geqslant 0$ for any $Q \in \mathscr{D}$. Suppose $\omega$ is a weight. Then 
$$\|\sum\limits_{Q \in \mathscr{D}}\lambda_Q\chi_Q\|_{L^{p}(\omega)} \simeq \Big(\sum\limits_{Q \in \mathscr{D}}\lambda_Q\Big(\frac{1}{\omega(Q)}\sum\limits_{\substack{Q^{\prime} \in \mathscr{D} \\ Q^{\prime} \subset Q}}\lambda_{Q^{\prime}}\omega(Q^{\prime})\Big)^{p-1}\omega(Q)\Big)^{1/p}.
$$ 
\end{lem}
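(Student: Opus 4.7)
The plan is to denote $F := \sum_{Q \in \mathscr{D}} \lambda_Q \chi_Q$ and $S_Q := \omega(Q)^{-1}\sum_{R \in \mathscr{D},\, R \subseteq Q} \lambda_R \omega(R)$, so that the right-hand side in the claimed equivalence equals $\sum_Q \lambda_Q S_Q^{p-1}\omega(Q)$. A useful first observation is that $S_Q\,\omega(Q) = \int_Q F_Q\,\omega$, where $F_Q := \sum_{R \subseteq Q} \lambda_R \chi_R$; equivalently $S_Q = \ave{F_Q}_Q^{\omega}$, the $\omega$-average of the truncation $F_Q$ over $Q$. I would prove the two inequalities in the equivalence separately.

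For the easy direction $\|F\|_{L^p(\omega)}^p \gtrsim \sum_Q \lambda_Q S_Q^{p-1}\omega(Q)$, the point is that $F_Q \le F$ pointwise on $Q$, hence $S_Q \le \ave{F}_Q^\omega \le \operatorname{M}_{1,\omega}F(x)$ for every $x \in Q$. Consequently
\[
\sum_{Q \in \mathscr{D}} \lambda_Q S_Q^{p-1}\,\omega(Q) \;\le\; \sum_{Q \in \mathscr{D}} \lambda_Q \int_Q (\operatorname{M}_{1,\omega}F)^{p-1}\,\omega \;=\; \int_{\mathbb{R}^n} F\,(\operatorname{M}_{1,\omega}F)^{p-1}\,\omega,
\]
by Fubini. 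Hölder's inequality with exponents $p$ and $p'$ together with Doob's inequality (the $L^p(\omega)$-boundedness of the dyadic $\omega$-weighted maximal function with constant depending only on $p$) then yield $\lesssim \|F\|_{L^p(\omega)}^p$.

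For the opposite direction $\|F\|_{L^p(\omega)}^p \lesssim \sum_Q \lambda_Q S_Q^{p-1}\omega(Q)$, I would first obtain a pointwise upper bound for $F^p$ by telescoping. For each $x$, enumerate the dyadic cubes containing $x$ as $Q_1(x) \supsetneq Q_2(x) \supsetneq \cdots$, set $T_Q := \sum_{R \supseteq Q} \lambda_R$, and put $T_{Q_0(x)} := 0$. Since $F(x) = \lim_{i \to \infty} T_{Q_i(x)}$, the elementary estimate $a^p - b^p \le p\, a^{p-1}(a-b)$ valid for $a \ge b \ge 0$ combined with the telescoping identity gives
\[
F(x)^p \;=\; \sum_{i \ge 1}\bigl(T_{Q_i(x)}^p - T_{Q_{i-1}(x)}^p\bigr) \;\le\; p \sum_{Q \ni x} \lambda_Q\, T_Q^{p-1}.
\]
Integrating against $\omega$ and exchanging the order of summation by Fubini reduces the matter to the reversal inequality
\[
\sum_{Q \in \mathscr{D}} \lambda_Q\, T_Q^{p-1}\,\omega(Q) \;\lesssim_p\; \sum_{Q \in \mathscr{D}} \lambda_Q\, S_Q^{p-1}\,\omega(Q).
\]

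This reversal inequality is the main obstacle. For $p = 2$ it is an identity, since both sides equal $\sum_{Q \subseteq R} \lambda_Q \lambda_R\, \omega(Q)$ by swapping the order of summation. For general $p \in (1,\infty)$ I would establish it via a principal-cubes (stopping-time) decomposition of $\mathscr{D}$: declare $R$ a stopping descendant of $P$ when $R$ is maximal with $T_R > 2\,T_P$, so that on each stopping region $T_Q \simeq T_P$; the local contribution to the left-hand side is then controlled by $T_P^{p-1}$ times the weighted descendant total $\sum_{Q \subseteq P} \lambda_Q \omega(Q) = S_P\, \omega(P)$, which is further bounded by $S_P^{p-1}\,\omega(P)$ via a Carleson-embedding-type argument; summing the resulting geometric series across generations yields the claim. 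This is essentially a discrete Wolff-potential estimate and is the heart of the Cascante--Ortega--Verbitsky argument.
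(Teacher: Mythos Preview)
The paper does not prove this lemma; it is quoted from Cascante--Ortega--Verbitsky without argument, so there is nothing in the paper to compare your proof against.

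On its own merits: your easy direction via the weighted dyadic maximal function is correct for $p>1$, and $p=1$ is a trivial identity. Your reduction of the hard direction, via the pointwise telescoping $F(x)^p\le p\sum_{Q\ni x}\lambda_Q T_Q^{p-1}$, to the reversal inequality
\[
\sum_{Q}\lambda_Q\,T_Q^{\,p-1}\,\omega(Q)\;\lesssim_p\;\sum_{Q}\lambda_Q\,S_Q^{\,p-1}\,\omega(Q)
\]
is also correct, and you rightly identify this Wolff-type estimate as the heart of the matter.

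The sketch you give for that reversal, however, is not a proof. After stopping on $T_Q$ one arrives at $\sum_{P\in\mathcal F}T_P^{\,p-1}S_P\,\omega(P)$, and the claim that each summand ``is further bounded by $S_P^{\,p-1}\omega(P)$ via a Carleson-embedding-type argument'' is unjustified: the stopping construction on $T_Q$ gives no comparison between $T_P$ and $S_P$, nor any Carleson packing of $\{\omega(P)\}_{P\in\mathcal F}$, so neither the termwise bound nor the ``geometric series across generations'' is available. One workable route for $p\ge 2$ is instead to telescope $T_Q^{\,p-1}$ once more, swap the order of summation to introduce a factor $S_R$, and close by H\"older with conjugate exponents $\tfrac{p-1}{p-2}$ and $p-1$, obtaining the self-improving inequality $A\lesssim A^{(p-2)/(p-1)}B^{1/(p-1)}$ and hence $A\lesssim B$; the range $1<p<2$ then needs a separate (dual or direct) treatment. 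Your overall architecture is right, but this final step must be supplied in full rather than sketched.
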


\begin{lem}\emph{(}\cite[Lemma 4.2]{FH}\emph{)}{\label{Lemma4}}
Let $\alpha, \beta, \gamma \geqslant 0$ with $\alpha + \beta +\gamma \geqslant 1$. Suppose $\omega, \sigma$ are two weights and $\mathcal{S} \subset \mathscr{D}$ is a sparse family. For a cube $R$, the following are hold true.

(i) In the case of $\alpha > 0$, one has the universal estimate
$$ \sum\limits_{\substack{Q \in \mathcal{S} \\ Q \subset R}} |Q|^{\alpha}\sigma(Q)^{\beta}\omega(Q)^{\gamma} \lesssim |R|^{\alpha}\sigma(R)^{\beta}\omega(R)^{\gamma}.
$$

(ii) In the case of $\alpha = 0$, one still has the weaker inequality
$$ \sum\limits_{\substack{Q \in \mathcal{S} \\ Q \subset R}} \sigma(Q)^{\beta}\omega(Q)^{\gamma} \lesssim [\sigma]_{A_\infty}^{\beta}[\omega]_{A_\infty}^{\gamma}\sigma(R)^{\beta}\omega(R)^{\gamma}.
$$
\end{lem}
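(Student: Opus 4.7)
The plan is to reduce both parts to a single-weight identity that requires no $A_\infty$ hypothesis: for any $\tau > 0$, any weight $\mu$, and any sparse $\mathcal{S} \subset \mathscr{D}$,
\begin{equation*}
\sum_{\substack{Q \in \mathcal{S}\\ Q \subset R}}|Q|^{\tau}\mu(Q) = \int_R \mu(x)\sum_{\substack{Q \in \mathcal{S},\ Q \subset R\\ Q \ni x}}|Q|^{\tau}\,\ud x \leq C_{n,\tau}|R|^{\tau}\mu(R),
\end{equation*}
the inner sum being controlled because cubes of $\mathscr{D}$ containing a fixed $x$ form a chain with at most one cube per dyadic scale, so it collapses to the geometric series $\sum_{k \geq 0}(2^{-kn}|R|)^{\tau}$. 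I would also invoke Kolmogorov's inequality $\int_R M(\mu\chi_R)^{p}\leq C_p|R|^{1-p}\mu(R)^{p}$ for $p \in (0,1)$ and the well-known sparse $A_\infty$ estimate $\sum_{Q \in \mathcal{S}, Q \subset R}\mu(Q) \lesssim [\mu]_{A_\infty}\mu(R)$.

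For part (i) I would split the analysis according to the value of $\beta + \gamma$. If $\beta + \gamma < 1$ (which forces $\alpha \geq 1-\beta-\gamma > 0$), I factor $|Q|^{\alpha} = |Q|^{\alpha-1+\beta+\gamma}\cdot |Q|^{1-\beta-\gamma}$, dominate the first piece by $|R|^{\alpha-1+\beta+\gamma}$, rewrite the rest as $|Q|\langle\sigma\rangle_Q^{\beta}\langle\omega\rangle_Q^{\gamma}$, and apply sparseness together with disjointness of the $\{E_Q\}$ to obtain
\begin{equation*}
\sum_Q |Q|^{\alpha}\sigma(Q)^{\beta}\omega(Q)^{\gamma} \lesssim |R|^{\alpha-1+\beta+\gamma}\int_R M(\sigma\chi_R)^{\beta}M(\omega\chi_R)^{\gamma}\,\ud x,
\end{equation*}
which closes to $|R|^{\alpha}\sigma(R)^{\beta}\omega(R)^{\gamma}$ via Hölder with exponents $p,p'$ satisfying $\beta p < 1,\ \gamma p' < 1$ (choosable because $\beta + \gamma < 1$) followed by Kolmogorov on each factor. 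If $\beta + \gamma = 1$, I split $\alpha = \alpha_1 + \alpha_2$ with $\alpha_1,\alpha_2 > 0$ and apply Hölder on the sum with exponents $1/\beta,1/\gamma$:
\begin{equation*}
\sum_Q |Q|^{\alpha}\sigma(Q)^{\beta}\omega(Q)^{\gamma} \leq \Big(\sum_Q |Q|^{\alpha_1/\beta}\sigma(Q)\Big)^{\beta}\Big(\sum_Q |Q|^{\alpha_2/\gamma}\omega(Q)\Big)^{\gamma},
\end{equation*}
bounding each factor by the single-weight identity above. If $\beta + \gamma > 1$, set $\beta' = \beta/(\beta+\gamma)$ and $\gamma' = \gamma/(\beta+\gamma)$, and use $\sigma(Q)^{\beta-\beta'}\omega(Q)^{\gamma-\gamma'}\leq \sigma(R)^{\beta-\beta'}\omega(R)^{\gamma-\gamma'}$ to peel off the excess and reduce to the previous case.

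For part (ii), $\alpha = 0$ removes the decay factor $|Q|^{\alpha}$, so the splitting device above is unavailable. After the same peeling reduction to $\beta + \gamma = 1$, I would apply Hölder with exponents $1/\beta, 1/\gamma$ and dominate each factor by the sparse $A_\infty$ estimate, which produces the required $[\sigma]_{A_\infty}^{\beta}[\omega]_{A_\infty}^{\gamma}$ loss.

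The main obstacle I anticipate is the endpoint case $\beta + \gamma = 1$ inside part (i): the Kolmogorov route degenerates there (no admissible pair of Hölder exponents survives), and a careless argument would drag the $A_\infty$ constants into the bound. The device of splitting $\alpha$ into two strictly positive pieces, so that each weight receives its own decay factor, is precisely where the hypothesis $\alpha > 0$ earns its keep, and it is what separates part (i) from part (ii).
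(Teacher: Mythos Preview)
The paper does not supply its own proof of this lemma: it is quoted verbatim as \cite[Lemma 4.2]{FH} and used as a black box inside the proof of Theorem~\ref{weighted estimate}. Consequently there is no ``paper's proof'' to compare against; what matters is whether your argument is correct, and it is. Your three-case treatment of part~(i) (Kolmogorov via sparseness when $\beta+\gamma<1$; splitting $\alpha=\alpha_1+\alpha_2$ and H\"older with exponents $1/\beta,1/\gamma$ when $\beta+\gamma=1$; peeling the excess when $\beta+\gamma>1$) is exactly the sort of argument one expects, and your identification of the endpoint $\beta+\gamma=1$ as the place where $\alpha>0$ is genuinely needed is right on target. For part~(ii) your reduction to $\beta+\gamma=1$ (the hypothesis $\alpha=0$, $\alpha+\beta+\gamma\geq 1$ forces $\beta+\gamma\geq 1$ automatically, so there is no subcase $\beta+\gamma<1$ to worry about) followed by H\"older and the sparse Carleson estimate $\sum_{Q\subset R}\mu(Q)\lesssim[\mu]_{A_\infty}\mu(R)$ is also correct, and in fact yields the slightly better exponents $\beta'=\beta/(\beta+\gamma)$, $\gamma'=\gamma/(\beta+\gamma)$ on the $A_\infty$ constants. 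One cosmetic remark: in the $\beta+\gamma=1$ case you should note separately that if $\beta=0$ or $\gamma=0$ the claim is the single-weight identity itself, since the H\"older step with exponents $1/\beta,1/\gamma$ is vacuous at those endpoints.
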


\begin{proof}[Proof of Theorem \ref{weighted estimate}]
For the sake of simplicity in writing, in this proof we use $[u,v]$ to represent the two-weight constant $[u,v]_{{A}_{\frac{1}{r}-\frac{1}{p},\frac{1}{q}-\frac{1}{s}}^{\alpha - \frac{1}{r} + \frac{1}{s}}}$.

We first prove the necessity, for each $Q \in \mathcal{S}$,
\begin{align}
|Q|^{\alpha-\frac{1}{r}+\frac{1}{s}}u(Q)^{\frac{1}{r}-\frac{1}{p}}v(Q)^{\frac{1}{q}-\frac{1}{s}} & = u(Q)^{-\frac{1}{p}}v(Q)^{\frac{1}{q}} \ave{\omega^{\frac{1}{r-p}}\chi_Q}_{r,Q}\ave{\sigma^{\frac{1}{s^{\prime}-q^{\prime}}}\chi_Q}_{s^{\prime},Q}|Q|^{1+\alpha} \nonumber \\
& \leqslant u(Q)^{-\frac{1}{p}}v(Q)^{\frac{1}{q}} \sum\limits_{Q \in \mathcal{S}}\ave{\omega^{\frac{1}{r-p}}\chi_Q}_{r,Q}\ave{\sigma^{\frac{1}{s^{\prime}-q^{\prime}}}\chi_Q}_{s^{\prime},Q}|Q|^{1+\alpha} \nonumber \\
& \leqslant \mathcal{N} u(Q)^{-\frac{1}{p}}v(Q)^{\frac{1}{q}}\|\omega^{\frac{1}{r-p}}\chi_Q\|_{L^{p}(\omega)}\|\sigma^{\frac{1}{s^{\prime}-q^{\prime}}}\chi_Q\|_{L^{q^{\prime}}(\sigma)} \nonumber \\
& = \mathcal{N}. \nonumber
\end{align}
The above estimate implies $(u, v) \in {A}_{\frac{1}{r}-\frac{1}{p},\frac{1}{q}-\frac{1}{s}}^{\alpha - \frac{1}{r} + \frac{1}{s}}$ and the lower bound in ({\ref{weighted estimate formula}}). To prove the upper bound, we use Lemma {\ref{Lemma1}} and Lemma {\ref{Lemma2}} to obtain
$$\mathcal{N} \simeq \zeta + \zeta^{*}, $$
in the case of $1<p<\infty$, and 
$$\mathcal{N} \lesssim \zeta, $$
in the case of $p=1$, where
$$\zeta = \sup_{R \in \mathcal{S}} \frac{1}{{u(R)}^{1/p}}\Big\|\sum\limits_{\substack{Q \in \mathcal{S} \\ Q \subset R}} \ave{u}_{Q}^{\frac{1}{r}}\ave{v}_Q^{-\frac{1}{s}}|Q|^{\alpha}\chi_Q\Big\|_{L^{q}(v)},$$
$$\zeta^{*} = \sup_{R \in \mathcal{S}} \frac{1}{{v(R)}^{1/q^{\prime}}}\Big\|\sum\limits_{\substack{Q \in \mathcal{S} \\ Q \subset R}}\ave{u}_{Q}^{\frac{1}{r}-1}\ave{v}_Q^{\frac{1}{s^{\prime}}}|Q|^{\alpha}\chi_Q\Big\|_{L^{p^{\prime}}(u)}.$$
For $\zeta$, using Lemma {\ref{Lemma3}}, we have 
$$\zeta \simeq \sup_{R \in \mathcal{S}} \frac{1}{{u(R)}^{1/p}}\Big(\sum\limits_{\substack{Q \in \mathcal{S} \\ Q \subset R}}\ave{u}_{Q}^{\frac{1}{r}}\ave{v}_Q^{-\frac{1}{s}}|Q|^{\alpha}\Big(\frac{1}{v(Q)}\sum\limits_{\substack{Q^{\prime} \in \mathcal{S} \\ Q^{\prime} \subset Q}}\ave{u}_{Q^{\prime}}^{\frac{1}{r}}\ave{v}_{Q^{\prime}}^{-\frac{1}{s}}|{Q^{\prime}}|^{\alpha}v(Q^{\prime})\Big)^{q-1}v(Q)\Big)^{1/q}.$$
Now, we estimate the inner sum
\begin{align}
&\sum\limits_{\substack{Q^{\prime} \in \mathcal{S} \\ Q^{\prime} \subset Q}}\ave{u}_{Q^{\prime}}^{\frac{1}{r}}\ave{v}_{Q^{\prime}}^{-\frac{1}{s}}|{Q^{\prime}}|^{\alpha}v(Q^{\prime}) \nonumber \\
 = & \sum\limits_{\substack{Q^{\prime} \in \mathcal{S} \\ Q^{\prime} \subset Q}}u(Q^{\prime})^{\frac{1}{r}}v(Q^{\prime})^{\frac{1}{s^{\prime}}}|{Q^{\prime}}|^{\alpha-\frac{1}{r}+\frac{1}{s}}\nonumber \\
\lesssim & [u,v]^{\delta}\sum\limits_{\substack{Q^{\prime} \in \mathcal{S} \\ Q^{\prime} \subset Q}} u(Q^{\prime})^{\frac{1}{r}-\delta(\frac{1}{r}-\frac{1}{p})}v(Q^{\prime})^{\frac{1}{s^{\prime}}-\delta(\frac{1}{q}-\frac{1}{s})}|Q^{\prime}|^{(\alpha-\frac{1}{r}+\frac{1}{s})(1-\delta)}, \nonumber
\end{align} 
where $\delta$ satisfies the following four conditions:
\begin{align}
\frac{1}{r}-\delta(\frac{1}{r}-\frac{1}{p}) \geqslant 0 &\Leftrightarrow \delta \leqslant \frac{p}{p - r}, \nonumber \\
\frac{1}{s^{\prime}}-\delta(\frac{1}{q}-\frac{1}{s}) \geqslant 0 &\Leftrightarrow \delta \leqslant \frac{(s - 1) q}{s - q}, \nonumber \\
(\alpha-\frac{1}{r}+\frac{1}{s})(1-\delta) \geqslant 0 &\Leftrightarrow \delta \geqslant 1, \nonumber \\
1 + \alpha - \delta(\alpha-\frac{1}{p}+\frac{1}{q}) \geqslant 1 &\Leftrightarrow \alpha \geqslant \delta(\alpha-\frac{1}{p}+\frac{1}{q}). \nonumber
\end{align}
Note that when $p=q, \alpha > 0$ or $p=q=1$ and only $\delta=1$ satisfies the above four conditions,  we can only apply (ii) in Lemma \ref{Lemma4}. Otherwise, we can choose $\delta>1$, so that (i) in Lemma \ref{Lemma4} can be applied. In summary, we have
\begin{equation}
\sum\limits_{\substack{Q^{\prime} \in \mathcal{S} \\ Q^{\prime} \subset Q}}\ave{u}_{Q^{\prime}}^{\frac{1}{r}}\ave{v}_{Q^{\prime}}^{-\frac{1}{s}}|{Q^{\prime}}|^{\alpha}v(Q^{\prime}) \lesssim \zeta_{inn}u(Q)^{\frac{1}{r}-\delta(\frac{1}{r}-\frac{1}{p})}v(Q)^{\frac{1}{s^{\prime}}-\delta(\frac{1}{q}-\frac{1}{s})}|Q|^{(\alpha-\frac{1}{r}+\frac{1}{s})(1-\delta)}, \label{inner sum}
\end{equation}
where
$$\zeta_{inn}=[u,v]^{\delta} \cdot \Big\{\begin{aligned} 
&[u]_{A_\infty}^{\frac{1}{p}}[v]_{A_\infty}^{\frac{1}{q^{\prime}}} & \quad &\text{ if }  p=q, \alpha > 0 \, \text{or} \, p=q=1\\
&1 & \quad &\text{ otherwise }
\end{aligned}.$$
Here, we additionally require $\delta$ to satisfy $q \geqslant \delta (q-1)$. Then we have 
\begin{align}
&\sum\limits_{\substack{Q \in \mathcal{S} \\ Q \subset R}}\ave{u}_{Q}^{\frac{1}{r}}\ave{v}_Q^{-\frac{1}{s}}|Q|^{\alpha}\Big(\frac{1}{v(Q)}\sum\limits_{\substack{Q^{\prime} \in \mathcal{S} \\ Q^{\prime} \subset Q}}\ave{u}_{Q^{\prime}}^{\frac{1}{r}}\ave{v}_{Q^{\prime}}^{-\frac{1}{s}}|{Q^{\prime}}|^{\alpha}v(Q^{\prime})\Big)^{q-1}v(Q)  \nonumber \\
\lesssim & \zeta_{inn}^{q-1}\sum\limits_{\substack{Q \in \mathcal{S} \\ Q \subset R}}u(Q)^{\frac{q}{p}+ (\frac{1}{r}-\frac{1}{p})(q-\delta(q-1))}v(Q)^{
(\frac{1}{q}-\frac{1}{s})(q-\delta(q-1))}|Q|^{(\alpha-\frac{1}{r}+\frac{1}{s})(q-\delta(q-1))} \nonumber \\
\leqslant & \zeta_{inn}^{q-1} [u,v]^{q-\delta(q-1)} \sum\limits_{\substack{Q \in \mathcal{S} \\ Q \subset R}}u(Q)^{\frac{q}{p}} \nonumber \\
\leqslant & \zeta_{inn}^{q-1} [u,v]^{q-\delta(q-1)} u(R)^{\frac{q}{p}-1}\sum\limits_{\substack{Q \in \mathcal{S} \\ Q \subset R}}u(Q) \nonumber \\
\leqslant & \zeta_{inn}^{q-1} [u,v]^{q-\delta(q-1)} [u]_{A_\infty}u(R)^{\frac{q}{p}},\nonumber 
\end{align}
where in last inequality we used (ii) in Lemma \ref{Lemma4}. Integrating the above estimates together, we have 
\begin{align}
\zeta &\lesssim [u,v]\cdot[u]_{A_\infty}^{\frac{1}{q}}\Bigg\{\begin{aligned} 
&([u]_{A_\infty}^{\frac{1}{p}}[v]_{A_\infty}^{\frac{1}{q^{\prime}}})^{\frac{q-1}{q}} & \quad &\text{ if }  p=q, \alpha > 0 \\
&1 & \quad &\text{ otherwise }
\end{aligned} \nonumber\\
&= [u,v]\cdot\Bigg\{\begin{aligned} 
&[u]_{A_\infty}^{{1-\frac{1}{(p^\prime)^2}}}[v]_{A_\infty}^{\frac{1}{(p^\prime)^2}} & \quad &\text{ if }  p=q, \alpha > 0 \\
&[u]_{A_\infty}^{\frac{1}{q}} & \quad &\text{ otherwise }
\end{aligned}. \label{zeta}
\end{align}

In the case of $1<p<\infty$, we use the same method for $\zeta^{*}$. Note that the inner sum of $\zeta$ and $\zeta^{*}$ are the same. So we can directly apply (\ref{inner sum}) to obtain
\begin{align}
& \Big\|\sum\limits_{\substack{Q \in \mathcal{S} \\ Q \subset R}}\ave{u}_{Q}^{\frac{1}{r}-1}\ave{v}_Q^{\frac{1}{s^{\prime}}}|Q|^{\alpha}\chi_Q \Big \|_{L^{p^{\prime}}(u)} \nonumber \\
\simeq & \Big(\sum\limits_{\substack{Q \in \mathcal{S} \\ Q \subset R}}\ave{u}_{Q}^{\frac{1}{r}-1}\ave{v}_Q^{\frac{1}{s^{\prime}}}|Q|^{\alpha}\Big(\frac{1}{u(Q)}\sum\limits_{\substack{Q^{\prime} \in \mathcal{S} \\ Q \subset Q^{\prime}}}\ave{u}_{Q^{\prime}}^{\frac{1}{r}-1}\ave{v}_{Q^{\prime}}^{\frac{1}{s^{\prime}}}|{Q^{\prime}}|^{\alpha}u(Q^{\prime})\Big)^{p^{\prime}-1}u(Q)\Big)^{1/{p^{\prime}}} \nonumber \\
\lesssim & [u,v] \cdot [v]_{A_{\infty}}^{1/{p^{\prime}}}v(R)^{\frac{1}{q^{\prime}}} \Bigg\{\begin{aligned} 
&([u]_{A_\infty}^{\frac{1}{p}}[v]_{A_\infty}^{\frac{1}{q^{\prime}}})^{\frac{p^{\prime}-1}{p^{\prime}}} & \quad &\text{ if }  p=q, \alpha > 0 \\
&1 & \quad &\text{ otherwise }
\end{aligned}. \nonumber 
\end{align}
Therefore, 
\begin{align}
\zeta^{*} &\lesssim [u,v][v]_{A_{\infty}}^{1/{p^{\prime}}}\Bigg\{\begin{aligned} 
&([u]_{A_\infty}^{\frac{1}{p}}[v]_{A_\infty}^{\frac{1}{q^{\prime}}})^{\frac{p^{\prime}-1}{p^{\prime}}} & \quad &\text{ if }  p=q, \alpha > 0 \\
&1 & \quad &\text{ otherwise }
\end{aligned} \nonumber\\
&= [u,v]\cdot\Bigg\{\begin{aligned} 
&[u]_{A_\infty}^{\frac{1}{p^2}}[v]_{A_\infty}^{1-\frac{1}{p^2}} & \quad &\text{ if }  p=q, \alpha > 0 \\
&[v]_{A_\infty}^{\frac{1}{p^{\prime}}} & \quad &\text{ otherwise }
\end{aligned}. \label{zeta*}
\end{align}
Combining estimates (\ref{zeta}) and (\ref{zeta*}) together, we obtain the desired result.
\end{proof}

At the end of this section, we will show that the main result in \cite{FH} can be deduced from our results. We define the sparse operators $A_{\mathcal{S}}^{r,\alpha}$ as follows:
$$A_{\mathcal{S}}^{r,\alpha}(f):=\Big(\sum\limits_{Q \in \mathcal{S}}\Big(|Q|^{-\alpha}\int_{Q}f)^{r}\chi_Q\Big)^{\frac{1}{r}}.$$ 
\begin{cor}\emph{(}\cite[Theorem 1.1]{FH}\emph{)}
Let $1 < p \leqslant q < \infty, 0 < r < \infty$, and $0 < \alpha \leqslant 1$. Let $\omega,\sigma \in A_{\infty}$ be two weights. Then $A_{\mathcal{S}}^{r,\alpha}(\cdot \sigma)$ maps $L^{p}(\sigma) \rightarrow L^{q}(\omega)$ if and only if $(\omega,\sigma) \in A_{\frac{1}{q},\frac{1}{p^{\prime}}}^{-\alpha}$, and in this case
\begin{equation}{\label{previous result}}
\begin{aligned}
1 & \leqslant \frac{\|A_{\mathcal{S}}^{r,\alpha}(\cdot\sigma)\|_{L^{p}(\sigma) \rightarrow L^{q}(\omega)}}{[\omega,\sigma]_{A_{\frac{1}{q},\frac{1}{p^{\prime}}}^{-\alpha}}} \nonumber \\
& \lesssim \Bigg\{\begin{aligned} 
&[\omega]_{A_\infty}^{\frac{1}{r}(1-\frac{r}{p})^2}[\sigma]_{A_\infty}^{\frac{1}{r}(1-(1-\frac{r}{p})^2)}+[\omega]_{A_\infty}^{\frac{1}{r}(1-(\frac{r}{p})^2)}[\sigma]_{A_\infty}^{\frac{1}{r}(\frac{r}{p})^2} & \quad &\text{ if }   p=q > r, \alpha < 1 \\
&[\omega]_{A_\infty}^{(\frac{1}{r}-\frac{1}{p})_{+}} + [\sigma]_{A_\infty}^{\frac{1}{q}} & \quad &\text{ otherwise } 
\end{aligned}, \nonumber 
\end{aligned}
\end{equation}
where $x_{+}:=\max (x,0)$ in the exponent.
\end{cor}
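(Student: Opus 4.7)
The plan is to deduce this corollary from Theorem \ref{weighted estimate} by converting the operator bound on $A_{\mathcal{S}}^{r,\alpha}$ into an equivalent bilinear sparse form and applying the theorem with appropriately chosen parameters. Assume first $r<p\le q$; the boundary cases $r\ge p$ are handled at the end by the elementary pointwise monotonicity $A_{\mathcal{S}}^{r,\alpha}(f)\le A_{\mathcal{S}}^{s,\alpha}(f)$ for $s\le r$ and standard two-weight bounds for the fractional sparse maximal operator. Using $L^{q/r}$--$(L^{q/r})'$ duality together with the identity $\ave{f\sigma}_{1,Q}^{\,r}=\ave{(f\sigma)^{r}}_{1/r,Q}$, one obtains
\[
\|A_{\mathcal{S}}^{r,\alpha}(f\sigma)\|_{L^{q}(\omega)}^{r}\;=\;\sup_{\phi\ge 0,\,\|\phi\|_{L^{(q/r)'}(\omega)}\le 1}\,\sum_{Q\in\mathcal{S}}\ave{F}_{1/r,Q}\ave{G}_{1,Q}\,|Q|^{1+r(1-\alpha)},
\]
where $F:=(f\sigma)^{r}$ and $G:=\phi\omega$. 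A direct computation confirms $\|F\|_{L^{p/r}(\sigma^{1-p})}=\|f\|_{L^{p}(\sigma)}^{r}$ and $\|G\|_{L^{(q/r)'}(\omega^{1-(q/r)'})}=\|\phi\|_{L^{(q/r)'}(\omega)}$, so the sparse form is tested against the correct function spaces.

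Theorem \ref{weighted estimate} is then applied to this sparse form with the parameters $r_{\mathrm{thm}}=1/r$, $s_{\mathrm{thm}}=\infty$, $p_{\mathrm{thm}}=p/r$, $q_{\mathrm{thm}}=q/r$, $\alpha_{\mathrm{thm}}=r(1-\alpha)$, and weights $\omega_{\mathrm{thm}}=\sigma^{1-p}$, $\sigma_{\mathrm{thm}}=\omega^{1-(q/r)'}$. A short calculation shows that the theorem's canonical weights collapse to $u=\omega_{\mathrm{thm}}^{r_{\mathrm{thm}}/(r_{\mathrm{thm}}-p_{\mathrm{thm}})}=\sigma$ and $v=\sigma_{\mathrm{thm}}^{s_{\mathrm{thm}}'/(s_{\mathrm{thm}}'-q_{\mathrm{thm}}')}=\omega$, while the two-weight condition $(u,v)\in A^{-r\alpha}_{r/p',\,r/q}$ is equivalent (after taking an $r$-th root of the underlying supremum) to $(\omega,\sigma)\in A^{-\alpha}_{1/q,\,1/p'}$. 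Moreover, the case dichotomy ``$p_{\mathrm{thm}}=q_{\mathrm{thm}}$ and $\alpha_{\mathrm{thm}}>0$'' becomes ``$p=q>r$ and $\alpha<1$'', and the exponents in \eqref{weighted estimate formula} translate via $1/p_{\mathrm{thm}}^{2}=(r/p)^{2}$ and $1/(p_{\mathrm{thm}}')^{2}=(1-r/p)^{2}$. Taking $1/r$-th roots of both sides, using $(A+B)^{1/r}\le C_{r}(A^{1/r}+B^{1/r})$ to distribute, produces exactly the four exponents $\tfrac{1}{r}(1-r/p)^{2}$, $\tfrac{1}{r}(1-(1-r/p)^{2})$, $\tfrac{1}{r}(r/p)^{2}$, $\tfrac{1}{r}(1-(r/p)^{2})$ in the first branch of the corollary and $\tfrac{1}{q}$, $(\tfrac{1}{r}-\tfrac{1}{p})_{+}$ in the ``otherwise'' branch.

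The main obstacle will be Step 1: accurately converting the $\ell^{r}$-structure of $A_{\mathcal{S}}^{r,\alpha}$ into a sparse bilinear form of exactly the shape accepted by Theorem \ref{weighted estimate}, with the substitution $F=(f\sigma)^{r}$, $G=\phi\omega$ arranged so that both norms and weights align. A secondary bookkeeping issue is the boundary range $r\ge p$: since Theorem \ref{weighted estimate} implicitly requires $p_{\mathrm{thm}}>1$ for the dual testing estimate $\zeta^{*}$, one must reduce this range to the main case by applying the main argument for a fixed $s<p$ with $s\le r$, exploiting $A_{\mathcal{S}}^{r,\alpha}\le A_{\mathcal{S}}^{s,\alpha}$ and observing that the exponent $(1/r-1/p)_{+}=0$ in this range. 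Finally, the necessity part (the lower bound $1\le \mathcal{N}/[\omega,\sigma]_{A^{-\alpha}_{1/q,1/p'}}$) is immediate from testing the operator on $f=\sigma\chi_{Q}$ and $g=\omega\chi_{Q}$ for a single cube $Q$, exactly as in the necessity direction of Theorem \ref{weighted estimate}.
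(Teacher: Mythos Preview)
Your main argument for the range $r<p$ is correct and follows the paper's proof essentially line by line: the duality step, the substitutions $F=(f\sigma)^r$, $G=\phi\omega$, the identification of parameters $r_{\mathrm{thm}}=1/r$, $s_{\mathrm{thm}}=\infty$, $p_{\mathrm{thm}}=p/r$, $q_{\mathrm{thm}}=q/r$, $\alpha_{\mathrm{thm}}=r(1-\alpha)$, and the collapse $u=\sigma$, $v=\omega$ all match the paper exactly, as does the translation of exponents and the case dichotomy.

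There is, however, a genuine gap in your handling of the boundary range $r\ge p$. You claim that Theorem~\ref{weighted estimate} ``implicitly requires $p_{\mathrm{thm}}>1$'', and therefore propose to reduce to $A_{\mathcal{S}}^{s,\alpha}$ with $s<p$. But this is not right on either count. First, Theorem~\ref{weighted estimate} explicitly allows $p_{\mathrm{thm}}=1$: this is precisely the content of Lemma~\ref{Lemma2}, which replaces the dual testing condition $\zeta^{*}$ by the single testing bound $\mathcal{N}\lesssim\zeta$ when $p_{\mathrm{thm}}=1$. Second, your proposed reduction to $s<p$ would produce the exponent $(1/s-1/p)>0$ on $[\omega]_{A_\infty}$, not the target exponent $(1/r-1/p)_{+}=0$; a limiting argument $s\to p^{-}$ is not justified because the implicit constants in Theorem~\ref{weighted estimate} depend on the parameters. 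The paper instead uses the pointwise bound $A_{\mathcal{S}}^{r,\alpha}\le A_{\mathcal{S}}^{p,\alpha}$ to reduce directly to the case $s=p$, i.e.\ $p_{\mathrm{thm}}=1$, and then applies Theorem~\ref{weighted estimate} at that endpoint to obtain the desired bound with $[\omega]_{A_\infty}^{0}+[\sigma]_{A_\infty}^{1/q}$. Your argument is easily repaired by making this same choice.
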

\begin{proof}
The proofs of necessity and the lower bound of (\ref{previous result}) are the same as  ones in Theorem \ref{weighted estimate} and hence we omit the details here. For the upper bound, we first consider the case when $r \leqslant p$. In this case, we can apply duality argument to obtain
\begin{align}
\|A_{\mathcal{S}}^{r,\alpha}(f\sigma)\|_{L^{q}(\omega)} & = \Big\|\sum\limits_{Q \in \mathcal{S}}|Q|^{-\alpha r}\Big(\int_{Q}f\sigma \Big)^{r}\chi_Q \Big\|_{L^{q/r}(\omega)}^{1/r} \nonumber\\
& =  \sup_{\|g\|_{L^{(q/r)^{\prime}}(\omega)} \leqslant 1} \Big(\sum\limits_{Q \in \mathcal{S}}\ave{(f\sigma)^{r}}_{\frac{1}{r},Q}\ave{g\omega}_{1,Q}|Q|^{1+ r -\alpha r}\Big)^{\frac{1}{r}} \nonumber\\
& \leqslant \mathcal{N}^{\frac{1}{r}}\sup_{\|g\|_{L^{(q/r)^{\prime}}(\omega)} \leqslant 1}\Big(\|(f\sigma)^{r}\|_{L^{p/r}(\sigma^{1-p})}\|g\omega\|_{L^{(q/r)^{\prime}}(\omega^{1-(q/r)^{\prime}})}\Big)^{\frac{1}{r}} \nonumber\\
& = \mathcal{N}^{\frac{1}{r}}\|f\|_{L^{p}(\sigma)}, \nonumber
\end{align}
where $\mathcal{N}$ is the best constant such that for any $f \in L^{p/r}(\sigma^{1-p}), g\in L^{(q/r)^{\prime}}(\omega^{1-(q/r)^{\prime}})$ the following inequality holds
$$ \sum\limits_{Q \in \mathcal{S}}\ave{f}_{\frac{1}{r},Q}\ave{g}_{1,Q}|Q|^{1+ r -\alpha r} \leqslant \mathcal{N}\|f\|_{L^{p/r}(\sigma^{1-p})}\|g\|_{L^{(q/r)^{\prime}}(\omega^{1-(q/r)^{\prime}})}.$$
By Theorem \ref{weighted estimate}, we have
\begin{equation}
\mathcal{N} \lesssim [\sigma,\omega]_{A_{r-r/p,r/q}^{-\alpha r}}\Bigg\{\begin{aligned} 
&[\sigma]_{A_\infty}^{1-(1-\frac{r}{p})^2}[\omega]_{A_\infty}^{(1-\frac{r}{p})^2}+[\sigma]_{A_\infty}^{(\frac{r}{p})^2}[\omega]_{A_\infty}^{1-(\frac{r}{p})^2} &\quad &\text{ if }  p=q , \alpha < 1 \\
&[\sigma]_{A_\infty}^{\frac{r}{q}} + [\omega]_{A_\infty}^{(1-\frac{r}{p})_{+}}   & \quad & \text{otherwise}
\end{aligned}. \nonumber 
\end{equation}
Then the desired result can be obtained by noting $\|A_{\mathcal{S}}^{r,\alpha}(\cdot\sigma)\|_{L^{p}(\sigma) \rightarrow L^{q}(\omega)} \leqslant  \mathcal{N}^{\frac{1}{r}}$. 

In the case of $r > p$, it suffices to prove 
$$
\|A_{\mathcal{S}}^{r,\alpha}(f\sigma)\|_{L^{q}(\omega)} \lesssim [\omega,\sigma]_{A_{\frac{1}{q},\frac{1}{p^{\prime}}}^{-\alpha}}[\sigma]_{A_{\infty}}^{\frac{1}{q}}\|f\|_{L^{p}(\sigma)}.
$$
Note that $A_{\mathcal{S}}^{r,\alpha}(f) \leqslant A_{\mathcal{S}}^{p,\alpha}(f)$, the above result is deduced from the case of $p=r$.
\end{proof}

\section{Bloom weighted estimates for fractional operators}\label{Sec: bloom estiamte}
In this section, we will consider the Bloom weighted estimates for operators satisfying the assumptions of Theorem {\ref{Sparse Domination}}. Firstly, we give some definitions needed in our estimates. Suppose $\mu,\lambda$ are two weights. The Bloom weight $\nu$ is defined by 
\begin{equation}{\label{Bloom weight}}
\nu:=({\mu}/{\lambda})^{\frac{1}{m}}.
\end{equation}
Moreover, we define the Bloom weighted BMO-seminorm as
$$\|b\|_{BMO_{\nu}}:=\sup_{Q: \,\text{cube in}\, \mathbb{R}^n}\frac{1}{\nu(Q)}\int_{Q}|b-\ave{b}_Q|.$$ 

The main theorem of this section will be presented as follows.

\begin{thm}{\label{bloom estimate}}
	Let $1 \leqslant p_0 < p < q < q_0 \leqslant \infty$ with  $\frac{1}{q} = \frac{1}{p} - \frac{\alpha}{n}$, $m \in \mathbb{N} \cup \{0\}$, $b \in L_{loc}^{1}(\mathbb{R}^n)$. Suppose $T$ is a sublinear operator and both $T$ and $\mathcal{M}_{T,q_0}^{\#}$ are locally weak $L^{p_0} \rightarrow L^{\frac{p_0n}{n-\alpha p_0}}$ bounded. Suppose $\mu, \lambda$ are two weights such that $\mu^{\frac{q_0 q}{q_0-q}}, \lambda^{\frac{q_0 q}{q_0-q}} \in A_{1+\frac{q_0 q}{q_0-q}\frac{p-p_0}{p_0 p}}$. Then we have
    $$
\|T_b^{m}f\|_{L^q(\lambda^q)} \lesssim C(\mu,\lambda)\|b\|_{\emph{BMO}_{\nu}} \|f\|_{L^{p}(\mu^{p})}.
    $$
Here $\nu$ is the Bloom weight defined by (\ref{Bloom weight}), and 
$$C(\mu,\lambda) = C_1(\mu,\lambda) + C_2(\mu,\lambda),$$
where
\begin{align}
C_1(\mu,\lambda) = &[\lambda^{\frac{q_0 q}{q_0-q}}]_{A_{1+\frac{q_0 q}{q_0-q}\frac{p-p_0}{p_0 p}}}^{\frac{q_0-q}{q_0 q}(1+\max\{{\frac{p_0 p}{p - p_0} \cdot \frac{1}{q}, \frac{q_0 q}{q_0-q}  \cdot \frac{1}{p^{\prime}}}\})} \label{C1}\\
&\cdot [\mu^{p}]_{A_{p/p_0}}^{\max\{1,\frac{p_0}{p-p_0}\}(p_0m -\floor{p_0m}+\frac{\floor{p_0m}^2}{2p_0 m}+\frac{\floor{p_0m}}{2p_0 m})}   \nonumber\\
&\cdot [\lambda^{p}]_{A_{p/p_0}}^{\max\{1,\frac{p_0}{p-p_0}\}(\floor{p_0m}-\frac{\floor{p_0m}^2}{2p_0 m}-\frac{\floor{p_0m}}{2p_0 m})}, \nonumber 
\end{align}

\begin{align}
C_2(\mu,\lambda) = &[\mu^{\frac{q_0 q}{q_0-q}}]_{A_{1+\frac{q_0 q}{q_0-q}\frac{p-p_0}{p_0 p}}}^{\frac{q_0-q}{q_0 q}(1+\max\{{\frac{p_0 p}{p - p_0} \cdot \frac{1}{q}, \frac{q_0 q}{q_0-q}  \cdot \frac{1}{p^{\prime}}}\})} \nonumber \\
&\cdot [\mu^{-q^{\prime}}]_{A_{q^{\prime}/{q_0^{\prime}}}}^{\max\{1,\frac{q_0^{\prime}}{q^{\prime} - q_0^{\prime}}\}(q_0^{\prime}m -\floor{q_0^{\prime}m}+\frac{\floor{q_0^{\prime}m}^2}{2q_0^{\prime} m}+\frac{\floor{q_0^{\prime}m}}{2q_0^{\prime} m})}  \nonumber\\
& \cdot [\lambda^{-q^{\prime}}]_{A_{q^{\prime}/{q_0^{\prime}}}}^{\max\{1,\frac{q_0^{\prime}}{q^{\prime} - q_0^{\prime}}\}(\floor{q_0^{\prime}m} - \frac{\floor{q_0^{\prime}m}^2}{2q_0^{\prime} m} - \frac{\floor{q_0^{\prime}m}}{2q_0^{\prime} m})}. \nonumber 
\end{align}
\end{thm}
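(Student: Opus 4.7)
The plan is to combine Theorem \ref{Sparse Domination} at order $m$ with the quantitative sparse-form bound of Theorem \ref{weighted estimate}, extracting the $\|b\|_{\text{BMO}_\nu}$ factor via a principal-cube stopping-time argument in the spirit of \cite{LLO}. By duality, the claim reduces to bounding, for each $k=0,1,\ldots,m$, the bilinear sparse form
$$\Lambda_{\mathcal{S},k}(f,g) := \sum_{Q \in \mathcal{S}} \ave{|b-\ave{b}_Q|^{m-k}|f|}_{p_0,Q}\, \ave{|b-\ave{b}_Q|^{k}|g|}_{q_0',Q}\, |Q|^{1+\alpha/n}$$
by $\|b\|_{\text{BMO}_\nu}^m \|f\|_{L^p(\mu^p)} \|g\|_{L^{q'}(\lambda^{-q'})}$ times $C(\mu,\lambda)$.

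The central step is to strip each factor $|b-\ave{b}_Q|$ out of the averaging brackets and replace it by a constant multiple of $\|b\|_{\text{BMO}_\nu}\,\nu(\pi(Q))/|\pi(Q)|$, where $\pi(Q)$ denotes the nearest principal ancestor of $Q$ in a stopping family $\mathscr{F}$ chosen relative to the running averages of $b$. Telescoping $b(x)-\ave{b}_Q$ along the chain of ancestors and invoking a John--Nirenberg-type inequality for $\text{BMO}_\nu$ allows each $\Lambda_{\mathcal{S},k}$, after grouping cubes by their stopping parent, to be dominated by $\|b\|_{\text{BMO}_\nu}^m$ times a weighted bilinear sparse form schematically of the shape
$$\sum_{F \in \mathscr{F}} \Big(\frac{\nu(F)}{|F|}\Big)^m \ave{|f|}_{p_0,F}\, \ave{|g|}_{q_0',F}\, |F|^{1+\alpha/n}.$$
Since $\nu^m = \mu/\lambda$, this density factor can be absorbed into either side of the bilinear pairing, producing two symmetric configurations.

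At this stage I would apply Theorem \ref{weighted estimate}, verifying that the assumption $\mu^{q_0 q/(q_0-q)}, \lambda^{q_0 q/(q_0-q)} \in A_{1+(q_0q/(q_0-q))(p-p_0)/(p_0 p)}$ implies simultaneously the two-weight $A$-condition required for the redistributed weights (via $A_r$-duality) and the quantitative $A_\infty$ control on the auxiliary weights $\mu^p, \lambda^p, \mu^{-q'}, \lambda^{-q'}$ that enter $C_1$ and $C_2$. The decomposition $C(\mu,\lambda)=C_1+C_2$ then corresponds exactly to the two configurations above: $C_1$ arises from absorbing the BMO factor into the $f$-side of the pairing, producing $A_{p/p_0}$ characteristics of $\mu^p$ and $\lambda^p$; $C_2$ arises from absorbing it into the $g$-side, producing $A_{q'/q_0'}$ characteristics of $\mu^{-q'}$ and $\lambda^{-q'}$.

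The main obstacle is tracking the exponents sharply through the composition of sparse domination, principal-cube BMO extraction, and two-weight sparse bound. In particular, the appearance of $\floor{p_0 m}$ with fractional correction $p_0 m - \floor{p_0 m}$ in $C_1$ -- and the analogous expression with $q_0' m$ in $C_2$ -- signals that only an integer number of stopping iterations can be absorbed into the $A_{p/p_0}$ (resp.\ $A_{q'/q_0'}$) constants, while the residual non-integer amount must be recovered by a single H\"older step with a fractional exponent. The bookkeeping, although parallel in structure to the diagonal case treated in \cite{LLO}, requires careful realignment with the fractional off-diagonal scale captured by the exponent $|Q|^{1+\alpha/n}$ in the sparse form, and this is the technical heart of the argument.
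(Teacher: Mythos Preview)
Your overall architecture --- sparse domination via Theorem \ref{Sparse Domination}, extraction of $\|b\|_{\mathrm{BMO}_\nu}^m$, then the quantitative two-weight bound of Theorem \ref{weighted estimate} --- matches the paper's, and your reading of $C_1+C_2$ as the two endpoint forms (all BMO weight on $f$ versus all on $g$) is correct. Note, incidentally, that Theorem \ref{Sparse Domination} already hands you only these two forms, so there is no need to treat intermediate $\Lambda_{\mathcal{S},k}$.

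The genuine gap is in the BMO extraction step. The schematic form you display,
\[
\sum_{F \in \mathscr{F}} \Big(\frac{\nu(F)}{|F|}\Big)^m \ave{|f|}_{p_0,F}\, \ave{|g|}_{q_0',F}\, |F|^{1+\alpha/n},
\]
is not what a single principal-cube pass on $b$ can deliver once $p_0>1$: the average $\ave{|b-\ave{b}_Q|^m|f|}_{p_0,Q}$ carries $|b-\ave{b}_Q|^{p_0 m}$ inside the integral, and one stopping layer peels off only one power of the $\nu$-density. More seriously, a form with the extra coefficient $(\nu(F)/|F|)^m$ does not fit the hypothesis of Theorem \ref{weighted estimate}, whose $\lambda_Q$ must be $|Q|^{1+\alpha}$.

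The paper (following \cite{LLO}) proceeds differently. It uses the \emph{pointwise} sparse bound $|b(x)-\ave{b}_Q|\lesssim \|b\|_{\mathrm{BMO}_\nu}\sum_{P\in\mathcal{S}',\,P\subset Q}\ave{\nu}_P\chi_P(x)$ to obtain, uniformly over $Q\in\mathcal{S}$,
\[
\ave{|b-\ave{b}_Q|^{m}|f|}_{p_0,Q} \lesssim \|b\|_{\mathrm{BMO}_\nu}^{m}\,\big\langle\big(\mathcal{A}_{\mathcal{S}',\nu}^{\lfloor p_0 m\rfloor-1}(h)\big)^{1/p_0}\big\rangle_{p_0,Q},
\]
with $h$ built from $\mathcal{A}_{\mathcal{S}'}(|f|^{p_0})$ and a fractional $\nu$-weight. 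This leaves a \emph{clean} fractional sparse form over the original family $\mathcal{S}$, so Theorem \ref{weighted estimate} applies directly with the single weight $\lambda$ and produces the first factor in $C_1$. The remaining task is the $L^{p/p_0}(\lambda^p)$ bound for the iterated operator $\mathcal{A}_{\mathcal{S}',\nu}^{\lfloor p_0 m\rfloor-1}(h)$, done by $\lfloor p_0 m\rfloor$ successive applications of the sharp $A_{p/p_0}$ bound for $\mathcal{A}_{\mathcal{S}'}$, shifting the weight one step at a time from $\lambda^p$ to $\mu^p$ via $\nu^m=\mu/\lambda$; this iteration is precisely where the quadratic exponent $\frac{\lfloor p_0 m\rfloor^2}{2p_0 m}+\frac{\lfloor p_0 m\rfloor}{2p_0 m}$ emerges. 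Your single-pass stopping reduction does not set up this two-stage structure, and the exponents in $C_1,C_2$ cannot be recovered from it.
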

Before coming to the proof, we would like to make some discussion on the above Theorem. If we restrict it to the case of $\mu=\lambda$, we will obtain the following results.
\begin{cor}{\label{bloom estimate1}}
	Let $1 \leqslant p_0 < p < q < q_0 \leqslant \infty$ with  $\frac{1}{q} = \frac{1}{p} - \frac{\alpha}{n}$, $m \in \mathbb{N} \cup \{0\}$, $b \in L_{loc}^{1}(\mathbb{R}^n)$. Suppose $T$ is a sublinear operator and both $T$ and $\mathcal{M}_{T,q_0}^{\#}$ are locally weak $L^{p_0} \rightarrow L^{\frac{p_0n}{n-\alpha p_0}}$ bounded. Suppose $\omega$ is a weight satisfying $\omega^{\frac{q_0 q}{q_0-q}} \in A_{1+\frac{q_0 q}{q_0-q}\frac{p-p_0}{p_0 p}}$. Then, we have
    $$
\|T_b^{m}f\|_{L^q(\omega^q)} \lesssim C(\omega)\|b\|_{\text{BMO}} \|f\|_{L^{p}(\omega^p)},
    $$
where 
$$C(\omega) = [\omega^{\frac{q_0 q}{q_0-q}}]_{A_{1+\frac{q_0 q}{q_0-q}\frac{p-p_0}{p_0 p}}}^{\frac{q_0-q}{q_0 q}(1+\max\{{\frac{p_0 p}{p - p_0} \cdot \frac{1}{q}, \frac{q_0 q}{q_0-q}  \cdot \frac{1}{p^{\prime}}}\}+\max\{p_0 p, \frac{p_0^2 p}{ p-p_0 }, q_0^{\prime} q^{\prime}, \frac{(q_0^{\prime})^2 q^{\prime}}{q^{\prime}-q_0^{\prime}}\} \cdot m)}.$$
\end{cor}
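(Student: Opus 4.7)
The plan is to derive Corollary~\ref{bloom estimate1} from Theorem~\ref{bloom estimate} by specializing to the diagonal case $\mu=\lambda=\omega$ and then compressing every auxiliary weight characteristic in the resulting constant into a single power of $[\omega^{q_0q/(q_0-q)}]_{A_{1+\ldots}}$. With $\mu=\lambda$ the Bloom weight $\nu=(\mu/\lambda)^{1/m}$ is identically $1$, so $\|b\|_{\mathrm{BMO}_\nu}$ collapses to $\|b\|_{\mathrm{BMO}}$; moreover, the hypothesis $\omega^{q_0q/(q_0-q)}\in A_{1+\frac{q_0q}{q_0-q}\frac{p-p_0}{p_0p}}$ simultaneously verifies both weight assumptions of Theorem~\ref{bloom estimate}.

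First I will collapse the pairs of duplicated factors in $C_1(\omega,\omega)$ and $C_2(\omega,\omega)$. In $C_1$ the exponents attached to $[\mu^p]_{A_{p/p_0}}$ and $[\lambda^p]_{A_{p/p_0}}$ telescope:
\[
\bigl(p_0m-\floor{p_0m}+\tfrac{\floor{p_0m}^2+\floor{p_0m}}{2p_0m}\bigr)+\bigl(\floor{p_0m}-\tfrac{\floor{p_0m}^2+\floor{p_0m}}{2p_0m}\bigr)=p_0m,
\]
leaving a single $[\omega^p]_{A_{p/p_0}}^{\max\{1,\,p_0/(p-p_0)\}\,p_0m}$. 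The analogous simplification in $C_2$ yields $[\omega^{-q'}]_{A_{q'/q_0'}}^{\max\{1,\,q_0'/(q'-q_0')\}\,q_0'm}$, while the $[\omega^{q_0q/(q_0-q)}]_{A_{1+\ldots}}$ pieces keep their exponent $\tfrac{q_0-q}{q_0q}\bigl(1+\max\{\ldots\}\bigr)$.

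The quantitative core of the argument is two weight comparisons. Set $r:=q_0q/(q_0-q)$, $t:=1+r(p-p_0)/(p_0p)$ and $\alpha:=pp_0/(p-p_0)$, so that $r/(t-1)=\alpha$; then both $[\omega^p]_{A_{p/p_0}}$ and $[\omega^r]_{A_t}$ feature $\ave{\omega^{-\alpha}}_Q$ as their second factor. Because $p\le r$, Jensen's inequality gives $\ave{\omega^p}_Q\le\ave{\omega^r}_Q^{p/r}$, and the identity $(p-p_0)/p_0=(t-1)p/r$ then produces
\[
[\omega^p]_{A_{p/p_0}}\le[\omega^r]_{A_t}^{p/r}.
\]
For the negative-power factor, the $A_s$--duality $[w]_{A_s}=[w^{1-s'}]_{A_{s'}}^{s-1}$ combined with $q'q_0'/(q'-q_0')=r$ and $(q'-q_0')/q_0'=q'/r$ yields the exact identity $[\omega^{-q'}]_{A_{q'/q_0'}}=[\omega^r]_{A_{q'/(q'-q_0')}}^{q'/r}$, and monotonicity of $[\,\cdot\,]_{A_s}$ in $s$ upgrades this to $[\omega^{-q'}]_{A_{q'/q_0'}}\le[\omega^r]_{A_t}^{q'/r}$ as soon as $t\le q'/(q'-q_0')$.

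Assembling everything and using $(1/r)=(q_0-q)/(q_0q)$, each of $C_1(\omega,\omega)$ and $C_2(\omega,\omega)$ is bounded by $[\omega^r]_{A_t}^{(1/r)E_i}$ with $E_1=1+\max\{\ldots\}+\max\{p_0p,\,\tfrac{p_0^2p}{p-p_0}\}m$ and $E_2$ the analogous expression in $q_0',q'$. Since $[\omega^r]_{A_t}\ge 1$, the sum $C_1+C_2$ is controlled by $[\omega^r]_{A_t}^{(1/r)E}$ where $E$ equals the four-fold maximum displayed in $C(\omega)$, which is exactly the exponent claimed in Corollary~\ref{bloom estimate1}. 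The main technical hurdle will be the verification of the monotonicity-use condition $t\le q'/(q'-q_0')$, which a short manipulation reduces to the elementary inequality $pq(1-p_0)\le p_0(q-p)$, automatic under $p_0\ge 1$ and $q>p$; the rest is careful bookkeeping of exponents.
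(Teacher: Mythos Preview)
Your proposal is correct and follows precisely the approach the paper indicates: the paper simply states that Corollary~\ref{bloom estimate1} is obtained by restricting Theorem~\ref{bloom estimate} to the case $\mu=\lambda$, without writing out any of the bookkeeping. You have carried out exactly that specialization and supplied the details the paper omits---the telescoping of the two exponent pairs to $p_0m$ and $q_0'm$, the Jensen bound $[\omega^p]_{A_{p/p_0}}\le[\omega^r]_{A_t}^{p/r}$, the duality identity $[\omega^{-q'}]_{A_{q'/q_0'}}=[\omega^r]_{A_{q'/(q'-q_0')}}^{q'/r}$, and the monotonicity step via $t\le q'/(q'-q_0')$---all of which check out.
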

\begin{rem}
As we mentioned in section \ref{some application}, if $T$ is the fractional integral operator $I_{\alpha}$ defined by (\ref{eq-fractional integral}), then $T$ satisfies the assumptions of Theorem \ref{bloom estimate} with $p_0=1, q_0=\infty$. In this case, Theorem \ref{bloom estimate} provides a better estimate than \cite[Theorem 1.3]{AMR}. In the one-weight setting, we could obtain the sharp Bloom weighted estimate in \cite{CK,BMMS,AMR}, namely, for any $\omega \in A_{p,q}$,
$$
\|T_b^{m}f\|_{L^q(\omega^q)} \lesssim [\omega]_{A_{p,q}}^{(m+1-\frac{\alpha}{n})\max\{1, 
 \frac{{p^\prime}}{q}\}}\|b\|_{\text{BMO}} \|f\|_{L^{p}(\omega^p)}.
$$    
\end{rem}

Similar to what we mentioned earlier in  Corollary \ref{weighted estimate corollary}, 
we can obtain another Bloom weighted estimate under the assumption of $\omega \in A_{1+\frac{1}{p_0}-\frac{1}{p}} \cap \text{RH}_{q(\frac{q_0}{q})^{\prime}},$ that is,

\begin{cor}{\label{bloom estimate2}}
	Let $1 \leqslant p_0 < p < q < q_0 \leqslant \infty$ with  $\frac{1}{q} = \frac{1}{p} - \frac{\alpha}{n}$, $m \in \mathbb{N} \cup \{0\}$, $b \in L_{loc}^{1}(\mathbb{R}^n)$. Suppose $T$ is a sublinear operator and both $T$ and $\mathcal{M}_{T,q_0}^{\#}$ are locally weak $L^{p_0} \rightarrow L^{\frac{p_0n}{n-\alpha p_0}}$ bounded. Suppose $\omega$ is a weight satisfying $\omega \in A_{1+\frac{1}{p_0}-\frac{1}{p}} \cap \text{RH}_{q(\frac{q_0}{q})^{\prime}}$. Then, we have
    $$
\|T_b^{m}f\|_{L^q(\omega^q)} \lesssim C(\omega)\|b\|_{\text{BMO}} \|f\|_{L^{p}(\omega^p)},
    $$
where 
$$C(\omega) = ([\omega]_{A_{1+\frac{1}{p_0}-\frac{1}{p}}}[\omega]_{\text{RH}_{q(\frac{q_0}{q})^{\prime}}})^{1+\max\{{\frac{p_0 p}{p - p_0} \cdot \frac{1}{q}, \frac{q_0 q}{q_0-q}  \cdot \frac{1}{p^{\prime}}}\}+\max\{p_0 p, \frac{p_0^2 p}{ p-p_0 }, q_0^{\prime} q^{\prime}, \frac{(q_0^{\prime})^2 q^{\prime}}{q^{\prime}-q_0^{\prime}}\}  \cdot m}.$$
\end{cor}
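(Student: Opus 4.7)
The plan is to deduce Corollary \ref{bloom estimate2} directly from Corollary \ref{bloom estimate1} using the sharp Muckenhoupt--Reverse-H\"older factorization
\[
[\omega^{r}]_{A_{r(s-1)+1}} \leq [\omega]_{A_{s}}^{r}\,[\omega]_{\text{RH}_{r}}^{r},
\]
which is precisely the tool already invoked just before Corollary \ref{weighted estimate corollary} to pass from Theorem \ref{weighted estimate for L} to that corollary. So the whole proof is essentially bookkeeping: match the exponents and collect.

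More precisely, I would set
\[
r := q(q_{0}/q)' = \frac{q_{0}q}{q_{0}-q},\qquad s := 1+\frac{1}{p_{0}}-\frac{1}{p},
\]
and verify the identity $r(s-1)+1 = 1+\frac{q_{0}q}{q_{0}-q}\cdot\frac{p-p_{0}}{p_{0}p}$, so that the $A$-index on the left-hand side of the factorization inequality agrees exactly with the one appearing in the hypothesis of Corollary \ref{bloom estimate1}. Consequently, the assumption $\omega \in A_{1+1/p_{0}-1/p}\cap \text{RH}_{q(q_{0}/q)'}$ guarantees $\omega^{q_{0}q/(q_{0}-q)} \in A_{1+\frac{q_{0}q}{q_{0}-q}\frac{p-p_{0}}{p_{0}p}}$ with the quantitative bound
\[
[\omega^{r}]_{A_{r(s-1)+1}}^{1/r} \leq [\omega]_{A_{s}}\,[\omega]_{\text{RH}_{r}},
\]
so that Corollary \ref{bloom estimate1} applies to $\omega$.

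Finally, letting
\[
E := 1+\max\Bigl\{\tfrac{p_{0}p}{p-p_{0}}\cdot\tfrac{1}{q},\,\tfrac{q_{0}q}{q_{0}-q}\cdot\tfrac{1}{p'}\Bigr\}+\max\Bigl\{p_{0}p,\,\tfrac{p_{0}^{2}p}{p-p_{0}},\,q_{0}'q',\,\tfrac{(q_{0}')^{2}q'}{q'-q_{0}'}\Bigr\}\cdot m,
\]
the constant furnished by Corollary \ref{bloom estimate1} takes the form $[\omega^{r}]_{A_{r(s-1)+1}}^{E/r}$ (note that the outer exponent there is precisely $E/r = \frac{q_{0}-q}{q_{0}q}\cdot E$). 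Raising the previous display to the power $E$ yields
\[
[\omega^{r}]_{A_{r(s-1)+1}}^{E/r} \leq \bigl([\omega]_{A_{s}}[\omega]_{\text{RH}_{r}}\bigr)^{E},
\]
which is exactly the constant $C(\omega)$ claimed in Corollary \ref{bloom estimate2}. Combined with the conclusion of Corollary \ref{bloom estimate1}, this completes the proof.

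There is no real obstacle here: the only nontrivial point to check carefully is that the choice $r = q(q_{0}/q)'$ is made precisely so that the factor $1/r = (q_{0}-q)/(q_{0}q)$ in the exponent provided by Corollary \ref{bloom estimate1} cancels the factor $r$ produced by the factorization step, leaving the clean product $[\omega]_{A_{s}}[\omega]_{\text{RH}_{r}}$ raised to $E$ rather than a less natural combination.
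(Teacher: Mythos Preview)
Your proposal is correct and follows exactly the route the paper indicates: the paper does not give a detailed proof of Corollary~\ref{bloom estimate2} but simply says it follows from Corollary~\ref{bloom estimate1} ``similar to what we mentioned earlier in Corollary~\ref{weighted estimate corollary},'' i.e., via the inequality $[\omega^{r}]_{A_{r(s-1)+1}}\le [\omega]_{A_s}^{r}[\omega]_{\mathrm{RH}_r}^{r}$. Your choice $r=q(q_0/q)'$, $s=1+\tfrac{1}{p_0}-\tfrac{1}{p}$ and the exponent bookkeeping are exactly the computation implicit in the paper.
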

\begin{rem}
As shown in the next section that the fractional power operators $L^{-\alpha / \kappa}$ satisfy the assumptions of Theorem \ref{bloom estimate2} with natural exponent $p_0$ and $q_0$. Then Theorem \ref{bloom estimate2} provides a quantitative Bloom weighted estimate for commutators of $L^{-\alpha / \kappa} $, which improve the results in \cite[Theorem 1.4]{AM1}.
\end{rem}

\begin{proof}[Proof of Theorem \ref{bloom estimate}]
From  \ref{Sparse Domination}, we obtain
$$\int_{\mathbb{R}^n}|T_{b}^{m}f||g| \ud x \lesssim \sum\limits_{j=1}^{3^{n}}B_{\mathcal{S}_j,b,p_0,q_0}^{m,\alpha}(f,g) + \sum\limits_{j=1}^{3^{n}}B_{\widetilde{\mathcal{S}}_j,b,q_0^{\prime},p_0^{\prime}}^{m,\alpha}(g,f),$$
where $\mathcal{S}_j \subset \mathscr{D}_j$ are sparse families and the sparse form $B_{\mathcal{S},b,p_0,q_0}^{m,\alpha}(f,g)$ is defined by
$$B_{\mathcal{S},b,p_0,q_0}^{m,\alpha}(f,g):=\sum\limits_{Q \in \mathcal{S}}\ave{|b-\ave{b}_Q|^{m}|f|}_{p_0,Q}\ave{|g|}_{q_0^{\prime},Q}|Q|^{1+\frac{\alpha}{n}}.$$
Thus, it suffices to prove that for any sparse family $\mathcal{S}$,
\begin{equation}{\label{sparse form}}
B_{\mathcal{S},b,p_0,q_0}^{m,\alpha}(f,g) \lesssim C_1(\mu,\lambda)\|b\|_{\text{BMO}_{\nu}}\|f\|_{L^{p}(\mu^{p})}\|g\|_{L^{q^{\prime}}(\lambda^{-q^{\prime}})},    
\end{equation}
where $C_1(\mu,\lambda)$ is given by (\ref{C1}). The proof of (\ref{sparse form}) is adapted from that of \cite[Theorem 5.1]{LLO}, where they obtained a Bloom weighted estimate for the sparse form with $\alpha=0$. In Lerner-Lorist-Ombrosi's proof, they obtained that there exists a sparse family $\mathcal{S}^{\prime} \subset \mathscr{D}$, such that for any $Q \in \mathcal{S}$, we have
$$\ave{|b-\ave{b}_Q|^{m}|f|}_{p_0,Q} \lesssim \|b\|_{\text{BMO}_{\nu}}^{m}\ave{(\mathcal{A}_{\mathcal{S}^{\prime},\nu}^{\floor{p_0 m}-1}(h))^{1/p_0}}_{p_0,Q},$$
where $\mathcal{A}_{\mathcal{S}^{\prime},\nu}^{m-1}(\cdot)$ is the $(\floor{p_0 m}-1)$-th iteration of the sparse operator $\mathcal{A}_{\mathcal{S}^{\prime},\nu}(\cdot)$ defined by
$$\mathcal{A}_{\mathcal{S}^{\prime},\nu}(\cdot):=\mathcal{A}_{\mathcal{S}^{\prime}}(\cdot)\nu, \qquad \mathcal{A}_{\mathcal{S}^{\prime}}(\cdot):=\sum\limits_{Q \in \mathcal{S}^{\prime}}\ave{\cdot}_Q \chi_Q,$$
and 
$$h:=\mathcal{A}_{\mathcal{S}^{\prime}}(|f|^{p_0})^{1-p_0m+\floor{p_0m}}\mathcal{A}_{\mathcal{S}^{\prime}}(\mathcal{A}_{\mathcal{S}^{\prime},\nu}(|f|^{p_0}))^{p_0m-\floor{p_0m}}\nu.$$
Then, applying Theorem \ref{weighted estimate} to $B_{\mathcal{S},b,p_0,q_0}^{m,\alpha}(f,g)$, we have
\begin{align*}
\mathcal{B}_{\mathcal{S},b,p_0,q_0}^{m,\alpha}(f,g) &\lesssim \|b\|_{BMO_{\nu}}^{m}\sum\limits_{Q \in \mathcal{S}}\ave{(\mathcal{A}_{\mathcal{S}^{\prime},\nu}^{\floor{p_0 m}-1}(h))^{1/p_0}}_{{p_0},Q}\ave{|g|}_{q_0^{\prime},Q}{|Q|}^{1+\frac{\alpha}{n}} \nonumber \\
& \lesssim \widetilde{C}(\lambda)\|b\|_{BMO_{\nu}}^{m} \|(\mathcal{A}_{\mathcal{S}^{\prime},\nu}^{\floor{p_0 m}-1}(h))^{1/p_0}\|_{L^{p}(\lambda^{p})}\|g\|_{L^{q^{\prime}}(\lambda^{-q^{\prime}})}, \nonumber 
\end{align*} 
where 
\begin{align*}
 \widetilde{C}(\lambda) = 
 & [\lambda^{\frac{p_0p}{p-p_0}},\lambda^{\frac{q_0q}{q_0-q}}]_{A_{\frac{1}{p_0}-\frac{1}{p},\frac{1}{q}-\frac{1}{q_0}}^{\frac{\alpha}{n}-\frac{1}{p_0}+\frac{1}{q_0}}}^{1+\max\{{\frac{p_0 p}{p - p_0} \cdot \frac{1}{q}, \frac{q_0 q}{q_0-q}  \cdot \frac{1}{p^{\prime}}}\}} \nonumber \\
= & [\lambda^{\frac{q_0 q}{q_0-q}}]_{A_{1+\frac{q_0 q}{q_0-q}\frac{p-p_0}{p_0 p}}}^{\frac{q_0-q}{q_0 q}(1+\max\{{\frac{p_0 p}{p - p_0} \cdot \frac{1}{q}, \frac{q_0 q}{q_0-q}  \cdot \frac{1}{p^{\prime}}}\})}. \nonumber 
\end{align*} 
As a result, we just need to prove

\begin{align}
  \|(\mathcal{A}_{\mathcal{S}^{\prime},\nu}^{\floor{p_0 m}-1}(h))^{1/p_0}\|_{L^{p}(\lambda^{p})} \lesssim & \,[\mu^{p}]_{A_{p/p_0}}^{\max\{1,\frac{p_0}{p-p_0}\}(p_0m -\floor{p_0m}+\frac{\floor{p_0m}^2}{2p_0 m}+\frac{\floor{p_0m}}{2p_0 m})} \label{bloom estimate2}\\  & \cdot [\lambda^{p}]_{A_{p/p_0}}^{\max\{1,\frac{p_0}{p-p_0}\}(\floor{p_0m}-\frac{\floor{p_0m}^2}{2p_0 m}-\frac{\floor{p_0m}}{2p_0 m})}\|f\|_{L^{p}(\mu^{p})} .\nonumber 
\end{align} 

Indeed, estimate (\ref{bloom estimate2}) can be obtained through a strategy similar to that of a \cite[Theorem 5.1]{LLO} by replacing all $u_j$ defined in their proof by $\frac{p}{p_0}$. 
\end{proof}

\section{Applications to fractional powers of differential operators}\label{App sec}

This section is dedicated in proving Theorem \ref{weighted estimate for L} and Theorem \ref{Bloom weighted estimate for L}. 

In this section, unless otherwise specified, we always assume that Let $L$ satisfy (A1) and (A2) for some $1\le p_0<q_0\le \infty$, $\epsilon>0$ and $\kappa>0$. We now define
\begin{equation}
	\label{eq-M Lalpha}
	\mathcal{M}_{L^{-\alpha/\kappa},q_0}f(x) = \sup_{Q\ni x}\Big[\fint_Q |L^{-\alpha/\kappa}(f\chi_{\mathbb R^n\backslash 3Q})|^{q_0}\ud x\Big]^{1/q_0},
\end{equation}
where the supremum is taken over all cubes $Q\subset \mathbb R^n$ containing $x$. It is easy to see from the definitions that $M^{\#}_{L^{-\alpha/\kappa},q_0}f$ defined by (\ref{sharp grand maximal function}) is controlled by $\mathcal{M}_{L^{-\alpha/\kappa},q_0}f$. 

Let $\alpha \in (0,n)$ and $1\le p<\infty$. For $f\in L^1_{\rm loc}(\mathbb R^n)$, we define the following maximal functions
\[
\mathcal M_p f(x) =\sup_{Q\ni x} \Big(\fint_Q |f(y)|^p \ud y\Big)^{1/p}, \ \ \ x\in \mathbb R^n
\]
and
\[
\mathcal M_{\alpha,p} f(x) =\sup_{Q\ni x} \ell(Q)^\alpha\Big(\fint_Q |f(y)|^p \ud y\Big)^{1/p}, \ \ \ x\in \mathbb R^n,
\]
where the supremum is taken over all cubes $Q$ containing $x$. 

By Corollary \ref{two-weight estimate} and Corollary \ref{bloom estimate2}, it suffices to prove:
\begin{thm}\label{thm- weak type for truncated operator}
	Let $0<\alpha<n(\f{1}{p_0}-\f{1}{q_0})$. Then the operator $L^{-\alpha/\kappa}$ and the truncation operator $\mathcal{M}_{L^{-\alpha/\kappa},q_0}$ are bounded from $L^{p_0}(\mathbb R^n)$ to $L^{p_0(\alpha),\infty}(\mathbb R^n)$, where $p_0(\alpha)=\f{p_0n}{n-\alpha p_0}$.  
\end{thm}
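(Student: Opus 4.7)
The proof naturally splits into two parts, one for $L^{-\alpha/\kappa}$ and one for $\mathcal{M}_{L^{-\alpha/\kappa},q_0}$. In both, the key tool is the Mellin representation $L^{-\alpha/\kappa}f=\tfrac{1}{\Gamma(\alpha/\kappa)}\int_0^\infty s^{\alpha/\kappa-1}e^{-sL}f\,\ud s$ split at a threshold $s=r^\kappa$ to be chosen.

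\textbf{Part 1.} Write $L^{-\alpha/\kappa}f = T_{<r}f + T_{\geq r}f$ according to whether $s\leq r^\kappa$ or $s>r^\kappa$. By (A1), $\|e^{-sL}\|_{p_0\to p_0}\lesssim 1$, so Minkowski yields $\|T_{<r}f\|_{p_0}\lesssim r^\alpha\|f\|_{p_0}$. By the global $L^{p_0}\to L^{q_0}$ estimate from (A2) (i.e.\ the degenerate case of the off-diagonal bound), namely $\|e^{-sL}\|_{p_0\to q_0}\lesssim s^{-\frac{n}{\kappa}(\frac{1}{p_0}-\frac{1}{q_0})}$, together with the convergence condition $\alpha<n(\tfrac{1}{p_0}-\tfrac{1}{q_0})$, we obtain $\|T_{\geq r}f\|_{q_0}\lesssim r^{\alpha-n(\frac{1}{p_0}-\frac{1}{q_0})}\|f\|_{p_0}$. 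Chebyshev then gives
\[
|\{|L^{-\alpha/\kappa}f|>2\lambda\}|\lesssim \Big(\tfrac{r^\alpha\|f\|_{p_0}}{\lambda}\Big)^{p_0}+\Big(\tfrac{r^{\alpha-n(1/p_0-1/q_0)}\|f\|_{p_0}}{\lambda}\Big)^{q_0},
\]
and optimising by choosing $r$ with $r^{n/p_0-\alpha}=\|f\|_{p_0}/\lambda$, that is $r=(\|f\|_{p_0}/\lambda)^{p_0(\alpha)/n}$, produces the weak $(p_0,p_0(\alpha))$ bound.

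\textbf{Part 2.} Fix a cube $Q\ni x$ with $r=\ell(Q)$ and decompose $f\chi_{\mathbb{R}^n\setminus 3Q}=\sum_{j\geq 2}f\chi_{S_j(Q)}$. Apply the same semigroup split at $s=r^\kappa$. For the near part, use (A2) on each annulus: since $s^{1/\kappa}\leq \ell(Q)\leq 3^j\ell(Q)$, the factor $(1+s^{1/\kappa}/\ell(Q))^{n/q_0}$ is harmless while $(1+3^j\ell(Q)/s^{1/\kappa})^{-n-\epsilon}$ provides geometric decay in $j$; integrating in $s$ and summing in $j$ bounds the near contribution by $\mathcal{M}_{\alpha,p_0}f(x)$. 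For the far part, use $\|e^{-sL}\|_{p_0\to q_0}\lesssim s^{-n(1/p_0-1/q_0)/\kappa}$, Minkowski in $s$, and the Hölder factor $|Q|^{-1/q_0}$ to obtain a bound by $r^{\alpha-n/p_0}\|f\|_{p_0}$. Altogether, for every cube $Q\ni x$,
\[
\Big(\fint_Q|L^{-\alpha/\kappa}(f\chi_{\mathbb{R}^n\setminus 3Q})|^{q_0}\Big)^{1/q_0}\lesssim \mathcal{M}_{\alpha,p_0}f(x)+\ell(Q)^{\alpha-n/p_0}\|f\|_{p_0}.
\]

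To convert this pointwise estimate into a weak-type bound, perform a level-set analysis. Set $r_\lambda=(\|f\|_{p_0}/\lambda)^{p_0(\alpha)/n}$. Cubes $Q\ni x$ with $\ell(Q)\geq r_\lambda$ make the second term $\leq\lambda$, so their contribution to $\{\mathcal{M}_{L^{-\alpha/\kappa},q_0}f>C\lambda\}$ lies in $\{\mathcal{M}_{\alpha,p_0}f>c\lambda\}$, whose measure is $\lesssim (\|f\|_{p_0}/\lambda)^{p_0(\alpha)}$ by the classical weak-type property of the fractional maximal. For cubes with $\ell(Q)<r_\lambda$, split $L^{-\alpha/\kappa}(f\chi_{\mathbb{R}^n\setminus 3Q})=L^{-\alpha/\kappa}f-L^{-\alpha/\kappa}(f\chi_{3Q})$: the second term is controlled by a Hedberg-type local estimate that exploits the smallness of $\ell(Q)$, while the first is handled through the weak-type bound from Part~1. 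The principal difficulty lies in this last step: since $q_0>p_0(\alpha)$, weak-$L^{p_0(\alpha)}$ information on $L^{-\alpha/\kappa}f$ does not directly control its $L^{q_0}$ averages via the Hardy--Littlewood maximal operator. I expect this to be resolved by a Calder\'on--Zygmund decomposition of $|f|^{p_0}$ at a level calibrated to $\lambda$, so that the ``good'' part is treated through the semigroup's strong bounds while the ``bad'' part is absorbed via the off-diagonal estimates on the Calder\'on--Zygmund cubes.
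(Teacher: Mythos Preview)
Your Part~1 is essentially the paper's argument (the paper uses intermediate exponents $p_0<q_1<p_0(\alpha)<q_2<q_0$ rather than $p_0,q_0$, but either works).

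Your Part~2 has a genuine gap, and it is precisely the one you flagged: the term $\ell(Q)^{\alpha-n/p_0}\|f\|_{p_0}$ blows up for small cubes, and your fallback of writing $L^{-\alpha/\kappa}(f\chi_{\mathbb R^n\setminus 3Q})=L^{-\alpha/\kappa}f-L^{-\alpha/\kappa}(f\chi_{3Q})$ forces you to control $L^{q_0}$ averages of $L^{-\alpha/\kappa}f$, which weak-$L^{p_0(\alpha)}$ cannot deliver since $q_0>p_0(\alpha)$. The Calder\'on--Zygmund decomposition you sketch is unlikely to close: the bad part $b$ has no cancellation that $L^{-\alpha/\kappa}$ can exploit (there are no kernel smoothness estimates available here), and the good part $g$, while bounded, still lacks a useful $L^{q_0}$ bound on $L^{-\alpha/\kappa}g$ in terms of $\lambda$.

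The paper circumvents this obstruction by inserting a smoothing operator. Fix $N$ large and set $P_{N,t}(L)=p(tL)e^{-tL}$ with $p$ a polynomial of degree $N-1$, $p(0)=1$, so that $I-P_{N,t}(L)=\int_0^t c_N(sL)^Ne^{-sL}\frac{\ud s}{s}$. Decompose
\[
L^{-\alpha/\kappa}(f\chi_{\mathbb R^n\setminus 3Q})
= P_{N,\ell(Q)^\kappa}(L)L^{-\alpha/\kappa}f - P_{N,\ell(Q)^\kappa}(L)L^{-\alpha/\kappa}(f\chi_{3Q}) + (I-P_{N,\ell(Q)^\kappa}(L))L^{-\alpha/\kappa}(f\chi_{\mathbb R^n\setminus 3Q}).
\]
The second and third pieces have $L^{q_0}$ averages on $Q$ bounded by $\mathcal M_{\alpha,p_0}f(x)$, by direct use of (A2) and the annular decomposition (this is roughly what your ``near part'' argument does, but the extra factor $(sL)^N$ in $I-P_{N,t}(L)$ provides the decay needed when $s$ is large). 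The first piece is the crux: because $P_{N,t}(L)$ itself satisfies (A2), its $L^{q_0}$ average on $Q$ is controlled by the $L^{p_0}$ average of the input, i.e.
\[
\sup_{Q\ni x}\Big(\fint_Q|P_{N,\ell(Q)^\kappa}(L)L^{-\alpha/\kappa}f|^{q_0}\Big)^{1/q_0}\lesssim \mathcal M_{p_0}(L^{-\alpha/\kappa}f)(x).
\]
This is the key reduction you were missing: the smoothing operator trades the $q_0$ on the outside for a $p_0$ on the inside. Now $\mathcal M_{p_0}\circ L^{-\alpha/\kappa}$ is handled exactly as in Part~1, splitting $L^{-\alpha/\kappa}f=f_A+f^A$ and using that $\mathcal M_{p_0}$ is bounded on $L^{q_1}$ and $L^{q_2}$ for any $q_1,q_2>p_0$.
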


The weak type estimate of $L^{-\alpha/\kappa}$  is verified by the following proposition.
\begin{prop}\label{prop 1}
	Let $0<\alpha<n(\f{1}{p_0}-\f{1}{q_0})$. Then the fractional integral $L^{-\alpha/\kappa}$ 
 is bounded from $L^{p_0}(\mathbb R^n)$ to $L^{p_0(\alpha),\infty}(\mathbb R^n)$, where $p_0(\alpha)=\f{p_0n}{n-\alpha p_0}$.
\end{prop}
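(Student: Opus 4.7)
The plan is the standard ``split at the right time'' argument adapted to the abstract semigroup setting, exploiting only the two ends of the $L^p$--$L^q$ scale provided by (A2): namely the uniform $L^{p_0}$ bound ($p=q=p_0$, $k=0$) and the smoothing bound $\|e^{-sL}\|_{p_0\to q_0}\lesssim s^{-\frac{n}{\kappa}(\frac{1}{p_0}-\frac{1}{q_0})}$. For each $t>0$ we write
\begin{equation*}
L^{-\alpha/\kappa}f = A_t f + B_t f, \qquad A_tf := \f{1}{\Gamma(\alpha/\kappa)}\int_0^t s^{\alpha/\kappa}e^{-sL}f\,\f{\ud s}{s}, \quad B_tf := \f{1}{\Gamma(\alpha/\kappa)}\int_t^\infty s^{\alpha/\kappa}e^{-sL}f\,\f{\ud s}{s}.
\end{equation*}

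By Minkowski's integral inequality and the two bounds just mentioned,
\begin{equation*}
\|A_t f\|_{p_0} \lesssim \|f\|_{p_0}\int_0^t s^{\alpha/\kappa-1}\,\ud s \simeq t^{\alpha/\kappa}\|f\|_{p_0},
\end{equation*}
and, since the assumption $\alpha<n(\tfrac{1}{p_0}-\tfrac{1}{q_0})$ makes the exponent $\alpha/\kappa-1-\tfrac{n}{\kappa}(\tfrac{1}{p_0}-\tfrac{1}{q_0})$ strictly less than $-1$, the integral at infinity converges and
\begin{equation*}
\|B_t f\|_{q_0} \lesssim \|f\|_{p_0}\int_t^\infty s^{\alpha/\kappa-1-\frac{n}{\kappa}(\frac{1}{p_0}-\frac{1}{q_0})}\,\ud s \simeq t^{\alpha/\kappa - \frac{n}{\kappa}(\frac{1}{p_0}-\frac{1}{q_0})}\|f\|_{p_0}.
\end{equation*}
(The hypothesis $\alpha>0$ ensures convergence of the first integral at $0$.) When $q_0=\infty$ the second display is interpreted as an $L^\infty$ bound, which only simplifies the next step.

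With both estimates in hand, Chebyshev gives, for any $\lambda>0$,
\begin{equation*}
\Big|\{|L^{-\alpha/\kappa}f|>2\lambda\}\Big| \le \lambda^{-p_0}\|A_tf\|_{p_0}^{p_0} + \lambda^{-q_0}\|B_tf\|_{q_0}^{q_0} \lesssim \lambda^{-p_0}t^{\alpha p_0/\kappa}\|f\|_{p_0}^{p_0} + \lambda^{-q_0}t^{(\alpha-\frac{n}{p_0}+\frac{n}{q_0})\frac{q_0}{\kappa}}\|f\|_{p_0}^{q_0}.
\end{equation*}
The free parameter $t$ is now chosen to balance the two summands. A direct computation shows that the choice $t^{\alpha/\kappa}=(\|f\|_{p_0}/\lambda)^{\alpha p_0/(n-\alpha p_0)}$ forces \emph{both} terms to equal a constant multiple of $(\|f\|_{p_0}/\lambda)^{p_0(\alpha)}$, which is exactly the weak-type bound we want. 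In the case $q_0=\infty$ one instead chooses $t$ so that $\|B_tf\|_\infty\le\lambda$, making the second term vanish and leaving only the first, handled identically.

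The proof is essentially routine once one has (A2); the only point that needs a moment of care is checking that the arithmetic of the exponents in the optimization step produces precisely $p_0(\alpha)=\tfrac{np_0}{n-\alpha p_0}$, which comes out of the algebra without slack. No regularity of the kernel, no pointwise bound, and no boundedness on $L^{q_0}$ of the full operator are needed; we only use the two endpoints of the off-diagonal family from (A2). The mild justification that is tacitly assumed is that the defining integral for $L^{-\alpha/\kappa}f$ makes sense, e.g.\ first for $f\in L^{p_0}\cap L^2$ and then passing to the full $L^{p_0}$ by density once the weak-type bound is established.
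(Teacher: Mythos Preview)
Your proof is correct and follows essentially the same splitting--Chebyshev--optimize argument as the paper. The only cosmetic difference is that the paper picks two intermediate exponents $p_0<q_1<p_0(\alpha)<q_2<q_0$ and bounds the two pieces in $L^{q_1}$ and $L^{q_2}$, whereas you work directly at the endpoints $p_0$ and $q_0$; this forces you to handle $q_0=\infty$ separately, which the paper's choice avoids, but otherwise the two arguments are interchangeable.
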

\begin{proof}
		
	Since $p_0<p_0(\alpha) <q_0$, we can choose $q_1, q_2$ such that  $p_0<q_1<p_0(\alpha)<q_2<q_0$. Then we write
	$$
	\begin{aligned}
		L^{-\alpha/\kappa}f &= c\int_0^\infty t^{\alpha/\kappa} e^{-tL}f\frac{\ud t}{t}\\
		&=c\int_0^A t^{\alpha/\kappa} e^{-tL}f\frac{\ud t}{t}+c\int_A^\infty t^{\alpha/\kappa} e^{-tL}f\frac{\ud t}{t}\\
		&=:f_A+f^A,
	\end{aligned}
	$$
	where $A$ will be fixed later. 
	
	This, together with Chebyshev's inequality and Minkowski's inequality, implies
	\[
	\begin{aligned}
		|\{x: &|L^{-\alpha/\kappa}f(x)|>\lambda \}|\\
        &\le |\{x: |f_A(x)|>\lambda/2 \}|+|\{x: |f^A(x)|>\lambda/2 \}|\\
		& \lesssim \lambda^{-q_1}\|f_A\|^{q_1}_{q_1} + \lambda^{-q_2}\|f^A\|^{q_2}_{q_2}\\
		&\lesssim \lambda^{-q_1}\Big[\int_0^A t^{\alpha/\kappa} \|e^{-tL}f\|_{q_1}\frac{\ud t}{t}\Big]^{q_1} + \lambda^{-q_2}\Big[\int_A^\infty t^{\alpha/\kappa} \|e^{-tL}f\|_{q_2}\frac{\ud t}{t}\Big]^{q_2}.
	\end{aligned} 	
	\]
	Using \eqref{Lpq estimate},
	\[
	\begin{aligned}
		|\{x: &| L^{-\alpha/\kappa}f (x)|>\lambda \}|\\
		&\lesssim \lambda^{-q_1}\|f\|_{p_0}^{q_1}\Big[\int_0^A t^{\alpha/\kappa} t^{\frac{n}{\kappa}(\frac{1}{q_1}-\frac{1}{p_0})}\frac{\ud t}{t}\Big]^{q_1} + \lambda^{-q_2}\|f\|_{p_0}^{q_2}\Big[\int_A^\infty t^{\alpha/\kappa} t^{\frac{n}{\kappa}(\frac{1}{q_2}-\frac{1}{p_0})}\frac{\ud t}{t}\Big]^{q_2}\\
		&\lesssim \lambda^{-q_1}\|f\|_{p_0}^{q_1}\Big[\int_0^A   t^{\frac{n}{\kappa}(\frac{1}{q_1}-\frac{1}{p_0(\alpha)})}\frac{\ud t}{t}\Big]^{q_1} + \lambda^{-q_2}\|f\|_{p_0}^{q_2}\Big[\int_A^\infty   t^{\frac{n}{\kappa}(\frac{1}{q_2}-\frac{1}{p_0(\alpha)})}\frac{\ud t}{t}\Big]^{q_2}\\
		&\lesssim \lambda^{-q_1}\|f\|_{p_0}^{q_1}A^{\frac{n}{\kappa}(1-\frac{q_1}{p_0(\alpha)})} +\lambda^{-q_2}\|f\|_{p_0}^{q_2}A^{\frac{n}{\kappa}(1-\frac{q_2}{p_0(\alpha)})}.
	\end{aligned} 	
	\]
	Taking $A= \lambda^{-\frac{\kappa p_0(\alpha)}{n}}\|f\|_{p_0}^{\frac{\kappa 
 p_0(\alpha)}{n}}$,
	\[
	|\{x: | L^{-\alpha/\kappa}f (x)|>\lambda \}|\lesssim \lambda^{-p_0(\alpha)}\|f\|_{p_0}^{p_0(\alpha)}.
	\]
	
	This completes our proof.
\end{proof}

The proof of the weak type estimate for the truncated operator $\mathcal M_{L^{-\alpha/\kappa}, q_0}$ is quite long and relies on the following technical results.
\begin{lem}
	\label{lem1} Let $0<\alpha<n(\f{1}{p_0}-\f{1}{q_0})$. For $N\in \mathbb N\cup\{0\}$, we have
	\[
	\Big[\fint_Q |L^{-\alpha/\kappa}(\ell(Q)^\kappa L)^Ne^{-\ell(Q)^\kappa L} (f\chi_{3Q})|^{q_0}\ud x\Big]^{1/q_0}\lesi \inf_{x\in Q}\mathcal M_{\alpha,p_0}f(x)
	\]	
	for all cubes $Q$. 
\end{lem}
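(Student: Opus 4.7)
The plan is to exploit the integral representation $L^{-\alpha/\kappa} = \frac{1}{\Gamma(\alpha/\kappa)}\int_0^\infty s^{\alpha/\kappa} e^{-sL}\frac{\ud s}{s}$ and compose it with the factor $(r^\kappa L)^N e^{-r^\kappa L}$ (where $r=\ell(Q)$) by absorbing $e^{-r^\kappa L}$ into the semigroup kernel. After a bookkeeping rewriting of $L^N$ in terms of the scaling appropriate for (A2), the desired bound will reduce to the $L^{p_0}\to L^{q_0}$ bound of \eqref{Lpq estimate} together with a scalar integral in $s$.

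First I would write
\[
L^{-\alpha/\kappa}(r^\kappa L)^N e^{-r^\kappa L} f
= \frac{r^{N\kappa}}{\Gamma(\alpha/\kappa)}\int_0^\infty s^{\alpha/\kappa}(s+r^\kappa)^{-N}\bigl((s+r^\kappa)L\bigr)^N e^{-(s+r^\kappa)L} f \,\frac{\ud s}{s},
\]
so that each factor inside the integral is exactly of the form $(tL)^N e^{-tL}$ with $t=s+r^\kappa$. Then Minkowski's inequality moves the $L^{q_0}(Q,\ud x/|Q|)$-norm inside the $\ud s/s$ integral, and \eqref{Lpq estimate} gives
\[
\Bigl[\fint_Q \bigl|((s+r^\kappa)L)^N e^{-(s+r^\kappa)L}(f\chi_{3Q})\bigr|^{q_0}\Bigr]^{1/q_0}
\lesi r^{n(\frac{1}{p_0}-\frac{1}{q_0})}(s+r^\kappa)^{-\frac{n}{\kappa}(\frac{1}{p_0}-\frac{1}{q_0})}\Bigl(\fint_{3Q}|f|^{p_0}\Bigr)^{1/p_0}.
\]
Bounding $\bigl(\fint_{3Q}|f|^{p_0}\bigr)^{1/p_0}\lesi r^{-\alpha}\inf_{x\in Q}\mathcal M_{\alpha,p_0}f(x)$ by definition of $\mathcal M_{\alpha,p_0}$ pulls the maximal function out of the integral.

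What remains is the scalar integral
\[
r^{N\kappa+n(\frac{1}{p_0}-\frac{1}{q_0})-\alpha}\int_0^\infty s^{\alpha/\kappa}(s+r^\kappa)^{-N-\frac{n}{\kappa}(\frac{1}{p_0}-\frac{1}{q_0})}\,\frac{\ud s}{s},
\]
which I would evaluate via the change of variables $s=r^\kappa u$; this produces the Beta-type integral $\int_0^\infty u^{\alpha/\kappa-1}(1+u)^{-N-\frac{n}{\kappa}(\frac{1}{p_0}-\frac{1}{q_0})}\,\ud u$ multiplied by $r^{\alpha-\kappa N-n(\frac{1}{p_0}-\frac{1}{q_0})}$, and the $r$-powers cancel exactly. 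The only delicate point — the place where the hypothesis $0<\alpha<n(\frac{1}{p_0}-\frac{1}{q_0})$ is essential — is the convergence of this Beta integral: integrability at $0$ uses $\alpha>0$, and integrability at $\infty$ uses $\alpha/\kappa<N+\frac{n}{\kappa}(\frac{1}{p_0}-\frac{1}{q_0})$, which is automatic from the stated upper bound on $\alpha$ (and trivially holds once $N\ge 0$). This is the only genuinely technical step; everything else is algebraic.
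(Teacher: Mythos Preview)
Your proposal is correct and follows essentially the same approach as the paper: both rewrite $L^{-\alpha/\kappa}(\ell(Q)^\kappa L)^N e^{-\ell(Q)^\kappa L}$ via the semigroup formula as an integral of $((s+\ell(Q)^\kappa)L)^N e^{-(s+\ell(Q)^\kappa)L}$ against a scalar kernel, apply the $L^{p_0}\to L^{q_0}$ bound \eqref{Lpq estimate}, and reduce to a scalar integral whose finiteness uses $0<\alpha<n(\tfrac{1}{p_0}-\tfrac{1}{q_0})$. The only cosmetic difference is that the paper evaluates this scalar integral by splitting at $s=\ell(Q)^\kappa$ into two pieces $I_1,I_2$, whereas you substitute $s=r^\kappa u$ and recognize a convergent Beta-type integral directly; these are equivalent.
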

\begin{proof}
	Using the formula
	$$
	L^{-\alpha/\kappa} = c\int_0^\infty s^{\alpha/\kappa} e^{-sL}\frac{\ud s}{s},
	$$
	we can write, for each $N\in \mathbb{N}\cup\{0\}$ and $t>0$,
	\begin{equation*}
		\begin{aligned}
			L^{-\alpha/\kappa}(\ell(Q)^\kappa L)^Ne^{-\ell(Q)^\kappa L}&=\int_0^{\infty}  s^{\alpha/\kappa}e^{-sL}(\ell(Q)^\kappa L)^Ne^{-\ell(Q)^\kappa L} \frac{\ud s}{s}\\
			&=\int_0^{\infty} \frac{s^{\alpha/\kappa}\ell(Q)^{\kappa N}}{(s+\ell(Q)^\kappa )^{N}}[(s+\ell(Q)^\kappa )L]^{N}e^{-(s+\ell(Q)^\kappa )L}\frac{\ud s}{s}.
		\end{aligned}
	\end{equation*}
	Applying \eqref{Lpq estimate} to obtain
	\[
	\begin{aligned}
		\|L^{-\alpha/\kappa}(\ell(Q)^\kappa L)^N&e^{-\ell(Q)^\kappa L}\|_{L^{p_0}(3Q)\to L^{q_0}(Q)}\\
		&\le \int_0^{\infty} \frac{s^{\alpha/\kappa}\ell(Q)^{\kappa N}}{(s+\ell(Q)^\kappa )^{N}}\left\|[(s+\ell(Q)^\kappa )L]^{N}e^{-(s+\ell(Q)^\kappa )L}\right\|_{p_0\to q_0} \frac{\ud s}{s}\\
		&\lesssim \int_0^{\infty} \frac{s^{\alpha/\kappa}\ell(Q)^{\kappa N}}{(s+\ell(Q)^\kappa )^{N}}(s+\ell(Q)^\kappa )^{-\f{n}{\kappa}(\f{1}{p_0}-\f{1}{q_0})} \frac{\ud s}{s}\\
		&\lesssim \int_0^{\ell(Q)^\kappa }\ldots + \int_{\ell(Q)^\kappa }^\infty\ldots\\
		&=:I_1 + I_2. 
	\end{aligned}
	\]
	For the term $I_1$, it is easy to see that $s+\ell(Q)^\kappa  \simeq \ell(Q)^\kappa $ in this situation. Hence,
	\[
	\begin{aligned}
		I_1 &\lesi \int_0^{\ell(Q)^\kappa } s^{\alpha/\kappa} \ell(Q)^{-n(\f{1}{p_0}-\f{1}{q_0})} \f{\ud s}{s}\\
		&\lesi \ell(Q)^{\alpha} \ell(Q)^{-n(\f{1}{p_0}-\f{1}{q_0})}\\
		&\simeq |Q|^{\alpha/n}|Q|^{\f{1}{q_0}-\f{1}{p_0}}.
	\end{aligned}
	\]
	For the second term $I_2$, we have $s+\ell(Q)^\kappa \simeq s$ for $s\ge \ell(Q)^\kappa $. In addition, $\alpha =n(\f{1}{p_0}-\f{1}{p_0(\alpha)})<n(\f{1}{p_0}-\f{1}{q_0})$. Consequently,
	\[
	\begin{aligned}
		I_2 &\lesi \int_{\ell(Q)^\kappa }^\infty \f{s^{\alpha/\kappa}\ell(Q)^{\kappa N}}{s^N  } s^{-\f{n}{\kappa}(\f{1}{p_0}-\f{1}{q_0})} \f{\ud s}{s}\\
		&\lesi \ell(Q)^{\alpha} \ell(Q)^{-n(\f{1}{p_0}-\f{1}{q_0})}\\
		&\simeq |Q|^{\alpha/n}|Q|^{\f{1}{q_0}-\f{1}{p_0}}.
	\end{aligned}
	\]
	Therefore,
	\[
	\|L^{-\alpha/\kappa}(\ell(Q)^\kappa L)^Ne^{-\ell(Q)^\kappa L}\|_{L^{p_0}(3Q)\to L^{q_0}(Q)}\lesi |Q|^{\alpha/n}|Q|^{\f{1}{q_0}-\f{1}{p_0}},
	\]
	which implies
	\[
	\begin{aligned}
		\Big[\fint_Q |L^{-\alpha/\kappa}(\ell(Q)^\kappa L)^Ne^{-\ell(Q)^\kappa L} (f\chi_{3Q})|^{q_0}\ud x\Big]^{1/q_0}&\lesi |Q|^{\alpha/n}\Big[\fint_{3Q} |f|^{p_0}\ud x\Big]^{1/p_0}\\
		&\lesi \inf_{x\in Q}\mathcal M_{\alpha,p_0}f(x).
	\end{aligned}
	\]
	
	This completes our proof.
\end{proof} 

For every $t>0$ and $N\in \mathbb N$ we define
\begin{equation}
	\label{defn Q and P}
	Q_{N,t}(L) = c_N (tL)^Ne^{-tL}, \ \ \ \text{and} \ \ \ P_{N,t}(L) = \int_1^\infty Q_{N,st}(L)\frac{\ud s}{s}=\int_t^\infty Q_{N,s}(L)\frac{\ud s}{s}
\end{equation}
where $\displaystyle c_N = \Big(\int_0^\infty s^N e^{-s}\frac{\ud s}{s}\Big)^{-1}=\frac{1}{(N-1)!}$.

By  using integration by parts, we can see that $P_{N,t}(L)=p(tL)e^{-tL}$ where $p$ is a polynomial of degree $N-1$ and $p(0)=1$. Therefore,  $P_{N,t}(L)$ satisfies (A1) and (A2).

\begin{lem}
	\label{lem2} Let $0<\alpha<n(\f{1}{p_0}-\f{1}{q_0})$. For $N\in \mathbb N$ with $N >\kappa(\alpha+n/q_0+\epsilon)$,
	\[
	\Big[\fint_Q |L^{-\alpha/\kappa}(I-P_{N, \ell(Q)^\kappa }(L))(f\chi_{\mathbb R^n\backslash 3Q})|^{q_0}\ud x\Big]^{1/q_0}\lesi \inf_{x\in Q}\mathcal M_{\alpha,p_0}f(x)
	\] 
	for all cubes $Q$.
\end{lem}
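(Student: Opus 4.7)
The strategy is to decompose $\mathbb{R}^n\setminus 3Q=\bigcup_{j\ge 2}S_j(Q)$ into dyadic annular shells, derive an explicit integral representation of $L^{-\alpha/\kappa}(I-P_{N,t}(L))$ in terms of the heat-type operators $(\tau L)^Ne^{-\tau L}$, apply the off-diagonal bound (A2) to each annular piece, and sum in $j$. Writing $t=\ell(Q)^\kappa$ and $f\chi_{\mathbb{R}^n\setminus 3Q}=\sum_{j\ge 2}f\chi_{S_j(Q)}$, Minkowski's inequality reduces the task to bounding
\[
I_j := \Big(\fint_Q |L^{-\alpha/\kappa}(I-P_{N,t}(L))(f\chi_{S_j(Q)})|^{q_0}\,\ud x\Big)^{1/q_0}
\]
by a quantity of the form $3^{-j\gamma}\inf_{x\in Q}\mathcal{M}_{\alpha,p_0}f(x)$ with $\gamma>0$, and then summing the resulting geometric series.

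To obtain the representation, I start from the identity $I-P_{N,t}(L)=c_N\int_0^t(sL)^Ne^{-sL}\frac{\ud s}{s}$ together with the subordination formula $L^{-\alpha/\kappa}=\frac{1}{\Gamma(\alpha/\kappa)}\int_0^\infty u^{\alpha/\kappa-1}e^{-uL}\,\ud u$. Since $L^{N-\alpha/\kappa}e^{-sL}=\frac{1}{\Gamma(\alpha/\kappa)}\int_0^\infty u^{\alpha/\kappa-1}L^Ne^{-(s+u)L}\,\ud u$, the substitution $\tau=s+u$ followed by Fubini yields
\[
L^{-\alpha/\kappa}(I-P_{N,t}(L))=\int_0^\infty K(\tau,t)\,(\tau L)^Ne^{-\tau L}\,\ud\tau,
\]
with $K(\tau,t)\simeq\tau^{\alpha/\kappa-1}$ for $\tau\le t$ and $K(\tau,t)\lesssim t^N\tau^{\alpha/\kappa-N-1}$ for $\tau\ge 2t$. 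For each $\tau>0$ and $j\ge 2$, applying (A2) with $k=N$, $p=p_0$, $q=q_0$ to $f\chi_{S_j(Q)}$ supplies the crucial off-diagonal decay factor $(1+3^j\ell(Q)/\tau^{1/\kappa})^{-n-\epsilon}$. Combining this with the pointwise bound
\[
\Big(\fint_{S_j(Q)}|f|^{p_0}\Big)^{1/p_0}\lesssim 3^{-j\alpha}\ell(Q)^{-\alpha}\inf_{x\in Q}\mathcal{M}_{\alpha,p_0}f(x),
\]
which follows from $S_j(Q)\subset 3^jQ$ and $|S_j(Q)|\simeq |3^jQ|$, and integrating in $\tau$ produces the desired $j$-wise estimate.

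The main obstacle is controlling the $\tau$-integral in the regime $\tau^{1/\kappa}\gg\ell(Q)$, where the prefactor $\max\{(3^j\ell(Q)/\tau^{1/\kappa})^n,(3^j\ell(Q)/\tau^{1/\kappa})^{n/p_0}\}(1+\tau^{1/\kappa}/\ell(Q))^{n/q_0}$ from (A2) grows polynomially in $\tau$ and must be dominated by the decay $t^N\tau^{\alpha/\kappa-N-1}$ of $K$. Splitting the $\tau$-integral at the transition points $\tau=\ell(Q)^\kappa$ and $\tau=(3^j\ell(Q))^\kappa$ and carefully tracking the powers of $\tau$, $\ell(Q)$ and $3^j$ in each regime, one sees that the hypothesis $N>\kappa(\alpha+n/q_0+\epsilon)$ is precisely what is needed to make every contribution finite and to leave a positive geometric decay factor $3^{-j\gamma}$ after integration. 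Summing over $j\ge 2$ then closes the estimate.
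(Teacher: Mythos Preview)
Your proposal is correct and follows essentially the same route as the paper's proof: the paper also decomposes $f\chi_{\mathbb R^n\setminus 3Q}$ into the annular pieces $f_j=f\chi_{S_j(Q)}$, expresses $L^{-\alpha/\kappa}(I-P_{N,\ell(Q)^\kappa}(L))$ as a superposition of the operators $Q_{N,s+t}(L)=c_N((s+t)L)^Ne^{-(s+t)L}$, applies the off-diagonal bound (A2) with $k=N$, and then splits the integration at the scales $\ell(Q)^\kappa$ and $(3^j\ell(Q))^\kappa$ to extract the geometric factor $3^{-j\epsilon}$. The only cosmetic difference is that you perform the Fubini/substitution $\tau=s+u$ at the outset to obtain a single $\tau$-integral with an explicit kernel $K(\tau,t)$, whereas the paper keeps the double integral $\int_0^\infty\int_0^{\ell(Q)^\kappa}\frac{s^{\alpha/\kappa}t^N}{(s+t)^N}Q_{N,s+t}(L)\,\frac{\ud t}{t}\frac{\ud s}{s}$ and evaluates it directly; the resulting case analysis and the role of the hypothesis $N>\kappa(\alpha+n/q_0+\epsilon)$ are identical.
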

\begin{proof}
	Note that
	\[
	I-P_{N, \ell(Q)^\kappa }(L)=\int_0^{\ell(Q)^\kappa } Q_{N,t}(L)\frac{\ud t}{t}
	\]
	and
	\[
	L^{-\alpha/\kappa} = c\int_0^\infty s^{\alpha/\kappa} e^{-sL}\frac{\ud s}{s}.
	\]
	Hence,
	\begin{equation}
		\label{eq-I3 1}
		\begin{aligned}
			\Big[\fint_Q |L^{-\alpha/\kappa}&(I-P_{N, \ell(Q)^\kappa }(L))(f\chi_{\mathbb R^n\backslash 3Q})|^{q_0}\ud x\Big]^{1/q_0}\\
			&\le \Big[\fint_Q \Big|\Big(\int_0^\infty \int_0^{\ell(Q)^\kappa } s^{\alpha/\kappa}e^{-sL}Q_{N,t}(L)(f\chi_{\mathbb R^n\backslash 3Q})\frac{\ud t}{t}\f{\ud s}{s}\Big)\Big|^{q_0}\ud x\Big]^{1/q_0}\\
			&= \Big[\fint_Q \Big|\Big(\int_0^\infty \int_0^{\ell(Q)^\kappa } \f{s^{\alpha/\kappa}t^N}{(s+t)^N} Q_{N,s+t}(L)(f\chi_{\mathbb R^n\backslash 3Q})\frac{\ud t}{t}\f{\ud s}{s}\Big)\Big|^{q_0}\ud x\Big]^{1/q_0}\\
			&\le \sum_{j\ge 2} \Big[\fint_Q \Big|\Big(\int_0^\infty \int_0^{\ell(Q)^\kappa } \f{s^{\alpha/\kappa}t^N}{(s+t)^N} Q_{N,s+t}(L)f_j\frac{\ud t}{t}\f{\ud s}{s}\Big)\Big|^{q_0}\ud x\Big]^{1/q_0},
		\end{aligned}
	\end{equation}
	where $f_j= f\chi_{S_j(Q)}$ for $j\ge 2$. See (A2) for the definition of $S_j(Q)$.
	
	For each $j$, by Minkowski's inequality,
	\[
	\begin{aligned}
		\Big[\fint_Q &\Big|\Big(\int_0^\infty \int_0^{\ell(Q)^\kappa } \f{s^{\alpha/\kappa}t^N}{(s+t)^N} Q_{N,s+t}(L)f_j\frac{\ud t}{t}\f{\ud s}{s}\Big)\Big|^{q_0}\ud x\Big]^{1/q_0}\\
		&\lesi  \int_0^\infty \int_0^{\ell(Q)^\kappa } \f{s^{\alpha/\kappa}t^N}{(s+t)^N} \|Q_{N,s+t}(L)f_j\|_{(L^{q_0}(Q), \f{\ud x}{|Q|})}\frac{\ud t}{t}\f{\ud s}{s}\\
		&\lesi  \int_0^{\ell(Q)^\kappa }\int_0^{\ell(Q)^\kappa }\ldots + \int_{\ell(Q)^\kappa }^\infty\int_0^{\ell(Q)^\kappa }\ldots =: E_1 + E_2. 
	\end{aligned}
	\]
	We now take care of the first term $E_1$. By \eqref{eq2-Tt}, for $\ell(Q)^\kappa \ge \max\{s,t\}$,
	\[
	\begin{aligned}
		\Big(\fint_{Q}&|Q_{N,s+t}(L)f_j|^{q_0}\Big)^{1/q_0}\\
		&\lesi 
		\max\Big\{\Big(\f{3^j\ell(Q)}{(s+t)^{1/\kappa}}\Big)^n,\Big(\f{3^j\ell(Q)}{(s+t)^{1/\kappa}}\Big)^{n/p_0} \Big\} \Big(1+\f{(s+t)^{1/\kappa}}{\ell(Q)}\Big)^{n/q_0}\Big(1+\f{3^j\ell(Q)}{(s+t)^{1/\kappa}}\Big)^{-n-\epsilon}\\
		& \ \ \ \ \ \ \ \hskip2cm \times \Big(\fint_{S_j(Q)}|f|^{p_0}\Big)^{1/p_0}\\
		&\lesi 
		\Big(\f{3^j\ell(Q)}{(s+t)^{1/\kappa}}\Big)^n  \Big(\f{3^j\ell(Q)}{(s+t)^{1/\kappa}}\Big)^{-n-\epsilon} \Big(\fint_{S_j(Q)}|f|^{p_0}\Big)^{1/p_0}\\
		&\lesi \Big(\f{(s+t)^{1/\kappa}}{3^j\ell(Q)}\Big)^{\epsilon} \Big(\fint_{S_j(Q)}|f|^{p_0}\Big)^{1/p_0}.
	\end{aligned}
	\]
	Hence,
	\[
	\begin{aligned}
		E_1&\lesi \int_0^{\ell(Q)^\kappa } \int_0^{\ell(Q)^\kappa } \f{s^{\alpha/\kappa}t^N}{(s+t)^N} \Big(\f{(s+t)^{1/\kappa}}{3^j\ell(Q)}\Big)^{\epsilon}  \frac{\ud t}{t}\f{\ud s}{s} \ \times \Big(\fint_{S_j(Q)}|f|^{p_0}\Big)^{1/p_0}.
	\end{aligned}
	\]
	On the other hand,
	\[
	\begin{aligned}
		\int_0^{\ell(Q)^\kappa } &\int_0^{\ell(Q)^\kappa } \f{s^{\alpha/\kappa}t^N}{(s+t)^N} \Big(\f{(s+t)^{1/\kappa}}{3^j\ell(Q)}\Big)^{\epsilon}  \frac{\ud t}{t}\f{\ud s}{s}\\
		&\lesi \int_0^{\ell(Q)^\kappa } \int_0^{s} \f{s^{\alpha/\kappa}t^N}{(s+t)^N} \Big(\f{(s+t)^{1/\kappa}}{3^j\ell(Q)}\Big)^{\epsilon}  \frac{\ud t}{t}\f{\ud s}{s} \\
        & \qquad + \int_0^{\ell(Q)^\kappa } \int_s^{\ell(Q)^\kappa } \f{s^{\alpha/\kappa}t^N}{(s+t)^N} \Big(\f{(s+t)^{1/\kappa}}{3^j\ell(Q)}\Big)^{\epsilon}  \frac{\ud t}{t}\f{\ud s}{s}\\
		&\simeq \int_0^{\ell(Q)^\kappa } \int_0^{s} \f{s^{\alpha/\kappa}t^N}{s^N} \Big(\f{s^{1/\kappa}}{3^j\ell(Q)}\Big)^{\epsilon}  \frac{\ud t}{t}\f{\ud s}{s}+ \int_0^{\ell(Q)^\kappa } \int_s^{\ell(Q)^\kappa } \f{s^{\alpha/\kappa}t^N}{t^N} \Big(\f{t^{1/\kappa}}{3^j\ell(Q)}\Big)^{\epsilon}  \frac{\ud t}{t}\f{\ud s}{s}\\
		&\lesi 3^{-j\epsilon}\ell(Q)^\alpha. 
	\end{aligned}
	\]
	It follows that
	\[
	\begin{aligned}
		E_1&\lesi    2^{-j\epsilon}\ell(Q)^\alpha\Big(\fint_{S_j(Q)}|f|^{p_0}\Big)^{1/p_0}\\
		&\lesi 2^{-j\epsilon}\inf_{x\in Q} \mathcal M_{\alpha,p_0}f(x).
	\end{aligned}
	\]
	For the second term $E_2$, by \eqref{eq2-Tt}, for $t\le \ell(Q)^\kappa \le  s$,
	\[
	\begin{aligned}
		\Big(\fint_{Q}&|Q_{N,s+t}(L)f_j|^{q_0}\Big)^{1/q_0}\\
		&\lesi 
		\max\Big\{\Big(\f{3^j\ell(Q)}{(s+t)^{1/\kappa}}\Big)^n,\Big(\f{3^j\ell(Q)}{(s+t)^{1/\kappa}}\Big)^{n/p_0} \Big\} \Big(1+\f{(s+t)^{1/\kappa}}{\ell(Q)}\Big)^{n/q_0}\Big(1+\f{3^j\ell(Q)}{(s+t)^{1/\kappa}}\Big)^{-n-\epsilon}\\
		& \ \ \ \ \ \ \ \hskip2cm \times \Big(\fint_{S_j(Q)}|f|^{p_0}\Big)^{1/p_0}\\
		&\lesi 
		\max\Big\{\Big(\f{3^j\ell(Q)}{s^{1/\kappa}}\Big)^n,\Big(\f{3^j\ell(Q)}{s^{1/\kappa}}\Big)^{n/p_0} \Big\} \Big(\f{s^{1/\kappa}}{\ell(Q)}\Big)^{n/q_0}\Big(1+\f{3^j\ell(Q)}{s^{1/\kappa}}\Big)^{-n-\epsilon} \Big(\fint_{S_j(Q)}|f|^{p_0}\Big)^{1/p_0}.
	\end{aligned}
	\]
	Hence,
	\[
	\begin{aligned}
		E_2&\lesi \int_{\ell(Q)^\kappa }^{[3^j\ell(Q)]^\kappa} \int_0^{\ell(Q)^\kappa } \f{s^{\alpha/\kappa}t^N}{s^N} \Big(\f{s^{1/\kappa}}{\ell(Q)}\Big)^{n/q_0}\Big(\f{3^j\ell(Q)}{s^{1/\kappa}}\Big)^{-\epsilon} \frac{\ud t}{t}\f{\ud s}{s} \times \Big(\fint_{S_j(Q)}|f|^{p_0}\Big)^{1/p_0}\\
		&\ \ + \int_{[3^j\ell(Q)]^\kappa}^\infty \int_0^{\ell(Q)^\kappa } \f{s^{\alpha/\kappa}t^N}{s^N}  \Big(\f{3^j\ell(Q)}{s^{1/\kappa}}\Big)^{n/p_0}  \Big(\f{s^{1/\kappa}}{\ell(Q)}\Big)^{n/q_0}  \frac{\ud t}{t}\f{\ud s}{s} \times \Big(\fint_{S_j(Q)}|f|^{p_0}\Big)^{1/p_0}\\
		&\lesi \int_{\ell(Q)^\kappa }^{[3^j\ell(Q)]^\kappa}   \f{s^{\alpha/\kappa}\ell(Q)^{\kappa N}}{s^N}     \Big(\f{s^{1/\kappa}}{\ell(Q)}\Big)^{n/q_0}\Big(\f{s^{1/\kappa}}{3^j\ell(Q)}\Big)^{\epsilon}  \f{\ud s}{s} \times \Big(\fint_{S_j(Q)}|f|^{p_0}\Big)^{1/p_0}\\
		&\ \ + \int_{[3^j\ell(Q)]^\kappa}^\infty   \f{s^{\alpha/\kappa}\ell(Q)^{\kappa N}}{s^N}  \Big(\f{3^j\ell(Q)}{s^{1/\kappa}}\Big)^{n/p_0}  \Big(\f{s^{1/\kappa}}{\ell(Q)}\Big)^{n/q_0}   \f{\ud s}{s} \times \Big(\fint_{S_j(Q)}|f|^{p_0}\Big)^{1/p_0}\\
		&\lesi    3^{-j\epsilon}\ell(Q)^\alpha\Big(\fint_{S_j(Q)}|f|^{p_0}\Big)^{1/p_0}\\
		&\lesi 3^{-j\epsilon}\inf_{x\in Q} \mathcal M_{\alpha,p_0}f(x),
	\end{aligned}
	\]
	as long as $N>\kappa(\alpha+n/q_0+\epsilon)$.
	
	Collecting the estimates of $E_1$ and $E_2$, we have
	\[
	\Big[\fint_Q \Big|\Big(\int_0^\infty \int_0^{\ell(Q)^\kappa } \f{s^{\alpha/\kappa}t^N}{(s+t)^N} Q_{N,s+t}(L)f_j\frac{\ud t}{t}\f{\ud s}{s}\Big)\Big|^{q_0}\ud x\Big]^{1/q_0}\lesi 3^{-j\epsilon}\inf_{x\in Q} \mathcal M_{\alpha,p_0}f(x). 
	\]
	This, together with \eqref{eq-I3 1}, implies that
	$$
	\Big[\fint_Q |L^{-\alpha/\kappa}(I-P_{N, \ell(Q)^\kappa }(L))(f\chi_{\mathbb R^n\backslash 3Q})|^{q_0}\ud x\Big]^{1/q_0}\lesi \inf_{x\in Q} \mathcal M_{\alpha,p_0}f(x).
	$$
	This completes our proof.
\end{proof}

Let $P_{N,t}(L)$ be as in \eqref{defn Q and P}. We now define
\begin{equation}\label{eq- Tsharp L}
	\begin{aligned}
		T^\sharp_{L} f(x) 
		&= \sup_{Q\ni x} \Big[\fint_Q |P_{N, \ell(Q)^\kappa }(L)L^{-\alpha/\kappa}f(y)|^{q_0}\ud y\Big]^{1/q_0}
	\end{aligned}
\end{equation}
and the supremum is taken over all the cubes $Q$ containing $x$. Then we have:

\begin{lem}\label{lem3} Let $0<\alpha<n(\f{1}{p_0}-\f{1}{q_0})$. For $N\in \mathbb N$, the operator $ T^\sharp_{L}$ defined by \eqref{eq- Tsharp L} is bounded from $L^{p_0}(\mathbb R^n)$ to $L^{p_0(\alpha),\infty}(\mathbb R^n)$, where $p_0(\alpha)=\f{p_0n}{n-\alpha p_0}$.
	
\end{lem}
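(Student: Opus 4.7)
I plan to prove Lemma \ref{lem3} by following the template used in Lemma \ref{lem2}. For each cube $Q$ with $x \in Q$, write $f = f\chi_{3Q} + f\chi_{\mathbb R^n\setminus 3Q}$ and correspondingly split $[\fint_Q |P_{N,\ell(Q)^\kappa}(L) L^{-\alpha/\kappa} f|^{q_0}]^{1/q_0}$ into local and global contributions. The local contribution is treated by the commutation relation $P_{N,\ell(Q)^\kappa}(L) L^{-\alpha/\kappa} = L^{-\alpha/\kappa} P_{N,\ell(Q)^\kappa}(L)$ together with the polynomial representation $P_{N,\ell(Q)^\kappa}(L) = p(\ell(Q)^\kappa L) e^{-\ell(Q)^\kappa L}$ of degree $N-1$; applying Lemma \ref{lem1} monomial-by-monomial yields a pointwise bound by $\mathcal M_{\alpha, p_0} f(x)$.

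For the global contribution, I would combine the integral representations
\[ P_{N,\ell(Q)^\kappa}(L) = c_N \int_{\ell(Q)^\kappa}^\infty (uL)^N e^{-uL}\frac{du}{u}, \qquad L^{-\alpha/\kappa} = c\int_0^\infty s^{\alpha/\kappa} e^{-sL}\frac{ds}{s}, \]
substitute $\tau = u + s$ and integrate out the $u$-variable to obtain
\[ P_{N,\ell(Q)^\kappa}(L) L^{-\alpha/\kappa}(f\chi_{\mathbb R^n\setminus 3Q}) = c_N c \int_{\ell(Q)^\kappa}^\infty \tau^{\alpha/\kappa - 1}\tilde B(\ell(Q)^\kappa/\tau)(\tau L)^N e^{-\tau L}(f\chi_{\mathbb R^n\setminus 3Q})\, d\tau, \]
where $\tilde B(a) = \int_a^1 (1-w)^{\alpha/\kappa - 1} w^{N-1}\,dw$ is uniformly bounded by $B(\alpha/\kappa, N)$. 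Then decompose $f\chi_{\mathbb R^n\setminus 3Q} = \sum_{j\ge 2} f\chi_{S_j(Q)}$ and apply \eqref{eq2-Tt} with $p = p_0$, $q = q_0$, $k = N$ to each annulus, splitting the $\tau$-integral at the transition scale $(3^j\ell(Q))^\kappa$ and treating the regimes $\tau \le (3^j\ell(Q))^\kappa$ (where the off-diagonal decay $(1+3^j\ell(Q)/\tau^{1/\kappa})^{-n-\epsilon}$ is strong) and $\tau \ge (3^j\ell(Q))^\kappa$ (where integrability at infinity is secured by the range assumption $\alpha < n(1/p_0 - 1/q_0)$) separately.

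The main obstacle is that the $\tau$-range $[\ell(Q)^\kappa, \infty)$ includes the regime where the factor $(1 + \tau^{1/\kappa}/\ell(Q))^{n/q_0}$ in \eqref{eq2-Tt} is unbounded, in sharp contrast to Lemma \ref{lem2}, where $\tau \le \ell(Q)^\kappa$ keeps this factor harmless. A naive application of \eqref{eq2-Tt} term by term produces an estimate of order $3^{j(\alpha + n/q_0)}\ell(Q)^\alpha (\fint_{S_j(Q)} |f|^{p_0})^{1/p_0}$ on each annulus, which is not summable in $j$ and thus cannot be absorbed into $\mathcal M_{\alpha, p_0} f(x)$ pointwise. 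My plan for surmounting this is to view $P_{N,\ell(Q)^\kappa}(L) L^{-\alpha/\kappa}$ as a single functional-calculus operator and to derive for it a sharper annular estimate than what results from composing the two factors separately — exploiting the identity $L^{-\alpha/\kappa}(\tau L)^N e^{-\tau L} = \tau^{\alpha/\kappa}(\tau L)^{N-\alpha/\kappa}e^{-\tau L}$ together with (A2) to transfer $\alpha/\kappa$ powers of $\tau$ into the decay factor, thereby shaving $3^{j(\alpha + n/q_0)}$ down to $3^{-j\epsilon}$ under a suitable choice $N > \kappa(\alpha + n/q_0 + \epsilon)$ parallel to Lemma \ref{lem2}. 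Once this yields the pointwise control $T^\sharp_L f(x) \lesssim \mathcal M_{\alpha, p_0} f(x)$, the fractional Hardy-Littlewood-Sobolev weak-type $(p_0, p_0(\alpha))$ inequality for $\mathcal M_{\alpha, p_0}$ delivers the asserted boundedness.
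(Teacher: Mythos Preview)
Your local part via Lemma~\ref{lem1} is fine, and you correctly identify the obstacle in the global part: once $\tau\gg\ell(Q)^\kappa$, the factor $(1+\tau^{1/\kappa}/\ell(Q))^{n/q_0}$ in \eqref{eq2-Tt} blows up and a naive annular estimate produces the non-summable $3^{j(\alpha+n/q_0)}$. However, your proposed fix does not work. Assumption (A2) is only stated for \emph{integer} $k$, so $(\tau L)^{N-\alpha/\kappa}e^{-\tau L}$ is not covered; more importantly, even if (A2) did extend to non-integer powers, the form of the bound in \eqref{eq2-Tt} is independent of $k$ (only the implicit constant depends on $k$), so rewriting $(\tau L)^N e^{-\tau L}$ as $\tau^{-\alpha/\kappa}\cdot\tau^{\alpha/\kappa}(\tau L)^{N-\alpha/\kappa}e^{-\tau L}$ gains nothing --- the off-diagonal decay $(1+3^j\ell(Q)/\tau^{1/\kappa})^{-n-\epsilon}$ stays the same, and the growing factor $(1+\tau^{1/\kappa}/\ell(Q))^{n/q_0}$ is untouched. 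There is no mechanism in the hypotheses for ``transferring $\alpha/\kappa$ powers of $\tau$ into the decay factor''. In short, the pointwise bound $T^\sharp_L f\lesssim \mathcal M_{\alpha,p_0}f$ that you are aiming for is not reachable by this route.

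The paper sidesteps the whole difficulty by proving a \emph{different} pointwise inequality,
\[
T^\sharp_L f(x)\ \lesssim\ \mathcal M_{p_0}\big(L^{-\alpha/\kappa}f\big)(x),
\]
and then showing directly that $\mathcal M_{p_0}\circ L^{-\alpha/\kappa}$ is weak $(p_0,p_0(\alpha))$. The pointwise step is almost immediate: $P_{N,t}(L)=p(tL)e^{-tL}$ with $p$ a polynomial, so $P_{N,t}(L)$ itself satisfies (A2); applying \eqref{eq2-Tt} to $P_{N,\ell(Q)^\kappa}(L)$ with $t=\ell(Q)^\kappa$ keeps the dangerous factor $(1+t^{1/\kappa}/\ell(Q))^{n/q_0}$ equal to a harmless constant, and the annular sum over $j$ converges with rate $3^{-j\epsilon}$, giving $\big[\fint_Q|P_{N,\ell(Q)^\kappa}(L)g|^{q_0}\big]^{1/q_0}\lesssim \mathcal M_{p_0}g(x)$ for $g=L^{-\alpha/\kappa}f$. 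The weak-type bound for $\mathcal M_{p_0}(L^{-\alpha/\kappa}f)$ is then obtained by exactly the same splitting $L^{-\alpha/\kappa}f=f_A+f^A$ and Chebyshev argument as in Proposition~\ref{prop 1}, using that $\mathcal M_{p_0}$ is bounded on $L^{q_1}$ and $L^{q_2}$ for any $p_0<q_1<p_0(\alpha)<q_2<q_0$. The key conceptual point you are missing is that one should not split $f$ before applying $P_{N,\ell(Q)^\kappa}(L)$; rather, absorb $L^{-\alpha/\kappa}f$ as a whole into the maximal function and handle the fractional smoothing at the level of the weak-type estimate.
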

\begin{proof}
	
	Since $P_{N,t}(L)=p(tL)e^{-tL}$ where $p$ is a polynomial of degree $N-1$ and $p(0)=1$, $P_{N,t}(L)$ satisfies (A1) and (A2). Consequently, for $x\in \mathbb R^n$,
	\[
	T_L^\sharp f(x) \lesssim \mathcal M_{p_0}(L^{-\alpha/\kappa}f)(x).
	\]
	Therefore, it suffices to show that 
	\[
	|\{x: |\mathcal M_{p_0}(L^{-\alpha/\kappa}f)(x)|>\lambda \}|\lesssim \lambda^{-p_0(\alpha)}\|f\|^{p_0(\alpha)}_{p_0}
	\]
	for all $\lambda>0$, where $p_0(\alpha):= \frac{p_0n}{n-\alpha p_0}$.	
	
	This is similar to the proof of Proposition \ref{prop 1}. Indeed, since $p_0<p_0(\alpha) <q_0$, we can choose $q_1, q_2$ such that  $p_0<q_1<p_0(\alpha)<q_2<q_0$. Then we write
	$$
	\begin{aligned}
		L^{-\alpha/\kappa}f &= c\int_0^\infty t^{\alpha/\kappa} e^{-tL}f\frac{\ud t}{t}\\
		&=c\int_0^A t^{\alpha/\kappa} e^{-tL}f\frac{\ud t}{t}+c\int_A^\infty t^{\alpha/\kappa} e^{-tL}f\frac{\ud t}{t}\\
		&=:f_A+f^A,
	\end{aligned}
	$$
	where $A$ will be fixed later. 
	
	This, together with Chebyshev's inequality, implies
	\[
	\begin{aligned}
		|\{x: |\mathcal M_{p_0}(L^{-\alpha/\kappa}f)(x)|>\lambda \}|&\le |\{x: |\mathcal M_{p_0}f_A(x)|>\lambda/2 \}|+|\{x: |\mathcal M_{p_0}f^A(x)|>\lambda/2 \}|\\
		&\lesssim \lambda^{-q_1}\|\mathcal M_{p_0}f_A\|^{q_1}_{q_1} + \lambda^{-q_2}\|\mathcal M_{p_0}f^A\|^{q_2}_{q_2}.
	\end{aligned}
	\]
	Since $\mathcal M_{p_0}$ is bounded on $L^p, p>p_0$, we further obtain
	\[
	\begin{aligned}
		|\{x: &|\mathcal M_{p_0}(L^{-\alpha/\kappa}f)(x)|>\lambda \}| \\
		&\lesssim \lambda^{-q_1}\|f_A\|^{q_1}_{q_1} + \lambda^{-q_2}\|f^A\|^{q_2}_{q_2}\\
		&\lesssim \lambda^{-q_1}\Big[\int_0^A t^{\alpha/\kappa} \|e^{-tL}f\|_{q_1}\frac{\ud t}{t}\Big]^{q_1} + \lambda^{-q_2}\Big[\int_A^\infty t^{\alpha/\kappa} \|e^{-tL}f\|_{q_2}\frac{\ud t}{t}\Big]^{q_2}\\
	\end{aligned} 	
	\]
	Using \eqref{Lpq estimate},
	\[
	\begin{aligned}
		|\{x: &|\mathcal M_{p_0}(L^{-\alpha/\kappa}f)(x)|>\lambda \}|\\
		&\lesssim \lambda^{-q_1}\|f\|_{p_0}^{q_1}\Big[\int_0^A t^{\alpha/\kappa} t^{\frac{n}{\kappa}(\frac{1}{q_1}-\frac{1}{p_0})}\frac{\ud t}{t}\Big]^{q_1} + \lambda^{-q_2}\|f\|_{p_0}^{q_2}\Big[\int_A^\infty t^{\alpha/\kappa} t^{\frac{n}{\kappa}(\frac{1}{q_2}-\frac{1}{p_0})}\frac{\ud t}{t}\Big]^{q_2}\\
		&\lesssim \lambda^{-q_1}\|f\|_{p_0}^{q_1}\Big[\int_0^A   t^{\frac{n}{\kappa}(\frac{1}{q_1}-\frac{1}{p_0(\alpha)})}\frac{\ud t}{t}\Big]^{q_1} + \lambda^{-q_2}\|f\|_{p_0}^{q_2}\Big[\int_A^\infty   t^{\frac{n}{\kappa}(\frac{1}{q_2}-\frac{1}{p_0(\alpha)})}\frac{\ud t}{t}\Big]^{q_2}\\
		&\lesssim \lambda^{-q_1}\|f\|_{p_0}^{q_1}A^{\frac{n}{\kappa}(1-\frac{q_1}{p_0(\alpha)})} +\lambda^{-q_2}\|f\|_{p_0}^{q_2}A^{\frac{n}{\kappa}(1-\frac{q_2}{p_0(\alpha)})}.
	\end{aligned} 	
	\]
	Taking $A= \lambda^{-\frac{\kappa p_0(\alpha)}{n}}\|f\|_{p_0}^{\frac{\kappa p_0(\alpha)}{n}}$,
	\[
	|\{x: |\mathcal M_{p_0}(L^{-\alpha/\kappa}f)(x)|>\lambda \}|\lesssim \lambda^{-p_0(\alpha)}\|f\|_{p_0}^{p_0(\alpha)}.
	\]
	
	This completes our proof.
\end{proof}
We now ready to give the proof of Theorem \ref{thm- weak type for truncated operator}.
\begin{proof}[Proof of Theorem \ref{thm- weak type for truncated operator}:] We proved in Proposition \ref{prop 1} that  $L^{-\alpha/\kappa}$ is bounded from $L^{p_0}(\mathbb R^n)$ to $L^{p_0(\alpha),\infty}(\mathbb R^n)$. It remains to take care of  the operator $\mathcal{M}_{L^{-\alpha/\kappa},q_0}$.

To do this, we fix $N>\kappa(\alpha+n/q_0+\epsilon)$. For every $t>0$, recall from \eqref{defn Q and P} that  
	\begin{equation*}
		Q_{N,t}(L) = c_N (tL)^Ne^{-tL}, \ \ \ \text{and} \ \ \ P_{N,t}(L) = \int_1^\infty Q_{N,st}(L)\frac{\ud s}{s}=\int_t^\infty Q_{N,s}(L)\frac{\ud s}{s}
	\end{equation*}
	where $\displaystyle c_N = \Big(\int_0^\infty s^N e^{-s}\frac{\ud s}{s}\Big)^{-1}=\frac{1}{(N-1)!}$.
	
	Fix $x\in \mathbb R^n$ and let $Q$ be a cube containing $x$. Then, we have
	\begin{equation}
		\label{eq-MT main thm 1}
		\begin{aligned}
			\Big[\fint_Q &|L^{-\alpha/\kappa}(f\chi_{\mathbb R^n\backslash 3Q})(y)|^{q_0}\ud y\Big]^{1/q_0}\\
			&\lesssim \Big[\fint_Q |L^{-\alpha/\kappa}P_{N, \ell(Q)^\kappa }(L)(f\chi_{\mathbb R^n\backslash 3Q})(y)|^{q_0}\ud y\Big]^{1/q_0}\\
			& \ \ +\Big[\fint_Q |L^{-\alpha/\kappa}(I-P_{N, \ell(Q)^\kappa }(L))(f\chi_{\mathbb R^n\backslash 3Q})(y)|^{q_0}\ud y\Big]^{1/q_0}\\
			&\lesssim \Big[\fint_Q |L^{-\alpha/\kappa}P_{N, \ell(Q)^\kappa }(L)f(y)|^{q_0}\ud y\Big]^{1/q_0}+\Big[\fint_Q |L^{-\alpha/\kappa}P_{N, \ell(Q)^\kappa }(L)(f\chi_{3Q})(y)|^{q_0}\ud y\Big]^{1/q_0}\\
			&\ \ \ +\Big[\fint_Q |L^{-\alpha/\kappa}(I-P_{N, \ell(Q)^\kappa }(L))(f\chi_{\mathbb R^n\backslash 3Q})(y)|^{q_0}\ud y\Big]^{1/q_0}.
		\end{aligned}
	\end{equation}
	
	By Lemmas \ref{lem1}, \ref{lem2} and \eqref{eq- Tsharp L}, we have
	\[
	\sup_{Q\ni x}\Big[\fint_Q |L^{-\alpha/\kappa}(f\chi_{\mathbb R^n\backslash 3Q})(y)|^{q_0}\ud y\Big]^{1/q_0}\lesi \mathcal M_{\alpha,p_0}f(x) + T_L^\sharp f(x).
	\]
	The conclusion follows directly from the facts that $\mathcal M_{\alpha,p_0}$ is bounded from $L^{p_0}(\mathbb R^n)$ to $L^{p_0(\alpha),\infty}(\mathbb R^n)$ and $T^\sharp_L$ is bounded from $L^{p_0}(\mathbb R^n)$ to $L^{p_0(\alpha),\infty}(\mathbb R^n)$ (see Lemma \ref{lem3}.)

	This completes our proof.
\end{proof}

\bigskip

\textbf{Acknowledgment.} T. A. Bui was supported by the Australian Research Council via the grant ARC DP220100285. L. Zheng was supported by the National Key R\&D Program of China (Grant No. 2021YFA1002500). The authors would like to thank Kangwei Li for his useful discussion and suggestions.


\end{document}